\def\b{\mathbb }
\theoremstyle{plain}
\newtheorem{theorem}{Theorem}[section]
\newtheorem{corollary}[theorem]{Corollary}
\newtheorem{lemma}[theorem]{Lemma}
\newtheorem{proposition}[theorem]{Proposition}
\theoremstyle{definition}
\newtheorem{definition}[theorem]{Definition}
\newtheorem{remark}[theorem]{Remark}
\theoremstyle{remark}
\newtheorem*{remarks}{Remarks}
\numberwithin{equation}{section}
\newcommand{\R}{\mathbb{R}}
\begin{document}

\title[Limit theorems of freezing matrix models via dual polynomials]{Limit theorems and soft edge of freezing random matrix models via dual orthogonal polynomials}

\author{Sergio Andraus}
\address{Department of Physics, Graduate School of Science, The University of Tokyo, Hongo 7-3-1,
Bunkyo-Ku, Tokyo 113-0033, Japan}
\email{sergio.andraus@phys.s.u-tokyo.ac.jp}
\author{Kilian Hermann}
\author{Michael Voit}
\address{Fakult\"at Mathematik, Technische Universit\"at Dortmund,
          Vogelpothsweg 87,
          D-44221 Dortmund, Germany}
\email{kilian.hermann@math.tu-dortmund.de, michael.voit@math.tu-dortmund.de}

\keywords{
 Calogero-Moser-Sutherland models, $\beta$-Hermite  ensembles, $\beta$-Laguerre ensembles,
$\beta$-Jacobi ensembels, covariance matrices,
 zeros of classical orthogonal polynomials, dual orthogonal poynomials, Airy function}

\subjclass[2010]{Primary 60F05; Secondary   60B20, 70F10, 82C22, 33C45, 33C10, 60J60
 }

\date{\today}

\begin{abstract}
$N$-dimensional Bessel and Jacobi processes  describe
 interacting particle systems with $N$ particles  and are related to
 $\beta$-Hermite, $\beta$-Laguerre, and $\beta$-Jacobi ensembles.
For fixed  $N$ there exist associated weak limit theorems (WLTs) in the freezing regime
$\beta\to\infty$ in the  $\beta$-Hermite and $\beta$-Laguerre case by 
Dumitriu and  Edelman (2005) with explicit formulas for the covariance matrices
$\Sigma_N$ in terms of the zeros of associated orthogonal polynomials. Recently,
the authors derived these WLTs
in a different way
and computed  $\Sigma_N^{-1}$ with formulas for the  eigenvalues and eigenvectors of
$\Sigma_N^{-1}$ and thus of $\Sigma_N$.
In the present paper we use these data and the theory of finite dual orthogonal 
polynomials of de Boor and  Saff to derive formulas for  $\Sigma_N$ from  
$\Sigma_N^{-1}$
where, for $\beta$-Hermite and  $\beta$-Laguerre ensembles, our formulas are simpler 
than those of  Dumitriu and  Edelman. We  use  these
 polynomials to derive asymptotic results for  
the soft edge in the freezing regime for
 $N\to\infty$ in terms of the Airy function.
 For  $\beta$-Hermite ensembles, our limit expressions are different from 
 those of Dumitriu and  Edelman.
\end{abstract}

\maketitle

\section{Introduction} 

Interacting Calogero-Moser-Sutherland particle systems  
 on $\mathbb R$ or  $[0,\infty[$  with $N$ particles 
 are described via multivariate 
Bessel processes  on closed
Weyl chambers  in $\mathbb R^N$. These have been widely studied in the mathematical and physical literature, in particular due to their connections to random matrix theory; see \cite{Dy,Br,KO, F} for these connections and the monographs \cite{D, Me} for the  background
on random matrices.
These Bessel processes are classified via root systems and by coupling or multiplicity parameters $k$
which govern the interactions; see  \cite{CDGRVY, R1, RV1, DF, DV} and references therein for the details.
Moreover, similar  systems on  $[-1,1]$ can be described via  Jacobi processes
on alcoves in $\mathbb R^N$ which have the distributions of $\beta$-Jacobi ensembles as invariant distributions; see \cite{Dem, RR, V2}.

Recently, several limit theorems were derived   when one or several 
 multiplicity parameters $k$ tend to infinity;
see  \cite{AKM1, AKM2, AV1, AV2, HV, V, VW}. In particular,  \cite{AV1, AV2, V, VW}
contain 
central limit theorems for  Bessel processes for $k\to\infty$, and  \cite{HV} contains a corresponding result for $\beta$-Jacobi ensembles.
In the most interesting cases,
the freezing limits are  $N$-dimensional centered Gaussian distributions 
where the inverses $\Sigma_N^{-1}$
of the covariance matrices
$\Sigma_N$ can be computed explicitly in terms of the zeros of classical
orthogonal polynomial of order $N$.
In particular,
for the Bessel processes with the root systems $A_{N-1}$ and $B_N$,
these orthogonal polynomials are classical Hermite and Laguerre polynomials, and the associated freezing WLTs
for the Bessel processes with start in the origin are closely related with corresponding 
WLTs of Dumitriu and  Edelman \cite{DE2} for  $\beta$-Hermite and  $\beta$-Laguerre
ensembles respectively for $\beta\to\infty$.
However, the statements of these WLTs in \cite{DE2}  and \cite{V, AV1} are slightly different, as in 
\cite{DE2} explicit formulas for the covariance matrices $\Sigma_N$
are given instead of the inverse matrices $\Sigma_N^{-1}$ in \cite{V, AV1}.
Both types of formulas involve the zeros of the $N$-th Hermite or Laguerre polynomial (with a suitable parameter)
respectively, but it seems to be 
difficult  to verify that the approaches in \cite{DE2}  and \cite{V, AV1} are equivalent.
In fact, for small dimensions $N$, this equivalence was verified numerically.
The different formulas in \cite{DE2}  and \cite{V, AV1} were one of the starting points for this paper.
In fact, by \cite{AV1}, the matrices $\Sigma_N^{-1}$ and thus the $\Sigma_N$ can be diagonalized: 
$\Sigma_N^{-1}$ has the eigenvalues $1,2,\ldots,N$ in the $A_{N-1}$-case and $2,4,\ldots,2N$ in the $B_N$-case,
and in both cases the transformation matrices can be described in terms of a finite sequence $(Q_k)_{k=0,\ldots,N-1}$
of orthogonal polynomials which are orthogonal w.r.t.~the empirical
measure of the zeros of the $N$-th associated Hermite or Laguerre polynomial respctively. We show
that this diagonalization of  $\Sigma_N^{-1}$ also leads to an explicit three-term-recurrence relation for the sequence
$(Q_k)_{k=0,\ldots,N-1}$; see the end of Section 2 in the $\beta$-Hermite case.
This recurrence immediately shows that the sequences  $(Q_k)_{k=0,\ldots,N-1}$ are dual in the sense of de Boor and Saff
\cite{BS}
to the finite parts
$(H_k)_{k=0,\ldots,N-1}$ and $(L_k^{(\alpha)})_{k=0,\ldots,N-1}$ of the associated Hermite and Laguerre polynomials respectivey.
With this knowledge in mind we reprove this fact in a more elegant way in Section 4 via this duality theory;
see also \cite{VZ, I} for details. It turns out that this approach also works for the freezing limits
of the $\beta$-Jacobi ensembles in \cite{HV}  where Jacobi polynomials and their zeros appear in a similar way as for the
$\beta$-Hermite and $\beta$-Laguerre ensembles.

After having identified the polynomials  $(Q_k)_{k=0,\ldots,N-1}$ as dual polynomials in all these 3 classical matrix ensembles,
we determine new explicit formulas for the entries of  $\Sigma_N$ in Section 4. It turns out that our
approach to 
$\Sigma_N$ for the $\beta$-Hermite and $\beta$-Laguerre ensembles in the freezing limit leads to formulas
 different from \cite{DE2}, and  we are not able to
 check equality of these formulas for arbitrary dimensions $N$.
 On the other hand, our formulas in the $\beta$-Hermite limit were derived recently by completely different methods by Gorin and  Kleptsyn \cite{GK}; see also \cite{GoM} for related results. 
 The equality of $\Sigma_N$ here (and in  \cite{GK} in the $\beta$-Hermite case) and in \cite{DE2} lead to
  some unknown  connections between the
  zeros of the $N$-th Hermite or Laguerre polynomial and the corresponding polynomials of order $0,1,2,\ldots,N-1$;
  see Corollary \ref{equality-of-variances-h} below for the details in the Hermite case.
We  point out that in the  $\beta$-Hermite limit case, our 
formulas for the entries of  $\Sigma_N$ have the same complexity as those in \cite{DE2}, while in the Laguerre case our
formulas have the same form as in the  $\beta$-Hermite case while  the formulas in  \cite{DE2} are
considerably more complicated. Moreover, in the  $\beta$-Jacobi case, our formulas  for  $\Sigma_N$ also have the same structure
while corresponding results based on  tridiagonal random matrix models as in \cite{K, KN}
seem to be unknown.

In the remaining sections we use our formulas for  $\Sigma_N=(\sigma_{i,j})_{i,j=1,\ldots,N}$ in order to derive limit results
for $\sigma_{N,N}$ for $N\to\infty$ in the $\beta$-Hermite and $\beta$-Laguerre case which involves the Airy function 
$\mathsf {Ai}$ and the $r$ largest zeros $a_r<a_{r-1}<\ldots< a_1<0$ of $\mathsf {Ai}$. For a discussion of $\mathsf {Ai}$ we refer to 
\cite{NIST, VS}.
In particular, for the largest eigenvalues in the $\beta$-Hermite case we obtain the following theorem which summarizes the main
results of  Section 5. For the precise definition of the  Bessel processes 
$(X_{t,k}^N)_{t\geq 0}$ of type $A_{N-1}$ we refer to the beginning of Section 2 below.

\begin{theorem}\label{localconvfnrth-introduction}
  Let $r\in\mathbb{N}$. For $N\ge r$ 
consider  the  Bessel processes 
$$(X_{t,k}^N)_{t\geq 0}=(X_{t,k,1}^N,\ldots,X_{t,k,N}^N )_{t\ge0}$$ of type $A_{N-1}$  with start in $0\in \mathbb R^N$.
Then, for each $t>0$, 
\begin{equation}\label{CLT-max-limit-main-Hermite-intro}
        \lim_{N\to\infty}\left(\lim_{k\to\infty}N^\frac{1}{6}\sqrt{2k}\left(\frac{X_{t,k,N-r+1}^N}{\sqrt{2kt}}-z_{N-r+1,N}\right)\right)=
G_r
        \end{equation}
in distribution with some $\mathcal{N}(0,\sigma_{max, r}^2)$-distributed random variable $G_r$  with variance 
	\begin{align*}
	\sigma_{max, r}^2=
	\int_{0}^{\infty}\frac{\mathsf {Ai}(x+a_r)^2}{\mathsf {Ai}'(a_r)^2x}dx =
	\begin{cases}
	0.582\ldots\text{for }r=2\\
		0.472\ldots\text{for }r=3\\
			0.407\ldots\text{for }r=4\\
			\ldots.
	\end{cases}
	\end{align*}
where $z_{N-r+1,N}$ is the $r$-th largest zero of the classical Hermite polynomial $H_N$, which satisfies by some classical formula of Plancherel-Rotach 
(see e.g.~\cite{T})
\begin{equation}\label{Planch-rot-r-intro}
\frac{z_{N-r+1,N}}{\sqrt{2N}}=1-\frac{|a_r|}{2N^\frac{2}{3}}+O(N^{-1}) \quad\quad(N\to\infty).
\end{equation}
Moreover, the variances $\sigma_{max, r}^2$ tend to $0$ for $r\to\infty$.
\end{theorem}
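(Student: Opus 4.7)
The plan is to reduce the iterated limit to a deterministic soft-edge asymptotic and then apply Plancherel-Rotach in tandem with the dual polynomial formula for $\Sigma_N$ from Section~4. For the inner limit, the freezing CLT of \cite{AV1} gives, for fixed $N$,
\begin{equation*}
\lim_{k\to\infty}\sqrt{2k}\Bigl(\tfrac{X_{t,k}^N}{\sqrt{2kt}}-(z_{1,N},\ldots,z_{N,N})\Bigr)\;\stackrel{d}{=}\;\mathcal{N}(0,\Sigma_N),
\end{equation*}
so the inner limit in \eqref{CLT-max-limit-main-Hermite-intro} is an $\mathcal{N}\bigl(0,\,N^{1/3}(\Sigma_N)_{N-r+1,N-r+1}\bigr)$ variable. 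The problem therefore reduces to proving
\begin{equation*}
\lim_{N\to\infty}N^{1/3}(\Sigma_N)_{N-r+1,N-r+1}=\int_{0}^{\infty}\frac{\mathsf{Ai}(x+a_r)^2}{\mathsf{Ai}'(a_r)^2\,x}\intd x.
\end{equation*}

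To handle the right-hand side, I use the spectral representation of $\Sigma_N$ coming from Section~4: the eigenvalues of $\Sigma_N^{-1}$ are $1,2,\ldots,N$, and the corresponding orthonormal eigenvectors are built from the normalised dual polynomials $\widehat Q_{\ell-1}$ evaluated at the zeros of $H_N$, giving a sum of $N$ terms weighted by $1/\ell$. Into each term I feed the soft-edge asymptotics: the Plancherel-Rotach formula \eqref{Planch-rot-r-intro} rescales the top zeros onto the Airy zeros $a_s$; $H_N$ itself converges, under the same $N^{1/6}$-scaling, to $\mathsf{Ai}$; and via the de~Boor-Saff duality the values $\widehat Q_{\ell-1}(z_{N-r+1,N})^2$ transform into ratios of Airy values at $x+a_r$. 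The $1/\ell$ weight, after the natural change of variable to the edge coordinate $x$, produces both the $\intd x$ element and the $1/x$ factor of the integrand, so that the sum becomes a Riemann sum converging to the claimed integral.

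Two technical items remain. First, contributions from bulk indices $\ell$ outside an $N^{1/3}$-window near $N-r+1$ must be shown to be $o(N^{-1/3})$; this uses standard polynomial bounds together with the unit-sum normalisation $\sum_{\ell}\widehat Q_{\ell-1}(z_{j,N})^2=1$. Second, the variance decay $\sigma_{max,r}^2\to 0$ as $r\to\infty$ follows from the classical asymptotics $a_r\sim-(3\pi(4r-1)/8)^{2/3}$ and $\mathsf{Ai}'(a_r)^2\sim |a_r|^{1/2}/\pi$, by splitting the integral at $x=|a_r|/2$, bounding the near part by the Taylor expansion of $\mathsf{Ai}$ at $a_r$ and the far part by the exponential decay of $\mathsf{Ai}$, then collecting powers of $|a_r|$.

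The main obstacle will be the uniform Plancherel-Rotach type expansion of the dual polynomials $Q_k$ throughout the soft-edge window. The de~Boor-Saff duality is an algebraic identity; upgrading it to analytic asymptotics that are uniform both in the polynomial index $k$ and in the evaluation point $z_{N-r+1,N}$ is what makes the Riemann-sum step nontrivial. The cleanest route seems to be to rewrite $\widehat Q_{\ell-1}(z_{j,N})$ via a Christoffel-Darboux summation against $H_N$, so that the soft-edge limit reduces to the well-understood Airy scaling of the Hermite reproducing kernel.
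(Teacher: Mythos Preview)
Your reduction of the double limit to the deterministic statement $N^{1/3}\sigma_{N-r+1,N-r+1}\to\int_0^\infty \mathsf{Ai}(x+a_r)^2/(\mathsf{Ai}'(a_r)^2 x)\,dx$, via the freezing CLT and the spectral representation $\sigma_{j,j}=\sum_{k}k^{-1}Q_{k-1}(z_j)^2$, matches the paper exactly. The divergence is in how you propose to prove the asymptotics of $Q_{\ell-1}(z_{N-r+1,N})$.

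You want to invoke the duality identity $\tilde Q_{k,N}(z_i)=\tilde H_{N-1-k}(z_i)/\tilde H_{N-1}(z_i)$ and then appeal to Plancherel--Rotach asymptotics of the Hermite polynomials, possibly packaged through the Airy limit of the Christoffel--Darboux kernel. This is plausible in spirit, but the Christoffel--Darboux suggestion does not fit: the Airy-kernel limit controls $\sum_{m<N}\tilde H_m(x)\tilde H_m(y)$ with both arguments at the edge, whereas here you need $\tilde H_{N-\ell}(z_{N-r+1,N})$ individually, uniformly over $\ell$ in a window of width $N^{1/3}$, with the evaluation point fixed at a zero of $H_N$ (not of $H_{N-\ell}$). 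Turning that into a uniform statement is exactly the hard part, and your proposal does not say how to do it.

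The paper avoids this issue entirely. It never uses Plancherel--Rotach for the Hermite polynomials (only for the location of the zero $z_{N-r+1,N}$). Instead it exploits that the dual recurrence is explicit,
\[
xQ_{k,N}=\sqrt{\tfrac{N-k-1}{2}}\,Q_{k+1,N}+\sqrt{\tfrac{N-k}{2}}\,Q_{k-1,N},
\]
rescales $q_k=Q_{k,N}(z_{N-r+1,N})$ by setting $f_N(y)=N^{1/6}q_{\lfloor N^{1/3}y\rfloor}$, Taylor-expands the coefficients, and sums the recurrence twice to obtain an integral equation
\[
f_N(y)=\int_0^y\!\!\int_0^s(t-|a_r|)f_N(t)\,dt\,ds+y+O(N^{-1/3}),
\]
which is a discrete version of the Airy ODE. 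Gronwall then gives $f_N\to \mathsf{Ai}(\cdot+a_r)/\mathsf{Ai}'(a_r)$ locally uniformly. This is self-contained and delivers the needed uniformity automatically. For the passage from the sum to the integral the paper also does something cleaner than a bulk/edge split: it uses $\int f_N^2=\int f^2=1$ to upgrade local uniform convergence to weak convergence of the probability measures $f_N^2\,dy$, which handles the tail for free.

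For the decay $\sigma_{max,r}^2\to 0$, your two-region split with a Taylor bound near $x=0$ will not work over the whole interval $[0,|a_r|/2]$, because $\mathsf{Ai}(\cdot+a_r)$ is oscillatory there, not approximately linear. The paper instead splits $\int_{a_r}^\infty$ into $[a_r,a_{r-1}]$, $[a_{r-1},a_1]$, $[a_1,\infty)$, using the oscillatory asymptotics \eqref{airy-minus-inf} on the first piece, a crude $|\mathsf{Ai}|\le 1$ plus a logarithmic bound on the second, and the $r=1$ value on the third; combined with $\mathsf{Ai}'(a_r)^{-2}\asymp r^{-1/3}$ this gives $\sigma_{max,r}^2\le C r^{-1/3}\log r$.
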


For $r=1$, this result  was stated by Dumitriu and Edelman (Corollary 3.4 in \cite{DE2}), where
the result there contains a misprint and the proof is sketched only. Moreover, as the proof in  \cite{DE2} uses
a different formula for $\sigma_{N,N}$, they obtain 
\begin{equation}\label{variance-DE-intro}
\sigma_{max, 1}^2
        =2\frac{\int_{0}^{\infty}\mathsf {Ai}^4(x+a_1)dx}{\left(\int_{0}^{\infty}\mathsf
{Ai}^2(x+a_1)dx\right)^2}=2\int_{0}^{\infty}\left(\frac{\mathsf
{Ai}(x+a_1)}{\mathsf {Ai}'(a_1)}\right)^4dx.
        \end{equation}
A numerical computation shows that the value of (\ref{variance-DE-intro}) seems to be equal to that in Theorem
\ref{localconvfnrth-introduction} for $r=1$.
Unfortunately, we are not able to verify this equality in an analytic way, as our suggested identity does not seem to fit to known
identities for the Airy function  as, for example, in \cite{VS}. 
Besides this result for the largest eigenvalues in the $\beta$-Hermite case we  also derive a corresponding result for the largest
eigenvalues of the frozen Laguerre ensembles by the same methods.
Our approach also leads to a corresponding result for the smallest eigenvalues of frozen Laguerre ensembles, namely, at the hard edge (see \cite{A} for a derivation), and we expect that it will apply to the extremal eigenvalues of frozen Jacobi ensembles where then  Bessel functions instead of the Airy function appear.

This paper is organized as follows: In Section 2 we recapitulate some facts on Bessel processes
of type $A_{N-1}$ and $\beta$-Hermite ensembles. In particular the WLTs in the freezing limit from \cite{DE2, V, AV2}
and the covariance matrices   $\Sigma_N$ and their inverses are discussed there. Moreover we derive the 
 three-term-recurrence relation for 
 $(Q_k)_{k=0,\ldots,N-1}$ there via matrix analysis.
Section 3 is then devoted to the corresponding known results for the   $\beta$-Laguerre and  $\beta$-Jacobi ensembles
from \cite{DE2, V, AV2, HV}.
Then, in Section 4 we discuss general dual finite orthogonal polynomials and apply this to the classical polynomials.
In this way we obtain new formulas for the covariance matrices $\Sigma_N$ for all 3 classical types of
ensembles in a unifying way. These results are applied in Section 5 for the Hermite cases,
in order to determine of some entries of
$\Sigma_N$ for $N\to\infty$ in terms of Airy functions.
Finally, in Section 6, the corresponding limits in the Laguerre cases
are determined at the soft edge.

\section{WLTs for Hermite ensembles for $\beta\to\infty$}

In this section we  recapitulate some  WLTs for  the  root systems $A_{N-1}$ for
fixed $N\ge2$ and
 $\beta\to\infty$ from \cite{DE2,  AV2, V} where we add a new result in the end.
Here we have a one-dimensional coupling constant $\beta=2k\in[0,\infty[$ where the notation $k$ is usually used in the Bessel process community and  $\beta$ in  the random matrix community.
 The associated Bessel processes $(X_{t,k})_{t\ge0}$ are Markov processes
 on the closed Weyl chamber
$$C_N^A:=\{x\in \mathbb R^N: \quad x_1\le x_2\le\ldots\le x_N\}$$
 where the generator of the transition semigroup is 
\begin{equation}\label{def-L-A} L_Af:= \frac{1}{2} \Delta f +
 k \sum_{i=1}^N\Bigl( \sum_{j\ne i} \frac{1}{x_i-x_j}\Bigr) \frac{\partial}{\partial
x_i}f ,
 \end{equation}
and we assume reflecting boundaries.  The transition probabilities of these
processes for $t>0$
can be expressed in terms of multivariate Bessel functions of type  $A_{N-1}$; see  
\cite{R1, RV1}.
Here we only recapitulate  that under the starting condition $X_{0,k}=0\in C_N^A $,
the random variable  $X_{t,k}$ has the  Lebesgue-density
\begin{equation}\label{density-A-0}
 \frac{c_k}{t^{\gamma_A+N/2}} e^{-\|y\|^2/(2t)} \cdot \prod_{i<j}(y_j-y_i)^{2k}\> dy
\end{equation}
on $C_N^A$ for $t>0$ with the constants
$$\gamma_A:= kN(N-1)/2,\quad\quad  c_k^A:=\frac{N!}{(2\pi)^{N/2}}
\cdot\prod_{j=1}^{N}\frac{\Gamma(1+k)}{\Gamma(1+jk)}.$$
 Up to scaling, this is simply the distribution of the ordered spectra
of the  $\beta$-Hermite ensembles of Dumitriu and Edelman
\cite{DE1}. Using this  tridiagonal $\beta$-Hermite model, Dumitriu and Edelman
(Theorem 3.1 of \cite{DE2}) 
 derived the following WLT \ref{clt-main-a-DE} for $\beta=k\to\infty$ 
where the data of the limits are given in terms of the ordered zeros
 $z_{1,N}<\ldots< z_{N,N}$ of the $N$-th Hermite polynomial $H_N$. 
For this we recall that, as usual (see e.g.~\cite{S}),
the Hermite  polynomials $(H_n)_{n\ge 0}$ are orthogonal w.r.t.
 the density  $e^{-x^2}$ with the three-term-recurrence relation
\begin{equation}\label{hermite-classical}
H_0=1, \quad H_1(x)=x, \quad H_{n+1}(x)=2x H_n(x)-2nH_{n-1}(x) \quad(n\ge1).
\end{equation}
The  Hermite polynomials, orthonormalized w.r.t.~the probability measure
$\pi^{-1/2}e^{-x^2}$, will be denoted by
$(\tilde H_n)_{n\ge0}$. By (5.5.1) of \cite{S}, we thus have
\begin{equation}\label{hermite-orthonormal}
 \tilde H_n(x)=\frac{1}{2^{n/2}\sqrt{n!}} H_n(x) \quad\quad(n\ge0).
\end{equation}

\begin{theorem}\label{clt-main-a-DE}
Consider the  Bessel processes $(X_{t,k})_{t\ge0}$ of type $A_{N-1}$  with start in
$0\in C_N^A$.
Then, for each $t>0$,
$$\frac{X_{t,k}}{\sqrt t} -  \sqrt{2k}\cdot (z_{1,N},\ldots,z_{N,N})$$
converges for $k\to\infty$ to the centered $N$-dimensional normal distribution
$N(0,\Sigma_N)$
with the  covariance matrix $\Sigma_N=(\sigma_{i,j}^2)_{i,j=1,\ldots,N}$ given by
\begin{equation}\label{covariance-a-de}
\sigma_{i,j}^2= \frac{ \sum_{l=0}^{N-1} \tilde H_l^2(z_{i,N}) \tilde H_l^2(z_{j,N})
+ \sum_{l=0}^{N-2}\tilde H_{l+1}(z_{i,N}) \tilde H_l(z_{i,N}) \tilde
H_{l+1}(z_{j,N})\tilde H_{l}(z_{j,N})}{
\sum_{l=0}^{N-1}\tilde H_l^2(z_{i,N}) \cdot \sum_{l=0}^{N-1}\tilde H_l^2(z_{j,N}) }.
\end{equation}
\end{theorem}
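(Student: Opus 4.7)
The plan is to follow the original Dumitriu--Edelman approach, which exploits the tridiagonal $\beta$-Hermite matrix model together with first-order eigenvalue perturbation theory. A direct rescaling of the density \eqref{density-A-0} shows that, for $\beta=2k$, the vector $X_{t,k}/\sqrt t$ is equal in law (for every $t>0$, by self-similarity) to the ordered spectrum of the symmetric tridiagonal random matrix
\begin{equation*}
H_\beta \;=\; \tfrac{1}{\sqrt 2}\begin{pmatrix} g_1 & \chi_{(N-1)\beta} & & \\ \chi_{(N-1)\beta} & g_2 & \chi_{(N-2)\beta} & \\ & \ddots & \ddots & \ddots \\ & & \chi_{\beta} & g_N \end{pmatrix},
\end{equation*}
where $g_l\sim\mathcal N(0,2)$ and the chi-variables are independent of each other and of the $g_l$. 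I would split $H_\beta=\sqrt{\beta}\,T_N+D_\beta$, with $T_N$ the deterministic tridiagonal matrix having vanishing diagonal and subdiagonal entries $\sqrt{(N-l)/2}$, and with $D_\beta$ collecting the centred fluctuations: the diagonal entries of $D_\beta$ are $\mathcal N(0,1)$, and its subdiagonal entries $\chi_{m\beta}/\sqrt 2-\sqrt{m\beta/2}$ have variance tending to $1/4$ as $\beta\to\infty$. Using $\mathbb E[\chi_d]=\sqrt d-1/(4\sqrt d)+O(d^{-3/2})$ and a central limit argument entry by entry, $D_\beta$ converges jointly in distribution to a tridiagonal Gaussian matrix $D_\infty$ with independent $\mathcal N(0,1)$ on the diagonal and independent $\mathcal N(0,1/4)$ on the subdiagonal.

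The key structural observation is that, after reversing the basis ordering, $T_N$ is exactly the Jacobi matrix of the orthonormal Hermite polynomials $(\tilde H_n)$ from \eqref{hermite-orthonormal}. Hence the eigenvalues of $T_N$ are precisely $z_{1,N}<\cdots<z_{N,N}$, and the normalized eigenvector $v_i$ attached to $z_{i,N}$ has components (up to the same reversal) $v_i^{(l)}=\tilde H_{l-1}(z_{i,N})/\sqrt{K_N(z_{i,N})}$, with Christoffel--Darboux kernel $K_N(z):=\sum_{l=0}^{N-1}\tilde H_l(z)^2$. Since the $z_{i,N}$ are simple, standard first-order symmetric perturbation theory yields
\begin{equation*}
\lambda_i(H_\beta)\;=\;\sqrt{\beta}\,z_{i,N}+v_i^{\top}D_\beta v_i+O(\beta^{-1/2})
\end{equation*}
uniformly in $i=1,\dots,N$, so that $\bigl(\lambda_i(H_\beta)-\sqrt{2k}\,z_{i,N}\bigr)_{i=1}^N$ converges jointly in distribution to the Gaussian vector $(v_i^{\top}D_\infty v_i)_{i=1}^N$.

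For the covariance, independence of the entries of $D_\infty$ together with the variances $1$ and $1/4$ gives
\begin{equation*}
\operatorname{Cov}\bigl(v_i^{\top}D_\infty v_i,\,v_j^{\top}D_\infty v_j\bigr)\;=\;\sum_{l=1}^{N}(v_i^{(l)})^2(v_j^{(l)})^2+\sum_{l=1}^{N-1}v_i^{(l)}v_i^{(l+1)}v_j^{(l)}v_j^{(l+1)},
\end{equation*}
the factor $4$ produced by the doubled off-diagonal entries in the symmetric quadratic form being exactly cancelled by their variance $1/4$. Substituting the explicit eigenvector formula and reindexing reproduces precisely \eqref{covariance-a-de}.

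The principal technical obstacle is the uniform control of the perturbation remainder: one must verify that the higher-order corrections to all $N$ eigenvalues are jointly $o(1)$ as $\beta\to\infty$, not merely pointwise. This can be derived from the Lipschitz continuity of the eigenvalue map on a deterministic neighbourhood of $\sqrt{\beta}\,T_N$, the tightness of $D_\beta$, and the strictly positive spectral gaps of $T_N$ inherited from the simplicity of the zeros of $H_N$; arranging these ingredients into joint convergence for all $N$ coordinates is the only truly delicate point of the argument.
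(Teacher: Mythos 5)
Your argument is correct and is essentially the proof that the paper cites for this statement (Theorem 3.1 of \cite{DE2}): the tridiagonal $\beta$-Hermite model, the identification of the deterministic part with the (index-reversed) Jacobi matrix of the orthonormal Hermite polynomials so that its spectrum is $\{z_{i,N}\}$ with eigenvectors $\bigl(\tilde H_{l-1}(z_{i,N})\bigr)_l/\sqrt{K_N(z_{i,N})}$, first-order perturbation of the simple eigenvalues with an $O_P(\beta^{-1/2})$ remainder controlled by the spectral gaps $\sqrt{\beta}\,(z_{i+1,N}-z_{i,N})$, and the covariance computation in which the factor $4$ from the symmetric quadratic form cancels against the variance $1/4$ of the off-diagonal Gaussian limits, yielding exactly \eqref{covariance-a-de}. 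Note that the paper itself does not reprove this theorem but quotes it; its own machinery runs in the opposite direction, deriving $\Sigma_N^{-1}$ from the Bessel-process generator (Theorem \ref{clt-main-a-general-x}) and then inverting via dual orthogonal polynomials (Theorem \ref{covariance-matrix-general}), which is why it arrives at a formula for $\Sigma_N$ of a different shape that the authors cannot analytically reconcile with \eqref{covariance-a-de}.
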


This WLT was proved in \cite{V} by  a different method which leads to
an explicit formula for the inverse matrix $\Sigma_N^{-1}$, but not for $\Sigma_N$.
This approach  was improved in \cite{AV2}
from the starting point $0\in C_N^A$ to arbitrary starting points $x\in C_N^A$ where
 this WLT is slightly complicated for  $x\ne0$
 as the root system $A_{N-1}$ is not reduced on $\mathbb R^N$ for $N\ge2$.
 This means that with the vector ${\bf 1}:=(1,\ldots,1)\in \mathbb R^N$,  
the space $\mathbb R^N$ can be decomposed into   $\mathbb R\cdot {\bf 1}$
and its orthogonal complement
 $$ {\bf 1}^\perp=\{x\in\mathbb R^N:\> \sum_{i=1}^N x_i=0\} \subset \mathbb R^N$$ 
so that the associated Weyl group, namely the symmetric group $\mathfrak{S}_N$ here, acts on
both spaces separately. 
We denote the orthogonal projections from  $ \mathbb R^N$ onto  $\mathbb R\cdot {\bf
1}$ and  $ {\bf 1}^\perp$ by
 $\pi_{\bf 1}$ and $\pi_{{\bf 1}^\perp}$ respectively. In particular, for  $x\in
\mathbb R^N$, we have
 $\pi_{\bf 1}(x)= \bar x{\bf 1} $ for the center of gravity
 $\bar x=\frac{1}{N}\sum_{i=1}^N x_i$ of the particles. With these notations, the
following WLT is shown in \cite{AV2}:

\begin{theorem}\label{clt-main-a-general-x}
Consider the  Bessel processes $(X_{t,k})_{t\ge0}$ of type $A_{N-1}$ on $C_N^A$ 
with an arbitrary fixed
 starting point $ x\in C_N^A$.
Then, for each $t>0$,
$$\frac{X_{t,k}}{\sqrt t} -  \sqrt{2k}\cdot (z_{1,N},\ldots,z_{N,N})$$
converges for $k\to\infty$ in distribution to the  $N$-dimensional normal distribution
 $N(\pi_{\bf 1}(x/\sqrt{t}),\Sigma_N)$
where  the inverse  $\Sigma_N^{-1}=:S_N=(s_{i,j})_{i,j=1}^N$ of the covariance
matrix $\Sigma_N$ is given by
\begin{equation}\label{covariance-matrix-A}
s_{i,j}:=\left\{ \begin{array}{r@{\quad\quad}l}  1+\sum_{l\ne i}
(z_{i,N}-z_{l,N})^{-2} & \text{for}\quad i=j \\
   -(z_{i,N}-z_{j,N})^{-2} & \text{for}\quad i\ne j.  \end{array}  \right.
\end{equation}
\end{theorem}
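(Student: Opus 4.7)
My strategy is to extend the origin-started argument of Theorem \ref{clt-main-a-DE} to a general $x\in C_N^A$ by decomposing $X_{t,k}$ into its center-of-mass and its component on ${\bf 1}^\perp$. Since the drift $\sum_{j\ne i}(X_i-X_j)^{-1}$ is antisymmetric, the sum $\sum_i X_{t,k,i}$ is a one-dimensional standard Brownian motion, so $\bar X_{t,k} = \bar x + \tfrac{1}{N}\sum_i B_{t,i}$ depends on $x$ only through $\bar x$. Combined with $\sum_i z_{i,N}=0$ (the Hermite zeros are symmetric about the origin), the center-of-mass contribution to $X_{t,k}/\sqrt t - \sqrt{2k}\,z$ is $(\bar x/\sqrt t){\bf 1} + N(0, 1/N)\cdot {\bf 1}$. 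This supplies the mean $\pi_{\bf 1}(x/\sqrt t)$, and since $S_N{\bf 1} = {\bf 1}$ (check row sums), the eigenvalue $1$ of $\Sigma_N=S_N^{-1}$ along $\mathbb R\cdot{\bf 1}$ matches the center-of-mass variance $1/N$.

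For the reduced component on ${\bf 1}^\perp$, write $X_{t,k,i} = \sqrt{2kt}\,z_{i,N} + \sqrt t\,Y_{t,k,i}$. It\^o's formula applied to the Bessel SDE gives
\begin{equation*}
dY_{t,k,i} = \Bigl[-\tfrac{X_{t,k,i}}{2t^{3/2}} + \tfrac{k}{\sqrt t}\sum_{j\ne i}\tfrac{1}{X_{t,k,i}-X_{t,k,j}}\Bigr]\,dt + \tfrac{1}{\sqrt t}\,dB_{t,i},
\end{equation*}
and expanding the summand in powers of $(Y_i-Y_j)/(\sqrt{2k}(z_i-z_j))$, the defining identity $z_{i,N} = \sum_{j\ne i}(z_{i,N}-z_{j,N})^{-1}$ for the Hermite zeros cancels the $O(\sqrt k)$ drift and leaves
\begin{equation*}
dY_{t,k} = -\tfrac{1}{2t}\,S_N Y_{t,k}\,dt + \tfrac{1}{\sqrt t}\,dB_t + O(k^{-1/2})\,dt.
\end{equation*}
The time-change $u=\log t$ converts the limiting equation into the time-homogeneous Ornstein--Uhlenbeck SDE $dY_u = -\tfrac12 S_N Y_u\,du + d\tilde B_u$, whose unique stationary Gaussian law on ${\bf 1}^\perp$ has covariance $S_N^{-1}$ by the Lyapunov identity $S_N\cdot S_N^{-1}+S_N^{-1}\cdot S_N = 2I$. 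This is consistent with the geometric fact that the potential $F(w) = \|w\|^2 - 2\sum_{i<j}\log(w_j - w_i)$, which is the exponent of the explicit density \eqref{density-A-0} under the rescaling $y = \sqrt{2kt}\,w$, is uniquely minimized on the open chamber at $z$ with Hessian $\nabla^2 F(z) = 2S_N$.

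The main obstacle is to make rigorous the forgetting of the non-centroidal part of $x$ on ${\bf 1}^\perp$. The Taylor expansion above is valid only while $Y_{t,k}$ stays bounded, whereas the physical process may have $\|Y_{s,k}\|$ as large as $1/\sqrt s$ for $s$ close to $0$. The cure is an a priori confinement estimate: strict convexity of $F$ near $z$, quantified by the smallest eigenvalue of $2S_N$ on ${\bf 1}^\perp$, together with the order-$k$ strength of the attractive Calogero-Moser drift, forces $Y_{s,k}$ into a bounded neighbourhood of $0$ on a time scale short compared to any fixed $t>0$. A Gronwall argument then controls the cumulative $O(k^{-1/2})$ remainder and lets $Y_{t,k}$ relax to its stationary Gaussian on ${\bf 1}^\perp$ over the $O(1)$ relaxation window of the OU process in $u = \log t$ coordinates. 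Combining this with the independent center-of-mass Gaussian on $\mathbb R\cdot{\bf 1}$ yields the claimed limit $N(\pi_{\bf 1}(x/\sqrt t), S_N^{-1})$.
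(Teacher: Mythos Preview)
The paper does not prove this theorem; it is quoted from \cite{AV2} (extending \cite{V} for $x=0$) as background material, so there is no in-paper argument to compare against.

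On the substance: your formal computations are correct. The center-of-mass decoupling, the row-sum identity $S_N{\bf 1}={\bf 1}$, the Hermite-zero relation $z_i=\sum_{j\ne i}(z_i-z_j)^{-1}$ that kills the $O(\sqrt k)$ drift, the identification $\nabla^2 F(z)=2S_N$, and the OU stationary covariance $S_N^{-1}$ all check. One bookkeeping slip: the Taylor remainder in the drift of $Y$ carries an extra factor $1/t$, so it is $O(k^{-1/2}t^{-1})\,dt$ rather than $O(k^{-1/2})\,dt$; this is exactly what you need so that it becomes $O(k^{-1/2})\,du$ after $u=\log t$, but as written your Gronwall step would accumulate an unbounded error over $u\in(-\infty,\log t]$.

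The genuine gap is the confinement step, and your sketch does not close it. The Taylor expansion of $(X_i-X_j)^{-1}$ requires $|Y_i-Y_j|=o(\sqrt k)$, which fails on the interval $t\lesssim 1/k$ since there $Y_t\approx x/\sqrt t-\sqrt{2k}\,z$ is of order $\sqrt k$. Your proposed cure appeals to ``strict convexity of $F$ near $z$'' and the ``attractive Calogero--Moser drift'', but the first is the linearized picture you are trying to justify, and the second is a misnomer: the raw drift $k\sum_{j\ne i}(X_i-X_j)^{-1}$ is purely repulsive; confinement toward $\sqrt{2kt}\,z$ arises only through the $-X/(2t^{3/2})$ term coming from the time-dependent rescaling, and that term is itself singular at $t=0$. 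So the argument as stated is circular. A workable repair is to use the Markov property at an intermediate time $t_0\in(0,t)$ and control the law of $X_{t_0,k}$ by a direct estimate that does not rely on the linearization --- e.g.\ a large-deviation or Laplace bound on the transition density, which is the route taken in \cite{V} for $x=0$ via the explicit density \eqref{density-A-0} --- and then run your OU comparison on $[t_0,t]$, where in $u$-coordinates there is still unbounded relaxation time as $t_0\downarrow 0$.
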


In \cite{AV2}, the eigenvalues and eigenvectors of $S_N$  were determined via 
 finite  orthogonal polynomials which are orthogonal w.r.t.~the empirical
measures
\begin{equation}\label{empirical-measure-a}
\mu_N:=\frac{1}{N}(\delta_{z_{1,N}}+\ldots+\delta_{z_{N,N}})\in M^1(\mathbb R)
\end{equation}
of the zeros of $H_N$. For the general theory of (finite)  orthogonal polynomials 
we refer to the monographs \cite{C, I}. In fact, 
Gram-Schmidt orthonormalization of the monomials $x^n$, $n=0,\ldots,N-1$,  leads to
a unique 
 finite sequence of orthogonal polynomials 
$\{Q_n^{(N)}\}_{n=0}^{N-1}$ with positive leading coefficients, $\text{deg}
[Q_n^{(N)}]=n$, and with
\begin{equation}\label{normalization-ops-a}
\sum_{i=1}^N Q_n^{(N)}(z_{i,N}) Q_m^{(N)}(z_{i,N})=\delta_{n,m}
\quad\quad(n,m=0,\ldots, N-1).
\end{equation}
We then have the following result  by Theorem 3.1 of \cite{AV2}:

\begin{theorem}\label{ev-a}
For each $N\ge2$, the matrix  $S_N$ in Theorem \ref{clt-main-a-general-x} has the
eigenvalues $\lambda_k=k$ for
$k=1,2, \ldots, N$.
Moreover, for  $n=1,\ldots,N$, the vector
$$\bigl(Q_{n-1}^{(N)}(z_{1,N}), \ldots, Q_{n-1}^{(N)}(z_{N,N})\bigr)^T$$
is an eigenvector of $S_N$  for the eigenvalue $n$.
\end{theorem}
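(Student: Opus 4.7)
The plan is to diagonalize $S_N$ explicitly using the matrix $U=(u_{ij})_{i,j=1}^N$ with $u_{ij}=Q_{j-1}^{(N)}(z_{i,N})$. Relation \eqref{normalization-ops-a} says exactly that $U$ is orthogonal, $U^TU=I_N$, so it suffices to verify that its $n$-th column $v_n:=(Q_{n-1}^{(N)}(z_{1,N}),\dots,Q_{n-1}^{(N)}(z_{N,N}))^T$ satisfies $S_Nv_n=n\,v_n$. Using \eqref{covariance-matrix-A}, the $i$-th component of $S_Nv_n$ becomes
\[
(S_Nv_n)_i=Q_{n-1}^{(N)}(z_{i,N})+\sum_{l\ne i}\frac{Q_{n-1}^{(N)}(z_{i,N})-Q_{n-1}^{(N)}(z_{l,N})}{(z_{i,N}-z_{l,N})^2},
\]
and the task reduces to showing that this equals $n\,Q_{n-1}^{(N)}(z_{i,N})$ for every $i=1,\dots,N$ and $n=1,\dots,N$.

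Rather than attack each $Q_{n-1}^{(N)}$ individually, I would introduce the linear operator $T$ on the space $\mathcal{P}_{N-1}$ of polynomials of degree at most $N-1$, defined, via the evaluation isomorphism $f\mapsto(f(z_{1,N}),\dots,f(z_{N,N}))$, by pulling back the action of $S_N$ on $\mathbb{R}^N$. Symmetry of $S_N$ shows that $T$ is self-adjoint with respect to the bilinear form $\langle f,g\rangle:=\sum_{i=1}^N f(z_{i,N})g(z_{i,N})$, which is, up to the factor $N$, the inner product under which the $Q_k^{(N)}$ are orthonormal by \eqref{normalization-ops-a}.

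The heart of the argument is then to show that $T$ preserves the degree filtration $\mathcal{P}_0\subset\mathcal{P}_1\subset\dots\subset\mathcal{P}_{N-1}$ and that $T(x^{n-1})=n\,x^{n-1}+(\text{polynomial of degree}\le n-2)$ for each $n=1,\dots,N$; equivalently, $T$ is upper triangular in the monomial basis with diagonal $(1,2,\dots,N)$. The key input is the classical identity $\sum_{l\ne i}(z_{i,N}-z_{l,N})^{-1}=z_{i,N}$, which follows from the Hermite ODE $H_N''-2xH_N'+2NH_N=0$ evaluated at $z_{i,N}$, together with its natural higher-order companions obtained by successive differentiation of $H_N'/H_N$ at $z_{i,N}$. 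These give, in particular, explicit polynomial expressions in $z_{i,N}$ for $\sum_{l\ne i}(z_{i,N}-z_{l,N})^{-2}$ and for $\sum_{l\ne i}z_{l,N}^m/(z_{i,N}-z_{l,N})^j$. Expanding $(z_{i,N}^k-z_{l,N}^k)/(z_{i,N}-z_{l,N})$ as a polynomial in $z_{l,N}$ and applying these identities lets one read off both the degree and the leading coefficient of $T(x^{n-1})$; a direct check for $n=1,2,3,4$ already exhibits the pattern $T(x^{n-1})=n\,x^{n-1}+\dots$

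Once upper triangularity with diagonal $(1,2,\dots,N)$ is established, the eigenvalues of $T$, and hence of $S_N$, are exactly $1,2,\dots,N$, each simple. Self-adjointness of $T$ with respect to $\langle\cdot,\cdot\rangle$ together with the filtration property force the eigenpolynomial for eigenvalue $n$ to have degree exactly $n-1$ and to be orthogonal to every polynomial of strictly smaller degree; such a polynomial, normalized with positive leading coefficient, is unique, and by \eqref{normalization-ops-a} it coincides with $Q_{n-1}^{(N)}$. This yields the stated eigenvectors of $S_N$.

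The main obstacle is the explicit verification that $T(x^{n-1})$ has degree at most $n-1$ with leading coefficient exactly $n$. Tracking the contributions of $\sum_{l\ne i}z_{l,N}^m/(z_{i,N}-z_{l,N})^j$ for various $m,j$ requires combining several Hermite-zero identities; the cleanest bookkeeping uses the fact that these sums, viewed as functions of $z_{i,N}$, are themselves polynomials of controlled degree, read off from $H_N'/H_N$ and its derivatives after subtracting the pole at $z_{i,N}$. Pinning down the diagonal entry to be precisely $n$, with no $N$-dependent correction, is the part that demands the most care.
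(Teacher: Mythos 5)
Your proposal is correct and follows essentially the same route as the proof the paper relies on (Theorem 3.1 of \cite{AV2}, whose method is visible here in the proof of Proposition \ref{three-term-dual-Hermite}): apply $S_N$ to the monomial vectors $(z_{i,N}^{n-1})_{i=1,\ldots,N}$, show the induced operator on polynomials of degree $\le N-1$ is upper triangular in the monomial basis with diagonal $1,2,\ldots,N$, and then use the symmetry of $S_N$ together with the orthogonality \eqref{normalization-ops-a} to identify the eigenpolynomial for the eigenvalue $n$ with $Q_{n-1}^{(N)}$. The obstacle you flag resolves exactly as you sketch: writing $\sum_{l\ne i}\frac{z_{i,N}^{n-1}-z_{l,N}^{n-1}}{(z_{i,N}-z_{l,N})^2}=\sum_{m=0}^{n-2}z_{i,N}^{m}\sum_{l\ne i}\frac{z_{l,N}^{n-2-m}}{z_{i,N}-z_{l,N}}$ and using $\sum_{l\ne i}(z_{i,N}-z_{l,N})^{-1}=z_{i,N}$ (from the Hermite ODE) shows that each of the $n-1$ inner sums equals $z_{i,N}^{n-1}$ plus a polynomial of degree $\le n-3$ whose coefficients involve only the power sums of the zeros, so the diagonal entry is exactly $(n-1)+1=n$ with no $N$-dependent correction.
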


The finite orthogonal polynomials $\{Q_n^{(N)}\}_{n=0}^{N-1}$ admit a three-term-recurrence relation which can be derived from the proof of Theorem \ref{ev-a} in \cite{AV2}. This explicit relation will be essential below.  It will be convenient to write down this  relation for the
monic orthogonal polynomials
$\{\hat Q_n^{(N)}\}_{n=0}^{N-1}$ associated with $\{Q_n^{(N)}\}_{n=0}^{N-1}$, i.e.,  $Q_k^{(N)}=l_k\hat Q_k^{(N)}$ with the leading coefficients $l_k>0$  of
 $Q_k^{(N)}$:

\begin{proposition}\label{three-term-dual-Hermite}
The  monic  orthogonal polynomials $\{\hat Q_n^{(N)}\}_{n=0}^{N-1}$ satisfy
        \begin{equation}\label{3termqhermite}
\hat{Q}_{0}^{(N)}=1, \> \hat{Q}_{1}^{(N)}(x)=x,\>
        \hat{Q}_{k+1}^{(N)}(x)=x\hat{Q}_{k}^{(N)}(x)-\left(\frac{N-k}{2}\right)\hat{Q}_{k-1}^{(N)}(x)
        \end{equation}
for $k=1,...,N-2$.
\end{proposition}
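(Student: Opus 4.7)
The plan is to identify the coefficients of the general monic three-term recurrence
\[\hat Q_{k+1}^{(N)}=(x-\beta_k)\hat Q_k^{(N)}-\gamma_k\hat Q_{k-1}^{(N)}\]
satisfied by every monic orthogonal sequence. First I would observe that $H_N(-x)=(-1)^NH_N(x)$ forces the zero set $\{z_{1,N},\ldots,z_{N,N}\}$ to be symmetric about $0$, so the empirical measure $\mu_N$ in (\ref{empirical-measure-a}) is even. Each $\hat Q_k^{(N)}$ therefore has the parity of $k$, and the middle coefficient $\beta_k=\int x\,\hat Q_k^{(N)}(x)^2\,d\mu_N(x)\big/\!\int \hat Q_k^{(N)}(x)^2\,d\mu_N(x)$ vanishes. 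The content of Proposition~\ref{three-term-dual-Hermite} is thereby reduced to showing $\gamma_k=(N-k)/2$.

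Next I would recast the problem via the diagonalization of $S_N$ from Theorem~\ref{ev-a}. The $N\times N$ matrix $U$ with entries $U_{n,i}:=Q_{n-1}^{(N)}(z_{i,N})$ is orthogonal by (\ref{normalization-ops-a}), and Theorem~\ref{ev-a} gives $U S_N U^T=\mathrm{diag}(1,\ldots,N)$. Conjugating the diagonal matrix $Z:=\mathrm{diag}(z_{1,N},\ldots,z_{N,N})$ by $U$ yields the Jacobi matrix $J:=UZU^T$ of the orthonormal sequence $(Q_n^{(N)})$: symmetric, tridiagonal, with zero diagonal (by parity) and off-diagonal $(k,k+1)$-entry equal to $\sqrt{\gamma_k}$. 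Hence Proposition~\ref{three-term-dual-Hermite} is equivalent to the assertion $J_{k,k+1}=\sqrt{(N-k)/2}$ for $k=1,\ldots,N-2$.

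To determine these entries without explicitly computing $U$, I would argue inductively from the eigenvalue relation in Theorem~\ref{ev-a}, which with $T:=S_N-I$ reads componentwise
\[\sum_{l\neq i}\frac{\hat Q_k^{(N)}(z_{i,N})-\hat Q_k^{(N)}(z_{l,N})}{(z_{i,N}-z_{l,N})^2}=k\,\hat Q_k^{(N)}(z_{i,N}).\]
The key observation is that the commutator of $T$ with $Z$, applied to the eigenvector $(\hat Q_k^{(N)}(z_{i,N}))_i$, produces the ``nonlocal'' residual $\sum_{l\neq i}\hat Q_k^{(N)}(z_{l,N})/(z_{i,N}-z_{l,N})$. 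Using the classical Stieltjes identity $\sum_{l\neq i}(z_{i,N}-z_{l,N})^{-1}=z_{i,N}$, which follows from the Hermite ODE $H_N''-2xH_N'+2NH_N=0$ evaluated at its zeros, together with the divided-difference expansion $\hat Q_k^{(N)}(x)=\hat Q_k^{(N)}(z_{i,N})+(x-z_{i,N})r_{k,i}(x)$, this residual reduces to a polynomial in $z_{i,N}$ of degree less than $k$. Plugging the ansatz $\hat Q_{k+1}^{(N)}=x\hat Q_k^{(N)}-\gamma_k\hat Q_{k-1}^{(N)}$ into the eigenvalue identity at level $k+2$ and matching coefficients then forces $\gamma_k=(N-k)/2$, completing the induction.

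The main obstacle is the combinatorial bookkeeping for this residual sum: verifying at each step that it equals exactly $z_{i,N}\hat Q_k^{(N)}(z_{i,N})-2\gamma_k\hat Q_{k-1}^{(N)}(z_{i,N})$ depends crucially on the unweighted (counting measure) orthonormality (\ref{normalization-ops-a}) of the $Q_k^{(N)}$, which is what causes the ``reversed'' coefficients $(N-k)/2$ in place of the classical Hermite coefficients $k/2$. A more conceptual route, taken in Section~4 via de Boor--Saff duality \cite{BS}, identifies $(\hat Q_k^{(N)})_{k=0}^{N-1}$ directly as the dual of the finite segment $(\hat H_k)_{k=0}^{N-1}$ of monic Hermite polynomials and reads off $\gamma_k=(N-k)/2$ as the order-reversal of the Hermite recurrence coefficients; this avoids the bookkeeping entirely.
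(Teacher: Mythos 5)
Your reduction is set up correctly: the parity argument giving $\beta_k=0$ is fine, the identification of the evaluation vectors of the $\hat Q_k^{(N)}$ as eigenvectors of $S_N$ via Theorem \ref{ev-a} is the same starting point the paper uses, and both the commutator formula $([S_N,Z]v)_i=\sum_{l\ne i}v_l/(z_{i,N}-z_{l,N})$ and the Stieltjes identity $\sum_{l\ne i}(z_{i,N}-z_{l,N})^{-1}=z_{i,N}$ are correct. The gap is that the only quantitative assertion of the proposition, namely $\gamma_k=(N-k)/2$, is never derived: you write that ``matching coefficients then forces $\gamma_k=(N-k)/2$'' and then concede that verifying the residual identity is ``the main obstacle.'' Since that identity \emph{is} the proposition, what you have is a plan rather than a proof.

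The plan does close, and here is the missing step. The eigenvalue relations for $\hat Q_{k+1}^{(N)},\hat Q_k^{(N)},\hat Q_{k-1}^{(N)}$ combined with the commutator give
\[
\sum_{l\ne i}\frac{\hat Q_k^{(N)}(z_{l,N})}{z_{i,N}-z_{l,N}}
=\hat Q_{k+1}^{(N)}(z_{i,N})-\gamma_k\hat Q_{k-1}^{(N)}(z_{i,N})
=z_{i,N}\hat Q_k^{(N)}(z_{i,N})-2\gamma_k\hat Q_{k-1}^{(N)}(z_{i,N}),
\]
while the divided-difference expansion $\hat Q_k^{(N)}(x)=\hat Q_k^{(N)}(z_{i,N})+(x-z_{i,N})r_{k,i}(x)$ together with the Stieltjes identity rewrites the same sum as $z_{i,N}\hat Q_k^{(N)}(z_{i,N})-\sum_{l=1}^{N}r_{k,i}(z_{l,N})+(\hat Q_k^{(N)})'(z_{i,N})$. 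Hence $2\gamma_k\hat Q_{k-1}^{(N)}(z_{i,N})=\sum_{l=1}^{N}r_{k,i}(z_{l,N})-(\hat Q_k^{(N)})'(z_{i,N})$ at all $N$ zeros; both sides are polynomials in $z_{i,N}$ of degree at most $k-1<N$, so this is a polynomial identity, and comparing the coefficients of $z^{k-1}$ (equal to $2\gamma_k$ on the left and to $N-k$ on the right, since $\sum_{l}r_{k,i}(z_{l,N})$ has leading coefficient $N$ in the variable $z_{i,N}$ while $(\hat Q_k^{(N)})'$ has leading coefficient $k$) yields $\gamma_k=(N-k)/2$. Once completed in this way your argument is genuinely different from the paper's, which instead applies $S_N-kI_N$ to the monomial vectors $(z_{i,N}^{k-1})_i$, quotes the explicit expansion (3.5) of \cite{AV2} for the coefficient of $z_{i,N}^{k-3}$, and reads off $b_k=e_k-e_{k+1}$ from the $x^{k-2}$ coefficient of the monic eigenpolynomials; your route avoids that external formula at the cost of the residual computation above, and neither version actually requires an induction.
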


\begin{proof} For $k=1,\ldots,N$ consider the vector
$v_k=(z_{1,N}^{k-1},...,z_{N,N}^{k-1})^T$  as 
 in the proof of Theorem 2 in \cite{AV2}. Eq.~(3.5)  in \cite{AV2} shows
that  the $i$-th component of the vector $(S_N-kI_N)v_{k}$ has the form
$$\big((S_N-kI_N)v_{k}\big)_i=-\left(N-\frac{k-1}{2}\right)(k-2)z_{i,N}^{k-3}+s_k(z_{i,N})$$
with some polynomial $s_k$ of order at most $k-5$.        
Therefore, if we put
\begin{align}\label{defek}
        e_k:=-\left(N-\frac{k-1}{2}\right)\frac{k-2}{2},
        \end{align}
we obtain
\begin{align}\label{eqforek}
        \big((S_N-kI_N)&(v_{k}+e_kv_{k-2})\big)_i\notag\\
        =&\big((S_N-kI_N)(v_{k})+(S_N-(k-2)I_N)(e_kv_{k-2})-2e_kv_{k-2}\big)_i\notag\\
        =&-\left(N-\frac{k-1}{2}\right)(k-2)z_{i,N}^{k-3}
        -2e_kz_{i,N}^{k-3}+r_k(z_{i,N})\notag\\
=&r_k(z_{i,N})
        \end{align}
with some polynomial $r_k$ of degree at most $k-5$.
On the other hand, by  the proof of Theorem 2 in \cite{AV2}, there 
exist polynomials $p_{k}$ of order at most $k-5$  with
        \begin{align}\label{eqforrk}
        (S_N-kI_N)(p_{k}(z_{1,N}),...,p_{k}(z_{N,N}))=(r_{k}(z_{1,N}),...,r_{k}(z_{N,N})).
        \end{align}
(\ref{eqforek}) and (\ref{eqforrk}) imply that
        \begin{align*}
        \left(S_N-kI_N\right)&\left(z_{i,N}^{k-1}+e_kz_{i,N}^{k-3}-p_{k}(z_{i,k})\right)_{i=1,...,N}=(0)_{i=1,\ldots,N}.
        \end{align*}
In summary, we find monic polynomials $(q_k)_{k=0,\ldots,N-1}$ with
$\text{deg}\> q_k=k$ and $q_k(z)= z^{k}+e_{k+1}z^{k-2}-p_{k+1}(z)$ such that the vector
 $(q_k(z_{1,N}),\ldots,q_k(z_{N,N}))^{T}$ is an eigenvector of the matrix $S_N$ with the eigenvalue $k+1$.
Because the eigenvectors of $S_N$ are orthogonal, we conclude that 
 $(q_k)_{k=0,\ldots,N-1}$ is equal to the finite monic sequence 
        $\{\hat{Q}_{k}^{(N)}\}_{k=0,...,N-1}$ of orthogonal polynomials w.r.t.~the 
 measure $\mu_N$. As the measure  $\mu_N$ is symmetric, the $\hat{Q}_{k}^{(N)}$ have
a three-term recurrence
of the form
        \begin{align*}
        \hat{Q}_{k}^{(N)}(x)=x\hat{Q}_{k-1}^{(N)}(x)-b_k \hat{Q}_{k-2}^{(N)}(x)
        \end{align*}
with some coefficients $b_k>0$. This leads to
        \begin{align}\label{recrelation}
        x^k+e_{k+1}x^{k-2}=x^k+e_kx^{k-2}-b_k x^{k-2}+\text{terms of lower degree }.
        \end{align}
        Hence, by (\ref{defek}) and (\ref{recrelation}) for $k=1,...,N-1$,
        \begin{align*}
        b_k&=e_{k}-e_{k+1}=\left(N-\frac{k}{2}\right)\left(\frac{k-1}{2}\right)-\left(N-\frac{k-1}{2}\right)\left(\frac{k-2}{2}\right)\\
        &=\left(N-\frac{k}{2}\right)\left(\frac{k-1-(k-2)}{2}\right)-\frac{1}{2}\left(\frac{k-2}{2}\right)\\
        &=\frac{1}{2}\left(\frac{2N-k-k+2}{2}\right)=\frac{N-k+1}{2}.
        \end{align*}
        This leads to the three-term-recursion in the statement.
\end{proof}

The three-term-recurrence (\ref{hermite-orthonormal}) of the Hermite polynomials
implies 
that $H_n$ has the leading coefficient $2^n$. Hence,  by 
(\ref{hermite-orthonormal}), the monic 
 Hermite polynomials $(\hat{H}_n :=2^{-n}H_n)_{n\ge0}$ satisfy the 
three-term-recurrence 
\begin{equation}\label{monic-hermite-3-term}
H_0=1, \quad H_1(x)=x, x\hat{H}_n(x)=\hat{H}_{n+1}(x)+\frac{n}{2}\hat{H}_{n-1}(x)
\quad(n\ge1).
\end{equation}
This recurrence is  related to that in Proposition \ref{three-term-dual-Hermite} via the theory of dual orthogonal polynomials
by de Boor and Saff \cite{BS}. We  show in Section 4 that this connection between the sequences  $(\hat{H}_n)_{n\ge0}$ and $(\hat{Q}_{k}^{(N)})_{k=0,\ldots,N-1}$
also holds  for further random matrix models and the associated orthogonal polynomials. In this way, Proposition \ref{three-term-dual-Hermite}
can be also proved via the theory of dual orthogonal polynomials.

\section{ WLTs for Laguerre and Jacobi ensembles as $\beta\to\infty$}

In this section we  recapitulate some  WLTs for  $\beta\to\infty$ from \cite{DE2, AV2, V, HV}
for the Bessel processes  of type $B_N$ and the Jacobi case.

We first turn to  Bessel processes $(X_{t,k})_{t\ge0}$ of type $B_N$.
 These Markov processes live in the closed Weyl chamber
$$C_N^B:=\{x\in {\b R}^N:0\leq x_1\leq x_2\leq\cdots\leq x_N\},$$
and the generator of their transition semigroup is
\begin{equation}
L_Bf:=\frac{1}{2} \Delta f + k_1 \sum_{i=1}^{N}\frac{1}{x_i}\frac{\partial}{\partial
x_i}f
+k_2 \sum_{i=1}^N\Bigl( \sum_{j\ne i} \frac{1}{x_i-x_j}+\frac{1}{x_i+x_j}\Bigr)
\frac{\partial}{\partial x_i}f
\end{equation}
with  multiplicities $k_1,k_2\ge0$ and reflecting boundaries.
We write the multiplicities as $(k_1,k_2)=(\kappa\cdot\nu,\kappa)$ with $\nu,\kappa\ge0$ where the parameter $\beta$ from random matrix theory is
$\beta=2\kappa$.
The  transition probabilities of these processes for $t>0$ are again known; see 
\cite{R1, RV1}.
In particular, under the starting condition $X_{0,k}=0\in C_N^A $,
 $X_{t,k}$ has the  Lebesgue-density
\begin{equation}\label{density-B-0}
 \frac{c_k}{t^{\gamma_B+N/2}} e^{-\|y\|^2/(2t)} \cdot
\prod_{i<j}(y_j^2-y_i^2)^{2k_2}\cdot \prod_{i=1}^N y_i^{2k_1}\> dy
\end{equation}
on $C_N^B$ for $t>0$ with some known normalizations $c_k^B>0$ and
$$ \gamma_B(k_1,k_2)=k_2N(N-1)+k_1N.$$
 Up to scaling, these  distributions belong to the ordered spectra
of the  $\beta$-Laguerre ensembles in
\cite{DE1}. Using their  tridiagonal $\beta$-Laguerre models, Dumitriu and Edelman
\cite{DE2}
 derived a WLT for $\beta\to\infty$ 
where  the limits are given in terms of 
 the ordered zeros $z_{1,N}^{(\nu-1)}\leq\cdots  \leq z_{N,N}^{(\nu-1)}$ 
of the Laguerre polynomial $L_N^{(\nu-1)}$. 
We recapitulate that for $\alpha>-1$ the Laguerre polynomials
$(L_n^{(\alpha)})_{n\ge0}$
are orthogonal w.r.t. the density
 $\mathrm{e}^{-x}x^{\alpha}$ on $]0,\infty[$ as in  \cite{S} with the  three-term
recurrence relation
\begin{align}\label{laguerre-classical}
&L_0^{(\alpha)}=1, \quad L_1^{(\alpha)}(x)=-x+\alpha+1, \quad\notag\\
&(n+1) L_{n+1}^{(\alpha)}(x)=(-x+2n+\alpha+1)
L_n^{(\alpha)}(x)-(n+\alpha)L_{n-1}^{(\alpha)}(x) \quad(n\ge1).
\end{align}
The  Laguerre polynomials orthonormalized w.r.t. $\frac{1}{\Gamma(\alpha+1)}\mathrm{e}^{-x}x^{\alpha}$, the Gamma distribution,
 will be denoted by
$(\tilde L_n^{(\alpha)})_{n\ge0}$. By (5.1.1) of \cite{S}, we thus have
\begin{equation}\label{Laguerre-orthonormal}
 \tilde L_n^{(\alpha)}(x)={n+\alpha\choose n}^{-1/2} L_n^{(\alpha)}(x)
\quad\quad(n\ge0).
\end{equation}

Using these notations,
  Dumitriu and Edelman \cite{DE2} proved  for $\nu>0$ fixed,
$X_{0,(\beta\cdot\nu/2,\beta/2)}=0\in C_N^B$, and 
 $\beta\to\infty$
that with the vector $r\in C_N^B$ given by
\begin{equation}\label{def-vector-r-b} 
(z_{1,N}^{(\nu-1)},\ldots, z_{N,N}^{(\nu-1)})=(r_{1}^2,\ldots,r_{N}^2),
\end{equation}
the random variable
$$\frac{X_{t,(\beta\cdot\nu,\beta)}}{\sqrt t} -  \sqrt{\beta}\cdot r$$
converges in distribution to a centered normal random variable $N(0,\Sigma_N)$ with explicit formulas for the entries of $\Sigma_N$. As these formulas are quite complicated we skip them here.
Similar to the Hermite case, this WLT was extended  in \cite{V} to
arbitrary starting points as follows
 with explicit formulas for $\Sigma_N^{-1}$:

\begin{theorem}\label{clt-main-b}
Consider the  Bessel processes $(X_{t,k})_{t\ge0}$ of type $B_{N}$ on $C_N^B$ for
$k=(k_1,k_2)=(\kappa\cdot\nu,\kappa)$
 and $\kappa,\nu>0$ with start in $x\in C_N^B$.
Then, for each $t>0$,
$$\frac{X_{t,(\kappa\cdot\nu,\kappa)}}{\sqrt t} -  \sqrt{2\kappa}\cdot r$$
converges for $\kappa\to\infty$ to the centered $N$-dimensional distribution
$N(0,\Sigma_N)$
with the regular covariance matrix $\Sigma_N$ where
$\Sigma_N^{-1}=:S_N=(s_{i,j})_{i,j=1,\ldots,N}$ is given by
\begin{equation}\label{covariance-matrix-B}
s_{i,j}:=\left\{ \begin{array}{r@{\quad\quad}l}  1+\frac{\nu}{r_i^2}+\sum_{l\ne i}
(r_i+r_l)^{-2}+\sum_{l\ne i} (r_i-r_l)^{-2} & \text{for}\quad i=j, \\
   (r_i+r_j)^{-2}-(r_i-r_j)^{-2} & \text{for}\quad i\ne j.  \end{array}  \right.
\end{equation}
\end{theorem}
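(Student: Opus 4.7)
The plan is to proceed by a Laplace-method (saddle-point) argument, in complete parallel with the proof sketched for the $A_{N-1}$ case in \cite{AV1, AV2} that led to Theorem \ref{clt-main-a-general-x}. The essential input is the density (\ref{density-B-0}) for the starting point $0$, together with the general formula
\[
p_t^B(x,y)=\frac{c_k^B}{t^{\gamma_B+N/2}}\,e^{-(\|x\|^2+\|y\|^2)/(2t)}\,J_k^B\bigl(x/\sqrt{t},y/\sqrt{t}\bigr)\prod_{i<j}(y_j^2-y_i^2)^{2k_2}\prod_i y_i^{2k_1}
\]
for arbitrary starting points $x\in C_N^B$, where $J_k^B$ is the multivariate Bessel function of type $B_N$; see \cite{R1,RV1}.

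First I would change variables $y_i=\sqrt{t}\,(\sqrt{2\kappa}\,r_i+z_i)$ with $(k_1,k_2)=(\kappa\nu,\kappa)$ and isolate the $\kappa$-dependence of the log-density. After collecting terms, the dominant contribution takes the form $2\kappa\,\Phi(z)+O(1)$, and the critical point equations $\nabla_z\Phi(0)=0$ reduce, after dividing by a common factor, to
\[
-r_i+\frac{\nu}{r_i}+\sum_{j\ne i}\Bigl(\frac{1}{r_i-r_j}+\frac{1}{r_i+r_j}\Bigr)=0,\qquad i=1,\dots,N.
\]
Setting $u_i:=r_i^2$, this is the classical system characterising the zeros of the Laguerre polynomial $L_N^{(\nu-1)}$ (see e.g.\ \cite{S}, \S 6.7), so by (\ref{def-vector-r-b}) the unique admissible saddle point is precisely the vector $r\in C_N^B$ in the statement. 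Computing the Hessian $H=-D^2\Phi(0)$ term by term then yields exactly
\[
H_{ii}=1+\tfrac{\nu}{r_i^2}+\sum_{j\ne i}\bigl((r_i-r_j)^{-2}+(r_i+r_j)^{-2}\bigr),\quad H_{ij}=(r_i+r_j)^{-2}-(r_i-r_j)^{-2}\ (i\ne j),
\]
which is the matrix $S_N$ of (\ref{covariance-matrix-B}). Positivity of $H$ follows from the fact that $r$ is a strict maximum of $\Phi$ (the Laguerre critical point is non-degenerate); standard Laplace asymptotics then give weak convergence of the density of $z=X_{t,k}/\sqrt{t}-\sqrt{2\kappa}\,r$ to the centred Gaussian with covariance $\Sigma_N=H^{-1}$.

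For arbitrary $x\in C_N^B$ one must additionally control the factor $J_k^B(x/\sqrt{t},y/\sqrt{t})$ at $y=\sqrt{t}(\sqrt{2\kappa}\,r+z)$ as $\kappa\to\infty$. Here one uses the known integral/series representations and large-$k$ asymptotics of the type-$B$ Bessel function (analogous to those exploited in \cite{AV2}) to show that this factor contributes at most an $O(\kappa^{-1/2})$ perturbation to the quadratic exponent in $z$ and, crucially, produces \emph{no} deterministic shift in the mean; unlike the $A_{N-1}$ case, the root system $B_N$ is reduced, so there is no invariant "centre of mass" direction along which $x$ could enter the limit, which is precisely why the Gaussian in Theorem \ref{clt-main-b} is centred at $0$ regardless of $x$. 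The main obstacle will be this last point: obtaining sufficiently uniform control of $J_k^B$ in a $\kappa^{-1/2}$-neighbourhood of the saddle $r$ to absorb the $x$-dependence into a vanishing error, while the Hessian computation and the identification of the saddle with the zeros of $L_N^{(\nu-1)}$ are straightforward once the correct scaling has been set up.
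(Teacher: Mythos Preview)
The present paper does not prove Theorem~\ref{clt-main-b}; it is quoted from \cite{V} (with the $A$-analogue for arbitrary starting points in \cite{AV2}), so there is no in-paper argument to compare against. That said, your outline for the start at $0$ is precisely the computation underlying \cite{V}: the Laplace expansion of the explicit density \eqref{density-B-0} has its saddle determined by the Stieltjes relations for the zeros of $L_N^{(\nu-1)}$ (your substitution $u_i=r_i^2$ is the right one, and $\alpha=\nu-1$ drops out), and the negative Hessian of the log-density at that saddle is exactly the matrix $S_N$ of \eqref{covariance-matrix-B}. Your structural remark that $B_N$ is reduced, so that no centre-of-mass direction survives and the limit is centred regardless of $x$, is also correct and is the reason the $B$-statement is cleaner than Theorem~\ref{clt-main-a-general-x}.

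Where your sketch departs from the cited proofs is the extension to arbitrary $x\in C_N^B$. You propose to control $J_k^B(x/\sqrt t,\,y/\sqrt t)$ via large-$k$ asymptotics, but this is delicate because both the multiplicity $(\kappa\nu,\kappa)$ and the second argument $y/\sqrt t\sim\sqrt{2\kappa}\,r$ move with $\kappa$; pointwise limits of $J_k^B$ do not suffice. The references \cite{AV1,V,VW} sidestep this by working with the SDE for $(X_{t,k})$ rather than the transition density: after centring at $\sqrt{2\kappa}\,r$, the drift linearises to an Ornstein--Uhlenbeck system with matrix $-S_N$, and the strictly positive spectrum of $S_N$ (Theorem~\ref{ev-b}) forces the initial condition to be forgotten in the $\kappa\to\infty$ limit. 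Your density-based route is not wrong in principle, but the ``main obstacle'' you flag is real, and it is exactly what the SDE/generator approach in the literature is designed to avoid.
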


Again,  the eigenvalues and eigenvectors of  $S_N$  were determined via finite
 orthogonal polynomials in \cite{AV2}.
 For this we introduce the measures
\begin{equation}\label{orthogonality-measure-b1}
\mu_{N,\nu}:=\frac{1}{N(N+\nu-1)}(z_{1,N}^{(\nu-1)}\delta_{z_{1,N}^{(\nu-1)}}+\ldots+z_{N,N}^{(\nu-1)}\delta_{z_{N,N}^{(\nu-1)}}).
\end{equation}
As
\begin{equation}\label{sum-zeroes-b}
\sum_{k=1}^N z_{k,N}^{(\nu-1)}= N(N+\nu-1) 
\end{equation}
by Appendix C of \cite{AKM2}, these measures are probability measures.
We now define the unique associated orthogonal polynomials
$(Q_{k}^{(N,\nu)})_{k=0,\ldots,N-1}$
 with $\deg\> Q_{k}^{(N,\nu)}= k$, positive leading coefficients, and with the
normalization
\begin{equation}\label{eq:Laguerrenormalization}
\sum_{i=1}^N z_{i,N}^{(\nu-1)}Q_{k}^{(N,\nu)}(z_{i,N}^{(\nu-1)})^2=1
\quad\quad(k=0,\ldots,N-1).
\end{equation}
With this  normalization, by \cite{AV2} the matrices
\begin{equation}\label{trafo-matrix-b}
T_N:= (r_i\cdot Q_k^{(N,\nu)}(r_i^2))_{i=1,\ldots,N, k=0,\ldots,N-1}
\end{equation}
are orthogonal, and moreover we have the following

\begin{theorem}\label{ev-b}
For $N\geq 2$, the matrix $S_N$ in Theorem~\ref{clt-main-b} has the eigenvalues $\lambda_k=2(k+1)$ with corresponding eigenvectors
\[(r_1Q_{k}^{(N,\nu)}(r_1^2),\ldots,r_N Q_{k}^{(N,\nu)}(r_N^2))^T,\ k=0,1,\ldots,N-1.\]
In particular,  
$S_N= T_N \cdot \textup{diag}(2,4,\ldots,2N) \cdot T_N^T.$
\end{theorem}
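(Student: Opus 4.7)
The plan is to mirror the Hermite strategy used in Theorem \ref{ev-a} and Proposition \ref{three-term-dual-Hermite}. For $k=1,\ldots,N$ I would consider the ``monomial'' test vectors $v_k:=(r_i^{2k-1})_{i=1}^N$ and prove by induction that
\begin{equation*}
S_N v_k = 2k\, v_k + (\text{linear combination of } v_1,\ldots,v_{k-1}).
\end{equation*}
Thus $S_N$ is upper-triangular in the basis $(v_k)_{k=1}^N$ with diagonal entries $2,4,\ldots,2N$, and an inductive correction produces eigenvectors of the form $w_k=(r_i q_{k-1}(r_i^2))^T_{i=1,\ldots,N}$ for the eigenvalues $2k$, with $q_{k-1}$ of degree $k-1$ and positive leading coefficient.

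The core computation treats a general vector $u=(r_i p(r_i^2))_i$. Grouping each off-diagonal contribution $[(r_i+r_j)^{-2}-(r_i-r_j)^{-2}]r_jp(r_j^2)$ with the corresponding piece of $s_{i,i}r_ip(r_i^2)$ and using the elementary identity $(r_i\pm r_l)(r_i\pm r_l)^{-2}=(r_i\pm r_l)^{-1}$ collapses $(S_Nu)_i$ into
\begin{equation*}
\Bigl(1+\frac{\nu}{r_i^2}\Bigr)r_ip(r_i^2) \;+\; \sum_{l\ne i}\frac{f(r_i)-f(r_l)}{(r_i-r_l)^2} \;+\; \sum_{l\ne i}\frac{f(r_i)+f(r_l)}{(r_i+r_l)^2},
\end{equation*}
with $f(r):=rp(r^2)$ odd. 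Each divided difference is a polynomial in $(r_i,r_l)$, so in terms of $z_i:=r_i^2$ the remaining work reduces to sums $\sum_{l\ne i}z_l^m/(z_i-z_l)$, which via $z_l=z_i-(z_i-z_l)$ reduce further to polynomials in $z_i$ plus the single transcendental ingredient
\begin{equation*}
\sum_{l\ne i}\frac{2}{z_i-z_l} = 1-\frac{\nu}{z_i},
\end{equation*}
obtained from the Laguerre ODE $xL_N''+(\nu-x)L_N'+NL_N=0$ at $z_i=z_{i,N}^{(\nu-1)}$ together with the classical formula $L_N''(z_i)/L_N'(z_i)=2\sum_{l\ne i}1/(z_i-z_l)$. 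Applied to $p\equiv 1$ this yields at once $(S_Nv_1)_i=2r_i$; applied to $p(x)=x^{k-1}$ it yields the upper-triangular structure with leading coefficient $2k$.

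Once the eigenvectors $w_k=(r_iq_{k-1}(r_i^2))_i$ at the distinct eigenvalues $2k$ are in hand, symmetry of $S_N$ forces $\langle w_k,w_m\rangle=0$ for $k\ne m$, i.e.
\begin{equation*}
\sum_{i=1}^N r_i^2\,q_{k-1}(r_i^2)\,q_{m-1}(r_i^2)=0 \qquad (k\ne m),
\end{equation*}
which by (\ref{orthogonality-measure-b1}) is precisely orthogonality in $L^2(\mu_{N,\nu})$. Together with the degree and positive-leading-coefficient conditions this identifies $q_{k-1}$ with $Q_{k-1}^{(N,\nu)}$ up to a positive scalar, and fixing this scalar by (\ref{eq:Laguerrenormalization}) renders the $w_k$ orthonormal. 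Hence $T_N$ from (\ref{trafo-matrix-b}) is an orthogonal matrix and $S_N=T_N\cdot\textup{diag}(2,4,\ldots,2N)\cdot T_N^T$.

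The main obstacle is the bookkeeping that shows the remainder $S_Nv_k-2k\,v_k$ truly lies in $\mathrm{span}(v_1,\ldots,v_{k-1})$, with no leakage into higher-degree monomials. Following the pattern of (\ref{defek})--(\ref{eqforek}), one should explicitly produce coefficients $e_k$ so that $(S_N-2kI_N)(v_k+e_kv_{k-1})$ has degree at most $2k-3$ in each $r_i$; this yields, as a by-product, an explicit three-term recurrence for the monic polynomials $\hat Q_k^{(N,\nu)}$ analogous to (\ref{3termqhermite}). That recurrence is in fact the de Boor--Saff dual of the Laguerre recurrence (\ref{laguerre-classical}), and Section 4 will give a conceptual derivation via that duality; the direct computation sketched here is, however, self-contained.
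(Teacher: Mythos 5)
The paper does not actually prove Theorem \ref{ev-b}: it is imported verbatim from \cite{AV2}, and the only proof-like material in the paper for this circle of facts is the Hermite-case computation in Proposition \ref{three-term-dual-Hermite} (itself leaning on the proof of Theorem 2 in \cite{AV2}) and the later re-derivation of the recurrences via de Boor--Saff duality in Section 4. Your plan therefore supplies something the paper omits, and it follows the same strategy as the cited source rather than the duality route. The key ingredients check out. The electrostatic identity is correct: the Laguerre ODE $xL_N''+(\nu-x)L_N'+NL_N=0$ at a zero $z_i$ gives $L_N''(z_i)/L_N'(z_i)=1-\nu/z_i=2\sum_{l\ne i}(z_i-z_l)^{-1}$, and for $p\equiv1$ one indeed gets
\begin{equation*}
(S_Nv_1)_i=r_i+\frac{\nu}{r_i}+\sum_{l\ne i}\Bigl(\frac{1}{r_i-r_l}+\frac{1}{r_i+r_l}\Bigr)=r_i+\frac{\nu}{r_i}+r_i\Bigl(1-\frac{\nu}{z_i}\Bigr)=2r_i,
\end{equation*}
matching $\lambda_0=2$. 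For $f(r)=r^{2k-1}$ the combined numerator over the common denominator $z_i-z_l$ is even in $r_l$, so it is a polynomial in $(r_i,z_l)$; Taylor-expanding it around $z_l=z_i$ shows that the leading contribution is $(2k-1)r_i^{2k-1}\cdot\tfrac{1}{2}\sum_{l\ne i}\tfrac{2}{z_i-z_l}$ plus terms of degree at most $2k-3$ in $r_i$, whence the diagonal entry $1+(2k-1)=2k$ and the triangular structure; this confirms your claimed eigenvalues $2k$, $k=1,\dots,N$ (the theorem's $\lambda_k=2(k+1)$, $k=0,\dots,N-1$, after the index shift). The identification of the eigenvectors with $Q_{k}^{(N,\nu)}$ via symmetry of $S_N$ and the normalization \eqref{eq:Laguerrenormalization}, \eqref{orthogonality-measure-b1} is exactly right. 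The one point you should not gloss over is the claim that $(S_Nu)_i$ for $u_i=r_ip(r_i^2)$ is again of the form $r_i\tilde p(r_i^2)$ with $\deg\tilde p\le\deg p$ and \emph{no} dependence on $i$ beyond $z_i$: after the substitution $z_l=z_i-(z_i-z_l)$ the residual sums $\sum_{l\ne i}(z_i-z_l)^{j}$, $j\ge1$, involve the power sums of the zeros, which are constants independent of $i$ (computable from the recurrence \eqref{laguerre-classical} or Newton's identities), so this does hold, but it is the place where the bookkeeping must be written out. What your direct computation buys, beyond self-containedness, is the explicit three-term recurrence for $\hat Q_k^{(N,\nu)}$ as a by-product; what the paper's Section 4 route buys is a uniform treatment of all three ensembles without any such bookkeeping.
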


The three-term recurrence relations of the  polynomials 
$(Q_{k}^{(N,\nu)})_{k=0,\ldots,N-1}$
can  be determined in the same way as in the proof of Proposition \ref{three-term-dual-Hermite}. We skip this derivation here, as we shall present a more elegant proof of these relations in Section~4 via dual orthogonal polynomials.

We next turn to the $\beta$-Jacobi ensembles. 
In contrast with the Bessel processes on noncompact spaces, we do not study the associated Jacobi processes, but turn immediately to their invariant distributions. It turns out that by \cite{HV}, it is convenient for our considerations to study these invariant distributions in a trigonometric form, that is, after performing the coordinate transformation
$$(t_1,\ldots,t_N)\longrightarrow (\cos(2t_1),\ldots,
\cos(2t_N)).$$
Up to this transformation we follow \cite{F, K, KN, Me, HV} and consider for $k_1,k_2,k_3\ge0$ the trigonometric $\beta$-Jacobi  random matrix ensembles 
with the joint eigenvalue distributions $\tilde\mu_{(k_1,k_2,k_3)}$ given by the Lebesgue densities
\begin{equation}\label{density-joint-trig}
\tilde c_k\cdot \prod_{1\leq i< j \leq N}\left(\cos(2t_j)-\cos(2t_i)\right)^{k_3}
\prod_{i=1}^N\Bigl(\sin(t_i)^{k_1}\sin(2t_i)^{k_2}\Bigr)
\end{equation}
on the trigonometric alcoves
$$\tilde A:=\{t\in\R^N|\frac{\pi}{2}\geq t_1\geq...\geq t_N\geq 0 \}$$
with a suitable Selberg normalization $\tilde c_k>0$ for
$k=(k_1,k_2,k_3)\in[0,\infty[^3$; see the survey \cite{FW} for explicit formulas.
In \cite{HV}, a WLT was derived which corresponds to the preceding freezing WLTs for Bessel processes. We write
$$(k_1,k_2,k_3)=\kappa\cdot(a,b,1),$$
where $a\ge0$ $b>0$ are fixed and $\kappa$ tends to infinity. By \cite{HV}, the limit can be described via the ordered zeros
of the classical Jacobi polynomials 
$P_N^{(\alpha,\beta)}$ with parameters
$$\alpha:=a+b-1>-1, \quad\beta=b-1>-1.$$
Please notice that here, $\beta$ is a parameter different from the $\beta$ in  random matrix theory.
We recapitulate that the Jacobi polynomials  $(P_n^{(\alpha,\beta)})_{n\ge0}$  are orthogonal polynomials
 w.r.t.~the weights $(1-x)^\alpha(1+x)^\beta$   on $]-1,1[$;
see  \cite{S}. We denote their ordered zeros by  $z_1\le \ldots\le z_N$ 
where we the suppress   $\alpha,\beta>-1$.  We now use the vector 
$z:=(z_1, \ldots, z_N)\in A$. The following WLT is shown in \cite{HV}:

\begin{theorem}\label{theoremCLT-transformed}
Let $a\ge0$, $b>0$.
Let $\tilde X_\kappa$ be $\tilde A$-valued  random variables with the distributions 
$\tilde\mu_{\kappa\cdot(a,b,1)}$  for $\kappa>0$.
 Then, for $\kappa\rightarrow\infty$
 $$\sqrt{\kappa}(\tilde X_\kappa-\tilde z)   \quad\quad\text{with} \quad\quad
\tilde z:= (\frac{1}{2}\arccos z_1, \ldots,\frac{1}{2}\arccos z_N)\in\tilde A $$
converges in distribution to  $N(0,\tilde\Sigma_N)$ where the  inverse 
of the  covariance matrix $\tilde\Sigma_N$ is given by
 $\tilde\Sigma_N^{-1}=:\tilde S_N=(\tilde s_{i,j})_{i,j=1,...,N}$ with
\begin{align*}
\tilde s_{i,j}=
\begin{cases}4\sum_{ l\ne j}
\frac{1-z_j^2}{(z_j-z_l)^2}+2(a+b)\frac{1+z_j}{1-z_j}+2b\frac{1-z_j}{1+z_j}
&\textit{ for }i=j\\
\frac{-4\sqrt{(1-z_j^2)(1-z_i^2)}}{(z_i-z_j)^2}&\textit{ for }i\neq j
\end{cases}.
\end{align*}
\end{theorem}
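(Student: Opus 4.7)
The strategy is to write the joint density \eqref{density-joint-trig} as $\tilde c_\kappa\exp(\kappa F(t))$ with
\begin{equation*}
F(t) := \sum_{i<j}\log\bigl(\cos(2t_j)-\cos(2t_i)\bigr) + a\sum_{i=1}^N\log\sin(t_i) + b\sum_{i=1}^N\log\sin(2t_i),
\end{equation*}
and run a Laplace-type analysis as $\kappa\to\infty$. The measure $\tilde\mu_{\kappa\cdot(a,b,1)}$ then concentrates at the interior maximiser of $F$, and the limiting Gaussian has inverse covariance $-\mathrm{Hess}\,F$ evaluated there. The steps are therefore: (i) identify the maximiser as $\tilde z$, (ii) compute the Hessian and match it to $\tilde S_N$, (iii) upgrade the local picture to weak convergence.

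For (i), set $z_l:=\cos(2t_l)$ and differentiate to get
\begin{equation*}
\partial_k F(t) = -2\sin(2t_k)\sum_{l\neq k}\frac{1}{z_k-z_l} + a\cot(t_k) + 2b\cot(2t_k).
\end{equation*}
At $t=\tilde z$ the sum is handled via the classical Stieltjes identity obtained from evaluating the Jacobi ODE at a zero of $P_N^{(\alpha,\beta)}$:
\begin{equation*}
\sum_{l\neq k}\frac{1}{z_k-z_l} = \frac{(\alpha-\beta)+(\alpha+\beta+2)z_k}{2(1-z_k^2)},
\end{equation*}
with $\alpha-\beta=a$ and $\alpha+\beta+2=a+2b$. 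Using $1-z_k^2=\sin^2(2t_k)$, $1-z_k=2\sin^2 t_k$, and $1+z_k=2\cos^2 t_k$, the three contributions of $\partial_k F(\tilde z)$ cancel algebraically, so $\nabla F(\tilde z)=0$.

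For (ii), off-diagonal Hessian entries come only from the interaction term and evaluate at $\tilde z$ to $\partial_k\partial_m F = 4\sqrt{(1-z_k^2)(1-z_m^2)}/(z_k-z_m)^2 = -\tilde s_{k,m}$. Along the diagonal a second differentiation gives
\begin{equation*}
\partial_k^2 F = -4\sin^2(2t_k)\sum_{l\neq k}\frac{1}{(z_k-z_l)^2} - 4z_k\sum_{l\neq k}\frac{1}{z_k-z_l} - \frac{a}{\sin^2(t_k)} - \frac{4b}{\sin^2(2t_k)}.
\end{equation*}
The first term is already $-4\sum_{l\neq k}(1-z_k^2)/(z_k-z_l)^2$. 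Substituting the Stieltjes identity into the remaining three terms and reducing to common denominator $1-z_k^2$, the numerator telescopes to $-2(a+b)(1+z_k)^2-2b(1-z_k)^2$, matching the residual part of $-\tilde s_{k,k}$. Hence $-\mathrm{Hess}\,F(\tilde z)=\tilde S_N$.

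Positive definiteness of $\tilde S_N$ follows from diagonal dominance (the diagonal contains $4\sum_{l\neq k}(1-z_k^2)/(z_k-z_l)^2$ plus strictly positive boundary terms, and the off-diagonals are bounded by $2[(1-z_k^2)+(1-z_l^2)]/(z_k-z_l)^2$ by AM-GM), so $\tilde z$ is a strict interior maximum; that it is the global maximum is Stieltjes's electrostatic characterisation of the Jacobi zeros. A standard Laplace expansion under the substitution $t=\tilde z+s/\sqrt{\kappa}$ then yields convergence in distribution of $\sqrt{\kappa}(\tilde X_\kappa-\tilde z)$ to $N(0,(-\mathrm{Hess}\,F(\tilde z))^{-1})=N(0,\tilde\Sigma_N)$. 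The main obstacle will be the uniform boundary control required to make the Laplace argument rigorous: one must establish a global quadratic estimate $F(t)-F(\tilde z)\leq -c\,d(t,\tilde z)^2$ on all of $\tilde A$ (not just locally) and match it against the $\kappa$-asymptotics of the Selberg constant $\tilde c_\kappa$ obtained from Stirling's formula, in order to rule out escape of mass toward the walls of the alcove.
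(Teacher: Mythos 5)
The paper offers no proof of Theorem \ref{theoremCLT-transformed}: it is quoted verbatim from \cite{HV}, so there is no internal argument to compare yours against; your Laplace-method derivation is the natural direct route and, to the extent it can be compared, matches the strategy of that reference. Your computations check out. Writing the density as $\tilde c_\kappa e^{\kappa F}$, the Stieltjes identity $\sum_{l\neq k}(z_k-z_l)^{-1}=\bigl((\alpha-\beta)+(\alpha+\beta+2)z_k\bigr)/\bigl(2(1-z_k^2)\bigr)$ with $\alpha-\beta=a$, $\alpha+\beta+2=a+2b$ indeed forces $\nabla F(\tilde z)=0$, and the Hessian entries reduce to $-\tilde s_{i,j}$; on the diagonal the residual numerator is $-2a(1+z_k)^2-4b(1+z_k^2)=-2(a+b)(1+z_k)^2-2b(1-z_k)^2$, exactly as you claim. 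Two repairs are needed. First, your positive-definiteness argument by diagonal dominance does not go through as written: the AM--GM bound $|\tilde s_{k,l}|\le 2\bigl[(1-z_k^2)+(1-z_l^2)\bigr]/(z_k-z_l)^2$ leaves the contributions $2(1-z_l^2)/(z_k-z_l)^2$ in row $k$ uncontrolled by the $k$-th diagonal entry, so row-wise dominance fails. Use the quadratic form instead: with $u_i:=\sqrt{1-z_i^2}\,v_i$ the interaction part of $v^T\tilde S_Nv$ equals $4\sum_{i<j}(u_i-u_j)^2/(z_i-z_j)^2\ge 0$ while the boundary terms contribute strictly positive multiples of $v_i^2$; alternatively, note that $F=G\circ z$ with $G(z)=\sum_{i<j}\log(z_j-z_i)+\tfrac{a+b}{2}\sum\log(1-z_i)+\tfrac{b}{2}\sum\log(1+z_i)$ strictly concave on $\{-1<z_1<\dots<z_N<1\}$, so that at the critical point $-\mathrm{Hess}\,F=(Dz)^T(-\mathrm{Hess}\,G)(Dz)$ with $Dz$ an invertible diagonal matrix; this also settles uniqueness and globality of the maximum, since $t\mapsto(\cos 2t_1,\dots,\cos 2t_N)$ is a diffeomorphism of the interiors. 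Second, you do not need the Selberg/Stirling asymptotics of $\tilde c_\kappa$: the law is the self-normalized ratio $e^{\kappa F}/\int_{\tilde A}e^{\kappa F}$, so the constant cancels, and the tail control is routine because $\tilde A$ is compact, $F\to-\infty$ at the walls, and $\tilde z$ is the unique interior critical point. With these adjustments the argument is complete.
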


The eigenvalues and eigenvectors of $\tilde S_N$ can be determined explicitly. 
For this we introduce  finite families of polynomials which are orthogonal w.r.t.~the measures
\begin{equation}\label{orthogonality-measure-finite-jacobi}
\mu_{N,\alpha,\beta}:=(1-z_1^2)\delta_{z_1}+\ldots+(1-z_N^2)\delta_{z_N}.
\end{equation}
We consider the associated finite  orthonormal polynomials
$(Q_{l}^{(\alpha,\beta,N)})_{l=0,\ldots,N-1}$
with positive leading coefficients and the normalization
\begin{align}\label{orthgonality-equation-finite-jacobi}
\sum_{i=1}^N Q_{l}^{(\alpha,\beta,N)}(z_i)
Q_{k}^{(\alpha,\beta,N)}(z_i)(1-z_i^2)=\delta_{l,k} \quad\quad(k,l=0,\ldots,N-1).
\end{align}
By \cite{HV} we then have:

\begin{theorem}\label{ev-main}
The matrix $\tilde S_N$  has the  eigenvalues
$\lambda_k=2k(2N+\alpha+\beta+1-k)>0$ ($k=1,\ldots,N$) with the eigenvectors
$$v_k:=\left(Q_{k-1}^{(\alpha,\beta,N)}(z_1)\sqrt{1-z_1^2},\ldots,Q_{k-1}^{(\alpha,\beta,N)}(z_N)\sqrt{1-z_N^2}\right)^T.
$$
In particular, with the orthogonal matrix $T_N:=(v_1,\ldots,v_N)$,
$$\tilde S_N= T_N   \cdot \operatorname{diag}(2(2N+\alpha+\beta+1-1),\ldots,
2N(2N+\alpha+\beta+1-N))\cdot T_N^{T}.$$ 
\end{theorem}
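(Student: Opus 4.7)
I would mirror the strategy used for the Hermite case in the proof of Proposition~\ref{three-term-dual-Hermite} and for the Laguerre case in Theorem~\ref{ev-b}. Write $P_N:=P_N^{(\alpha,\beta)}$, $Q_{k-1}:=Q_{k-1}^{(\alpha,\beta,N)}$, and $D:=\textup{diag}(\sqrt{1-z_1^2},\ldots,\sqrt{1-z_N^2})$. Conjugation by $D$ reduces the claim to: for $k=1,\ldots,N$, the vector $u_k:=(Q_{k-1}(z_1),\ldots,Q_{k-1}(z_N))^T$ is an eigenvector of $M:=D^{-1}\tilde S_N D$ with eigenvalue $\lambda_k=2k(2N+\alpha+\beta+1-k)$. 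Since the zeros $z_1<\cdots<z_N$ are distinct, every vector in $\mathbb R^N$ is uniquely an evaluation vector $(p(z_i))_{i=1}^N$ of some $p\in\mathbb R[x]_{\le N-1}$, so $M$ corresponds to a linear operator $\mathcal L$ on $\mathbb R[x]_{\le N-1}$ via $M(p(z_i))_i=((\mathcal Lp)(z_i))_i$. It then suffices to verify: (i) $\mathcal L$ preserves the degree filtration with $\mathcal L(x^{k-1})=\lambda_k\,x^{k-1}+\text{(lower order)}$; (ii) $\mathcal L$ is self-adjoint with respect to $\langle p,q\rangle=\sum_{i=1}^N(1-z_i^2)p(z_i)q(z_i)$; and (iii) the $\lambda_k$ are pairwise distinct.

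The main work lies in (i). Starting from
\[
(\mathcal Lp)(z_i)\;=\;\tilde s_{ii}\,p(z_i)\;-\;4\sum_{j\neq i}\frac{1-z_j^2}{(z_i-z_j)^2}\,p(z_j),
\]
I would expand the sum through Lagrange interpolation at $z_1,\ldots,z_N$: the derivatives $\ell_j'(z_i),\ell_j''(z_i)$ of the fundamental polynomials $\ell_j(x):=P_N(x)/((x-z_j)P_N'(z_j))$ express $\sum_{j\ne i}p(z_j)/(z_i-z_j)^r$ ($r=1,2$) as linear combinations of $p(z_i),p'(z_i),p''(z_i)$, with coefficients involving the ratios $P_N''(z_i)/P_N'(z_i)$ and $P_N'''(z_i)/P_N'(z_i)$. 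Writing $1-z_j^2=(1-z_i^2)+(z_i-z_j)(z_i+z_j)$ reduces the weighted sum to these basic ones. Substituting the Jacobi differential equation $(1-x^2)P_N''+[\beta-\alpha-(\alpha+\beta+2)x]P_N'+N(N+\alpha+\beta+1)P_N=0$ and its first derivative at $x=z_i$ (using $P_N(z_i)=0$) turns the derivative ratios into explicit rational functions of $z_i$; the resulting $(1-z_i)^{-1}$ and $(1+z_i)^{-1}$ singularities are compensated exactly by the endpoint contributions $2(a+b)(1+z_i)/(1-z_i)$ and $2b(1-z_i)/(1+z_i)$ in $\tilde s_{ii}$. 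A sanity check gives $\mathcal L(1)=2(2N+\alpha+\beta)=\lambda_1$, while $\mathcal L(x)$ comes out to $4(2N+\alpha+\beta-1)\,x+\text{const}=\lambda_2\,x+\text{const}$, confirming the pattern; the same manipulation for general $x^m$ shows that $\mathcal L$ is upper triangular in the monomial basis with diagonal entries $\lambda_1,\ldots,\lambda_N$.

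Step (ii) is immediate from the symmetry of $\tilde S_N$: for all $u,v\in\mathbb R^N$ one has $u^TD^2(Mv)=(Mu)^TD^2v$, which in polynomial terms reads $\langle p,\mathcal Lq\rangle=\langle\mathcal Lp,q\rangle$ for the inner product attached to $\mu_{N,\alpha,\beta}$. For (iii), $\lambda_k-\lambda_l=2(k-l)(2N+\alpha+\beta+1-(k+l))$ vanishes for distinct $k,l\in\{1,\ldots,N\}$ only if $k+l=2N+\alpha+\beta+1$, but $k+l\le 2N-1$ while $2N+\alpha+\beta+1>2N-1$ since $\alpha+\beta=a+2b-2>-2$; hence the $\lambda_k$ are distinct. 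Combining (i)--(iii), self-adjointness and distinct eigenvalues force $\mathcal L$ to admit an orthonormal eigenbasis with respect to $\langle\cdot,\cdot\rangle$, and upper triangularity forces the eigenfunction for $\lambda_k$ to have exact degree $k-1$; hence it coincides, up to a sign fixed by the positive-leading-coefficient convention, with $Q_{k-1}$. The main obstacle is the cancellation in step (i): the three disparate pieces of $\tilde s_{ii}$ (the Coulomb-type sum and the two singular endpoint terms) must mesh precisely with the boundary poles arising from the Lagrange expansion, and this matching relies crucially on repeated use of the Jacobi differential equation at its zeros $z_i$.
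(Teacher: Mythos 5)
Your plan is correct, and it is essentially the argument of the paper: Theorem \ref{ev-main} is quoted from \cite{HV} without proof here, and both the proof there and the paper's own Hermite-case analogue (Proposition \ref{three-term-dual-Hermite}, via Eq.~(3.5) of \cite{AV2}) rest on exactly your mechanism --- showing that the conjugated operator acts triangularly on monomial evaluation vectors with diagonal entries $\lambda_k$, then invoking symmetry of $\tilde S_N$ and distinctness of the $\lambda_k$ to identify the eigenvectors with the orthogonal polynomials for $\mu_{N,\alpha,\beta}$. Your sanity checks $\mathcal L(1)=2(2N+\alpha+\beta)$ and $\mathcal L(x)=4(2N+\alpha+\beta-1)x+\mathrm{const}$ are verified by the Jacobi differential equation at the zeros, and the remaining general-degree computation is routine along the lines you indicate.
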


As mentioned previously, the three-term recurrence relations of the  polynomials $(Q_{k}^{(\alpha,\beta,N)})_{k=0,\ldots,N-1}$ can  be determined in the same way as in the proof of Proposition \ref{three-term-dual-Hermite}. 
A more elegant proof of these relations  via dual orthogonal polynomials will be
given in the following section.

\section{De Boor-Saff duality and the covariance matrices}

In this section we use the theory of dual orthogonal polynomials 
  of de Boor and Saff \cite{BS}  to analyze the covariance 
matrices $\Sigma_N$ of the WLTs in the three cases of the preceding two sections.
This is motivated by the observation that the finite monic orthogonal polynomials 
$(\hat Q_k^{(N)})_{k=0,\ldots,N-1}$ in Section 2  are  the 
dual polynomials of the Hermite polynomials $(\hat H_k)_{k\ge0}$ by Proposition
\ref{three-term-dual-Hermite}.

To explain this we first  review  this theory from \cite{VZ} and  Section
2.11 of \cite{I}.
Let  $(\hat{P}_n)_{n=0}^\infty$ be a sequence of  monic orthogonal polynomials
where the orthogonality measure is a probability measure $\mu$ on $\mathbb{R}$ which
admits all moments, i.e.,
\begin{equation}
\int_\mathbb{R}\hat{P}_i(x)\hat{P}_j(x)d\mu(x)=\xi_i\delta_{ij}
\quad(i,j=0,1,2,\ldots)
\end{equation}
with some constants $\xi_i>0$ ($i\ge0$). We also have a  three-term recurrence
relation
\begin{equation}\label{monic-generic-3-term}
\hat{P}_0=1,\> \hat{P}_1(x)=x-a_0,\> 
x\hat{P}_n(x)=\hat{P}_{n+1}(x)+a_n\hat{P}_{n}(x)+u_n\hat{P}_{n-1}(x) \quad(n\ge1)
\end{equation}
with coefficients $a_n\in\mathbb R$ and  $u_n> 0$. We also consider the associated 
orthonormal polynomials
 $(\tilde{P}_n:=\xi_n^{-1/2}\hat{P}_n)_{n=0}^{\infty}$ with 
$\int_\mathbb{R}\tilde{P}_i(x)\tilde{P}_j(x)d\mu(x)=\delta_{ij}$.
These polynomials then satisfy the three-term recurrence
\begin{equation}\label{normal-generic-3-term}
\tilde{P}_0=1,\> \tilde{P}_1(x)=b_1^{-1}(x-a_0),\>
x\tilde{P}_{n}(x)=b_{n+1}\tilde{P}_{n+1}(x)+a_{n}\tilde{P}_{n}(x)+b_{n}\tilde{P}_{n-1}(x)
\quad(n\ge1)
\end{equation}
with $b_{n}=u_n\sqrt{\xi_{n-1}/\xi_{n}}=\sqrt{\xi_{n}/\xi_{n-1}}$ for $n\ge1$.
In particular we have 
\begin{equation}\label{xi-bn-un} 
\xi_0=1, \> \xi_n=u_n u_{n-1}\cdots u_1 \quad \text{and}\quad b_n=\sqrt{u_n} \quad
(n\ge1).
\end{equation}
Now fix $N>0$ arbitrarily. Gaussian quadrature implies that the finite set of polynomials
$(\tilde{P}_n)_{n=0}^{N-1}$
 obeys the discrete orthogonality relation
\begin{equation}
\sum_{i=1}^{N}w_i\tilde{P}_m(z_{i,N})\tilde{P}_n(z_{i,N})=\delta_{mn},
\end{equation}
with the  $N$ ordered zeros $z_{1,N}<\ldots<z_{n,N}$ of  $\tilde{P}_N$
and  the Christoffel numbers
\begin{equation}
w_i:=\frac{1}{b_N\tilde{P}_{N-1}(z_{i,N})\tilde{P}_{N}^\prime(z_{i,N})}>0
\quad(i=1,\ldots,N)
\end{equation}
which satisfy the normalization  $\sum_{i=1}^Nw_i=1$.

\begin{definition}\label{def-duality}
Let  $N>0$.  The monic polynomials $(\hat{Q}_{k,N})_{k=0}^{N-1}$ 
are called dual (in the de Boor-Saff sense) to $(\hat{P}_{n}(x))_{n=0}^{N-1}$ if
they satisfy the three-term recurrence
        \begin{align}\label{3-term-dual-monic}
&\hat{Q}_{0,N}=1, \> \hat{Q}_{1,N}(x)=x-a_{N-1},\\
&        x\hat{Q}_{k,N}(x)=\hat{Q}_{k+1,N}(x)+a_{N-k-1}\hat{Q}_{k,N}(x)+u_{N-k}\hat{Q}_{k-1,N}(x)
\quad(k=1,\ldots,N-2).
\notag        \end{align}
\end{definition}

This definition and Proposition  \ref{three-term-dual-Hermite} imply 
 that the polynomials
$(\hat Q_k^{(N)})_{k=0,\ldots,N-1}$ from Section 2 are in fact dual to the  monic
Hermite poynomials $\hat H_n$.

We now  recapitulate some consequences of this duality from  \cite{VZ}:

\begin{lemma} The dual monic polynomials $(\hat{Q}_{k,N})_{k=0}^{N-1}$ are
orthogonal w.r.t.~the discrete measure
$$\sum_{i=1}^Nw_i^*\delta_{z_{i,N}}$$
with the dual Christoffel numbers
\begin{align}\label{dual-Christoffel}
w_i^*=\frac{\tilde{P}_{N-1}(z_{i,N})}{b_N\tilde{P}_{N}^\prime(z_{i,N})}>0\quad
(i=1,\ldots,N)
\end{align}
which again satisfy $\sum_{i=1}^Nw_i^*=1$.
\end{lemma}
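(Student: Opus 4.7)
The plan is to interpret the duality at the level of Jacobi matrices. To the finite orthonormal sequence $(\tilde{P}_n)_{n=0}^{N-1}$ associate the symmetric tridiagonal matrix $J_N$ with diagonal entries $a_0,a_1,\ldots,a_{N-1}$ and off-diagonal entries $b_1,\ldots,b_{N-1}$. By (\ref{normal-generic-3-term}) and Gauss quadrature, $J_N$ has eigenvalues $z_{1,N},\ldots,z_{N,N}$ with orthonormal eigenvectors
$$u_i=\sqrt{w_i}\bigl(\tilde{P}_0(z_{i,N}),\tilde{P}_1(z_{i,N}),\ldots,\tilde{P}_{N-1}(z_{i,N})\bigr)^T.$$
Conjugating $J_N$ by the flip permutation $F=(\delta_{r,N+1-s})_{r,s}$ produces the Jacobi matrix $J_N^{\ast}:=FJ_NF$ whose diagonal is $a_{N-1},a_{N-2},\ldots,a_0$ and whose off-diagonal is $b_{N-1},\ldots,b_1$. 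Reading off its three-term recurrence one sees that $J_N^{\ast}$ is exactly the Jacobi matrix produced by the recursion (\ref{3-term-dual-monic}) normalized to the orthonormal form $\tilde{Q}_{k,N}:=\hat{Q}_{k,N}/\sqrt{u_N u_{N-1}\cdots u_{N-k+1}}$, so the entries $\sqrt{w_i^{\ast}}\bigl(\tilde{Q}_{0,N}(z_{i,N}),\ldots,\tilde{Q}_{N-1,N}(z_{i,N})\bigr)^T$ will be eigenvectors of $J_N^{\ast}$ for $z_{i,N}$, once $w_i^{\ast}$ is fixed so that the first coordinate matches a valid normalization.

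Because $J_N^{\ast}=FJ_NF$, the vectors $Fu_i$ are orthonormal eigenvectors of $J_N^{\ast}$, i.e.
$$Fu_i=\sqrt{w_i}\bigl(\tilde{P}_{N-1}(z_{i,N}),\tilde{P}_{N-2}(z_{i,N}),\ldots,\tilde{P}_0(z_{i,N})\bigr)^T.$$
Uniqueness of the eigenvector up to sign (the spectrum is simple, since $z_{i,N}$ are distinct) yields the identification
$$\sqrt{w_i^{\ast}}\,\tilde{Q}_{k,N}(z_{i,N})=\sqrt{w_i}\,\tilde{P}_{N-1-k}(z_{i,N})\qquad(k=0,\ldots,N-1),$$
where the sign is pinned by the initial condition $\tilde{Q}_{0,N}\equiv 1$. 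Setting $k=0$ forces $w_i^{\ast}=w_i\,\tilde{P}_{N-1}(z_{i,N})^2$, and substituting the Christoffel number $w_i=1/(b_N\tilde{P}_{N-1}(z_{i,N})\tilde{P}_N'(z_{i,N}))$ gives precisely (\ref{dual-Christoffel}); positivity of $w_i^{\ast}$ follows from positivity of $w_i$ (classical) and from the fact that $\tilde{P}_{N-1}$ and $\tilde{P}_N'$ have the same sign at each zero of $\tilde{P}_N$ (by interlacing).

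Given this identification, the discrete orthogonality of the $\hat{Q}_{k,N}$ with weights $w_i^{\ast}$ is immediate: the orthonormality of the eigenvectors $Fu_i$ of the symmetric matrix $J_N^{\ast}$ reads
$$\sum_{i=1}^N w_i^{\ast}\tilde{Q}_{m,N}(z_{i,N})\tilde{Q}_{n,N}(z_{i,N})=\delta_{mn},$$
which translated back to the monic version produces the claimed orthogonality with discrete measure $\sum_i w_i^{\ast}\delta_{z_{i,N}}$. The normalization $\sum_i w_i^{\ast}=1$ is then the case $m=n=0$ of the above identity (using $\tilde{Q}_{0,N}\equiv 1$), or equivalently a rewriting of the Christoffel--Darboux identity at the zeros of $\tilde{P}_N$.

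The only delicate step is the middle one: verifying that the ansatz $\sqrt{w_i^{\ast}}\tilde{Q}_{k,N}(z_{i,N})=\sqrt{w_i}\tilde{P}_{N-1-k}(z_{i,N})$ does arise from the eigenvector of $J_N^{\ast}$ produced by the flip, since one must compare the three-term recurrence normalized by the $b_n$'s with the dual monic recurrence (\ref{3-term-dual-monic}) to confirm that the relations $b_n=\sqrt{u_n}$ from (\ref{xi-bn-un}) yield precisely the Jacobi matrix $FJ_NF$; everything else is bookkeeping.
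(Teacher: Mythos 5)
Your argument is correct, and it is essentially the standard de Boor--Saff mechanism. The paper does not actually prove this lemma (it is recapitulated from \cite{VZ}), but the very same device you use --- reading the dual three-term recurrence as the eigenvalue equation for the flipped Jacobi matrix $FJ_NF$, invoking simplicity of the spectrum to identify the resulting eigenvector with $Fu_i$ up to a constant, and pinning that constant via $\tilde{Q}_{0,N}\equiv 1$ --- is exactly what the paper deploys immediately after this lemma to obtain $c_{i,N}=\tilde{P}_{N-1}(z_{i,N})$ in \eqref{cifirstexpression}. One bookkeeping correction: the orthonormalizing constant for the dual sequence is $\sqrt{u_{N-1}u_{N-2}\cdots u_{N-k}}$, i.e.\ the square root of the product of the $k$ coefficients $u_{N-1},\ldots,u_{N-k}$ that actually occur in \eqref{3-term-dual-monic} (so $\xi_k^*=u_{N-k}\xi_{k-1}^*$), not $\sqrt{u_Nu_{N-1}\cdots u_{N-k+1}}$ as you wrote; with your shifted product the normalized recurrence does not symmetrize to the off-diagonal entries $b_{N-1},\ldots,b_1$ of $FJ_NF$. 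The slip is harmless for your conclusion, since the only facts you actually use are $\tilde{Q}_{0,N}\equiv 1$ and that the vector of values $\tilde{Q}_{k,N}(z_{i,N})$ solves the eigenvalue equation for $FJ_NF$, both of which survive the correction. Finally, positivity of $w_i^*$ is immediate from the identity $w_i^*=w_i\,\tilde{P}_{N-1}(z_{i,N})^2$ together with $\tilde{P}_{N-1}(z_{i,N})\neq 0$, so the appeal to interlacing is not needed there.
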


In particular, by (\ref{xi-bn-un}), the normalized dual polynomials
$(\tilde{Q}_{k,N})_{k=0}^{N-1}$
with
\begin{equation}\label{dual-orthogonality}
\sum_{i=1}^Nw_i^*\tilde{Q}_{m,N}(z_{i,N})\tilde{Q}_{n,N}(z_{i,N})=\delta_{mn}
\quad(m,n=0,\ldots,N-1)
\end{equation}
satisfy
\begin{align}\label{dual-normalized}
\tilde{Q}_{k,N}(x)=\frac{\hat{Q}_{k,N}}{b_{N}^2b_{N-1}^2\cdots b_{N-k}^2}.
\end{align}
In summary we obtain from \eqref{dual-normalized} and the three-term-recurrence in
Definition~\ref{def-duality}:

\begin{lemma}\label{orthonormal-generic-3-term}
        The orthonormal  dual polynomials $(\tilde{Q}_{k,N})_{k=0}^{N-1}$ satisfy the
three-term-recurrence relation
        \begin{align}\label{dual-recurrence-normal}
&\tilde{Q}_{0,N}=1,\> \tilde{Q}_{1,N}(x)=b_{N-1}^{-1}(x-a_{N-1}),\>\\
&        x\tilde{Q}_{k,N}(x)=b_{N-k-1}\tilde{Q}_{k+1,N}(x)+a_{N-k-1}\tilde{Q}_{k,N}(x)+b_{N-k}\tilde{Q}_{k-1,N}(x)\quad
( k\leq N-2).
        \notag\end{align}
\end{lemma}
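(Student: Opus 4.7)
The plan is to apply the generic passage from monic to orthonormal polynomials, encoded in (\ref{monic-generic-3-term})--(\ref{xi-bn-un}), directly to the dual sequence. The key observation is that the monic dual recurrence (\ref{3-term-dual-monic}) for $(\hat Q_{k,N})$ has exactly the same shape as (\ref{monic-generic-3-term}) for $(\hat P_n)$, but with the index substitution $a_k \mapsto a_{N-k-1}$ and $u_k \mapsto u_{N-k}$. All quantities relevant to orthonormalization can therefore be read off by pushing this substitution through the generic formulas.

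First I would extract the dual orthogonality constants $\xi_k^*$, that is, the analogue of $\xi_n$ for $(\hat P_n)$. Transporting the formula $\xi_n=u_nu_{n-1}\cdots u_1$ from (\ref{xi-bn-un}) through the substitution gives
$$\xi_0^*=1,\qquad \xi_k^* = u_{N-1}u_{N-2}\cdots u_{N-k}\qquad(k=1,\ldots,N-1),$$
and the dual analogues of $b_n$ become $b_k^* := \sqrt{\xi_k^*/\xi_{k-1}^*} = \sqrt{u_{N-k}} = b_{N-k}$, where the last equality uses the relation $b_n=\sqrt{u_n}$ from (\ref{xi-bn-un}).

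Next I would invoke (\ref{dual-normalized}) -- equivalently, $\tilde Q_{k,N} = \hat Q_{k,N}/\sqrt{\xi_k^*}$, which is the normalization forced by (\ref{dual-orthogonality}) -- and divide the monic dual three-term recurrence (\ref{3-term-dual-monic}) by $\sqrt{\xi_k^*}$. For $1\le k\le N-2$ this produces
$$x\tilde Q_{k,N}(x) = \sqrt{\xi_{k+1}^*/\xi_k^*}\,\tilde Q_{k+1,N}(x) + a_{N-k-1}\tilde Q_{k,N}(x) + u_{N-k}\sqrt{\xi_{k-1}^*/\xi_k^*}\,\tilde Q_{k-1,N}(x).$$
The first coefficient equals $b_{k+1}^*=b_{N-k-1}$, while the last simplifies to $u_{N-k}/\sqrt{u_{N-k}}=\sqrt{u_{N-k}}=b_{N-k}$. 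This is exactly (\ref{dual-recurrence-normal}). The initial step is analogous: $\tilde Q_{1,N}=\hat Q_{1,N}/\sqrt{\xi_1^*}=(x-a_{N-1})/\sqrt{u_{N-1}}=(x-a_{N-1})/b_{N-1}$.

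There is no deep obstacle; the proof is essentially a bookkeeping exercise that transports the monic-to-orthonormal conversion through the index reversal $n\leftrightarrow N-k$. The only pitfall is an off-by-one slip in the indexing at the boundary $k=0$ and in matching the squared versus single powers of $b$ appearing in (\ref{dual-normalized}); I would double-check that $b_1^* = \sqrt{u_{N-1}} = b_{N-1}$ and that the quotients $\sqrt{\xi_{k\pm1}^*/\xi_k^*}$ telescope to the claimed $b_{N-k\mp 1}$, after which the lemma follows directly.
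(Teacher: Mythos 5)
Your proposal is correct and follows essentially the same route as the paper, which obtains the lemma by dividing the monic dual recurrence from Definition~\ref{def-duality} by the orthonormalization constants. In fact your normalization $\tilde Q_{k,N}=\hat Q_{k,N}/\sqrt{\xi_k^*}$ with $\sqrt{\xi_k^*}=b_{N-1}b_{N-2}\cdots b_{N-k}$ is the one that actually yields the claimed coefficients $b_{N-k-1}$ and $b_{N-k}$; the paper's displayed formula \eqref{dual-normalized} with $b_N^2b_{N-1}^2\cdots b_{N-k}^2$ appears to be a misprint, since it would produce $u_{N-k-1}$ and $1$ instead.
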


\begin{remark}\label{remark-orthonormal-recurrence-upper-bound}
The monic three-term-recurrence (\ref{3-term-dual-monic}) is also available for
$k=N-1$, namely, we obtain a 
monic polynomial $\hat{Q}_{N,N}$. It can be easily seen (see \cite{VZ} or Section
2.11 of \cite{I}) that
$\hat{Q}_{N,N}= \hat P_N$ holds. Moreover, if we choose $b_0=0$ in
(\ref{dual-recurrence-normal}), then the recurrence
(\ref{orthonormal-generic-3-term}) remains valid for $k=N-1$, arbitrary polynomials
$\tilde{Q}_{k+1,N}$, and $x=z_{i,N}$ for $i=1,\ldots,N$.
\end{remark}

We next apply finite dual orthogonal polynomials in order to obtain additional information about 
the covariance matrices $\Sigma_N$ in the WLTs \ref{clt-main-a-general-x},
\ref{clt-main-b}, and 
\ref{theoremCLT-transformed}. In these cases, in $S_N=\Sigma_N^{-1}$, the Hermite
polynomials $H_n$,
 the Laguerre polynomials $L_n^{(\alpha)}$ with $\alpha=\nu-1$ and the Jacobi
polynomials $P_n^{(\alpha,\beta)}$
with $\alpha=a+b-1$, $\beta=b-1$ respectively appear. For fixed $N$ we now study the
associated orthonormal
dual polynomials which we denote by
 $({Q}_{k,N})_{k=0}^{N-1}$, $({Q}_{k,N}^{(\alpha)})_{k=0}^{N-1}$, and
$({Q}_{k,N}^{(\alpha,\beta)})_{k=0}^{N-1}$ respectively,
In all cases, let $z_{1,N}<\ldots<z_{N,N}$ be the ordered zeros of the $N$th
polynomial. With these notations we have:

\begin{lemma}\label{general-eigenvectors}
In the Hermite, Laguerre and Jacobi cases, orthonormal  eigenvectors of 
$S_N=\Sigma_N^{-1}$ are given  by the vectors
        \begin{align}
                \frac{1}{\sqrt{\kappa_N}}(\sqrt{\pi(z_{1,N})}\tilde{Q}_{j-1,N}(z_{1,N}),\ldots,\sqrt{\pi(z_{N,N})}\tilde{Q}_{j-1,N}(z_{N,N}))^T,\
1\leq j\leq N,
        \end{align}
        where the coefficients $\kappa_N$ and functions $\pi(x)$ are given by
        \begin{align}
                \pi(x)&=1,\quad\pi^{(\alpha)}(x)=x,\quad\pi^{(\alpha,\beta)}(x)=1-x^2,\text{ and }\\
                \kappa_N&=N,\quad\kappa_N^{(\alpha)}=N(N+\alpha),\quad
                \kappa_N^{(\alpha,\beta)}=\frac{4N(N+\alpha)(N+\beta)(N+\alpha+\beta)}{(2N+\alpha+\beta)^2(2N+\alpha+\beta-1)}\notag
        \end{align}
respectively.
\end{lemma}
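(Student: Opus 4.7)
The plan is to prove the lemma in three stages: first identify the dual Christoffel numbers $w_i^*$ in each case; then use them to identify the orthonormal dual polynomials $(\tilde{Q}_{k,N})_{k=0}^{N-1}$ with the polynomials $(Q_k^{(N)})$, $(Q_k^{(N,\nu)})$, $(Q_l^{(\alpha,\beta,N)})$ from Sections 2 and 3; and finally read off the eigenvectors from Theorems \ref{ev-a}, \ref{ev-b}, \ref{ev-main}.

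For the first stage I would apply formula (\ref{dual-Christoffel}) and the classical differentiation identities for the three polynomial families. For Hermite polynomials, $\tilde{H}_N'(x) = \sqrt{2N}\,\tilde{H}_{N-1}(x)$ together with $b_N = \sqrt{N/2}$ gives $w_i^* = 1/N$ in one line. For Laguerre, using the classical relation $xL_N^{(\alpha)\prime}(x) = NL_N^{(\alpha)}(x) - (N+\alpha)L_{N-1}^{(\alpha)}(x)$ specialized to a zero $z_{i,N}$, passing to the orthonormal version (with the sign convention $(-1)^n\tilde{L}_n^{(\alpha)}$ so that leading coefficients are positive as required in \eqref{normal-generic-3-term}), and inserting $b_N = \sqrt{N(N+\alpha)}$, one obtains $w_i^* = z_{i,N}/(N(N+\alpha))$, i.e.\ $\pi^{(\alpha)}(z_{i,N})/\kappa_N^{(\alpha)}$. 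For Jacobi, the analogous identity relates $(1-z_{i,N}^2)P_N^{(\alpha,\beta)\prime}(z_{i,N})$ to $P_{N-1}^{(\alpha,\beta)}(z_{i,N})$ (cf.\ Szeg\H{o} \cite{S}, Ch.~4); combined with $b_N = \sqrt{\xi_N/\xi_{N-1}}$ computed from the explicit Jacobi squared norms, this yields $w_i^* = (1-z_{i,N}^2)/\kappa_N^{(\alpha,\beta)}$.

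For the second stage, with $w_i^* = \pi(z_{i,N})/\kappa_N$ the orthonormality (\ref{dual-orthogonality}) of $(\tilde{Q}_{k,N})$ becomes
$$\sum_{i=1}^N \pi(z_{i,N})\,\tilde{Q}_{m,N}(z_{i,N})\,\tilde{Q}_{n,N}(z_{i,N}) \;=\; \kappa_N\,\delta_{mn}.$$
Comparing with (\ref{normalization-ops-a}), (\ref{eq:Laguerrenormalization}), (\ref{orthgonality-equation-finite-jacobi}), we see that $\tilde{Q}_{k,N}/\sqrt{\kappa_N}$ and the respective $Q_k^{(N)}$, $Q_k^{(N,\nu)}$, $Q_l^{(\alpha,\beta,N)}$ satisfy the same discrete orthonormality; since both sides have positive leading coefficients (for $\tilde{Q}_{k,N}$ this follows from $b_n > 0$ in Lemma~\ref{orthonormal-generic-3-term}), they are equal. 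Substituting into Theorems \ref{ev-a}, \ref{ev-b}, \ref{ev-main} then produces exactly the vectors in the statement, and their orthonormality is built into (\ref{dual-orthogonality}).

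The main obstacle will be the Jacobi bookkeeping: the factor $\kappa_N^{(\alpha,\beta)} = 4N(N+\alpha)(N+\beta)(N+\alpha+\beta)/((2N+\alpha+\beta)^2(2N+\alpha+\beta-1))$ is not something that drops out of a single identity, but rather assembles from the Jacobi $b_N$ (involving the ratio of squared $L^2$-norms of consecutive $P_n^{(\alpha,\beta)}$, which carries the $2N+\alpha+\beta$ factors in its denominator) together with the normalizing constant from the derivative identity $(1-x^2)P_N^{(\alpha,\beta)\prime}(x)$. No deep new input is needed; the difficulty is purely algebraic. Once this is checked, the Hermite and Laguerre identifications fall out cleanly and the eigenvector statement follows uniformly for all three ensembles.
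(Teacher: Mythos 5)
Your proposal is correct and follows essentially the same route as the paper's proof: compute the dual Christoffel numbers $w_i^*=\pi(z_{i,N})/\kappa_N$ from the derivative identities at the zeros via \eqref{dual-Christoffel}, match the resulting discrete orthogonality of $(\tilde{Q}_{k,N})$ against \eqref{normalization-ops-a}, \eqref{eq:Laguerrenormalization}, \eqref{orthgonality-equation-finite-jacobi} to identify $\tilde{Q}_{k,N}/\sqrt{\kappa_N}$ with the polynomials of Sections~2--3, and then invoke Theorems \ref{ev-a}, \ref{ev-b}, \ref{ev-main}. The only cosmetic difference is that you work with the classically normalized identities (tracking the Laguerre sign convention explicitly), whereas the paper uses the monic derivative identities, for which $w_i^*=\hat{P}_{N-1}(z_{i,N})/\hat{P}_N'(z_{i,N})$ directly.
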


\begin{proof}
We first consider the Hermite case. Here
$\hat{H}^\prime_{N}(x)=N\hat{H}_{N-1}(x)$ by Section 5.5 of
 \cite{S}. Hence, by
(\ref{dual-Christoffel}) and (\ref{dual-orthogonality}),
\begin{align}\label{normalization-ops-a-dual}
\sum_{i=1}^N\frac{1}{N}\tilde{Q}_{m,N}(z_{i,N})\tilde{Q}_{n,N}(z_{i,N})=\delta_{mn}.
\end{align}
If we compare this with the orthogonality (\ref{normalization-ops-a}) of the
polynomials $Q_n^{(N)}$ from Section~2,
 we conclude from Theorem \ref{ev-a} that the vectors
\begin{align}
\frac{1}{\sqrt{N}}(\tilde{Q}_{j-1,N}(z_{1,N}),\ldots,\tilde{Q}_{j-1,N}(z_{N,N}))^T
\end{align}
for $j=1,\ldots,N$ are orthonormal eigenvectors of $S_N$. 

We now turn to the Laguerre  case. By Section 4.6 of \cite{I} the monic Laguerre
polynomials satisfy 
\begin{align}
x\hat{L}^{(\alpha)\prime}_n(x)=n\hat{L}^{(\alpha)}_n(x)+n(n+\alpha)\hat{L}^{(\alpha)}_{n-1}(x).
\end{align}
In particular,
 $z_{i,N}^{(\alpha)}\hat{L}^{(\alpha)\prime}_N(z_{i,N}^{(\alpha)})=N(N+\alpha)\hat{L}^{(\alpha)}_{N-1}(z_{i,N}^{(\alpha)})$.
This, (\ref{dual-Christoffel}), and  (\ref{dual-orthogonality})
yield 
\begin{align}\label{normalization-ops-b-dual}
\sum_{i=1}^N\frac{z_{i,N}^{(\alpha)}}{N(N+\alpha)}\tilde{Q}_{m,N}^{(\alpha)}(z_{i,N}^{(\alpha)})\cdot \tilde{Q}_{n,N}^{(\alpha)}(z_{i,N}^{(\alpha)})=\delta_{mn}.
\end{align}
If we compare this with the orthogonality (\ref{eq:Laguerrenormalization}) of the
polynomials 
$Q_{k}^{(N,\nu)}$ with $\alpha=\nu-1$ in Section 3, we conclude from Theorem
\ref{ev-b} that the vectors
\begin{align}
(\sqrt{z_{1,N}^{(\alpha)}}&{Q}_{j-1,N}^{(\alpha)}(z_{1,N}^{(\alpha)}),\ldots,\sqrt{z_{N,N}^{(\alpha)}}{Q}_{j-1,N}^{(\alpha)}(z_{N,N}^{(\alpha)}))\notag\\
=&\frac{1}{\sqrt{N(N+\alpha)}}(\sqrt{z_{1,N}^{(\alpha)}}\tilde{Q}_{j-1,N}^{(\alpha)}(z_{1,N}^{(\alpha)}),\ldots,\sqrt{z_{N,N}^{(\alpha)}}\tilde{Q}_{j-1,N}^{(\alpha)}(z_{N,N}^{(\alpha)})).
\end{align}
for $j=1,\ldots,N$ are orthonormal eigenvectors of $S_N$.

Finally, the monic Jacobi polynomials $\hat R_N:= \hat{P}^{(\alpha,\beta)}_N$ satisfy
$$(1-(z_{i,N}^{(\alpha,\beta)})^2)\hat R_N^\prime(z_{i,N}^{(\alpha,\beta)})
=\frac{4N(N+\alpha)(N+\beta)(N+\alpha+\beta)}{(2N+\alpha+\beta)^2(2N+\alpha+\beta-1)}\hat R_N(z_{i,N}^{(\alpha,\beta)}).$$
This, (\ref{dual-Christoffel}),  and (\ref{dual-orthogonality}) show that
\begin{align}
\sum_{i=1}^N\frac{(1-(z_{i,N}^{(\alpha,\beta)})^2)(2N+\alpha+\beta)^2(2N+\alpha+\beta-1)}{4N(N+\alpha)(N+\beta)(N+\alpha+\beta)}&\tilde{Q}_{m,N}^{(\alpha,\beta)}(z_{i,N}^{(\alpha,\beta)})\tilde{Q}_{n,N}^{(\alpha,\beta)}(z_{i,N}^{(\alpha,\beta)})\notag\\
&=\delta_{mn}.
\end{align}
On the other hand, in Section 3 we imposed the following condition on ${Q}^{(\alpha,\beta)}_{m,N}(x)$:
\begin{align}
\sum_{i=1}^N(1-(z_{i,N}^{(\alpha,\beta)})^2){Q}_{m,N}^{(\alpha,\beta)}(z_{i,N}^{(\alpha,\beta)}){Q}_{n,N}^{(\alpha,\beta)}(z_{i,N}^{(\alpha,\beta)})=\delta_{mn}.
\end{align}
We thus conclude that the $i$-th component of the $j$-th eigenvector is equal to
\begin{align}
\sqrt{1-(z_{i,N}^{(\alpha,\beta)})^2}&{Q}^{(\alpha,\beta)}_{j-1,N}(z_{i,N}^{(\alpha,\beta)})\\
&=\sqrt{\frac{(1-(z_{i,N}^{(\alpha,\beta)})^2)(2N+\alpha+\beta)^2(2N+\alpha+\beta-1)}{4N(N+\alpha)(N+\beta)(N+\alpha+\beta)}}\tilde{Q}^{(\alpha,\beta)}_{j-1,N}(z_{i,N}^{(\alpha,\beta)}).\notag
\end{align}
This completes the proof.
\end{proof}

\begin{remark}
Notice that the comparison of the orthogonality  relations
(\ref{normalization-ops-a-dual}) and 
(\ref{normalization-ops-a}) in the Hermite case in the proof above yields that for all $k=0,\ldots,N-1$,
$\tilde{Q}_{k,N}(x)=\sqrt N \cdot {Q}_{k}^{(N)}(x)$.
This leads to a new  
proof of Proposition  \ref{three-term-dual-Hermite}.

In a similar way, the orthogonality  relations (\ref{normalization-ops-b-dual}) and
(\ref{eq:Laguerrenormalization}) yield
that for all $k=0,\ldots,N-1$, and $\alpha=\nu-1$, we have
$\tilde{Q}_{k,N}^{(\alpha)}= \sqrt{N(N+\alpha)}\cdot {Q}_{k}^{(N,\nu)}$.
 This leads to the three-term-recurrence for the polynomials $({Q}_{k}^{(N,\nu)})_{k=0,\ldots,N-1}$.
Moreover, a corresponding result is available in the Jacobi case.
\end{remark}

\begin{remark}
Notice that by the proof of Lemma \ref{general-eigenvectors}
in the Hermite, Laguerre, and Jacobi case the dual Christoffel numbers from
(\ref{dual-Christoffel}) have the form
\begin{align}
w_i^*=\frac{\hat{P}_{N-1}(z_{i,N})}{\hat{P}^\prime_{N}(z_{i,N})}=\frac{\pi(z_{i,N})}{\kappa_N}
\end{align}
with suitable constants $\kappa_N$ and polynomials $\pi$ of degrees 0,1, and 2
respectively. By \cite{VZ}, such simple relations for the dual Christoffel numbers are available  only
for the classical orthogonal polynomials. This also includes the Bessel polynomials which are limits of Jacobi polynomials;
see \cite[p. 124, (4.10.10) and (4.10.13)]{I}.
\end{remark}

In the next step we use the preceding results on dual orthogonal polynomials to compute the covariance matrices 
 $\Sigma_N$ from their inverses.
For this we write the 
recurrence \eqref{normal-generic-3-term} for general orthonormal polynomials $(\tilde{P}_{n})_{n\ge0}$ for $n\le N$  at the $N$ ordered
zeros $z_{i,N}$ of $\tilde{P}_{N}$ as
 the
eigenvalue equation
\begin{align}
        \left(\begin{array}{ccccc}
        a_0 & b_1 & & & \\
        b_1 & a_1 & b_2 & & \\
        & b_2 & \ddots & \ddots & \\
        & & \ddots & a_{N-2} & b_{N-1}\\
        & & & b_{N-1} & a_{N-1}
        \end{array}\right)
        \left(\begin{array}{c}
        \tilde{P}_{0}(z_{i,N})\\
        \tilde{P}_{1}(z_{i,N})\\
        \tilde{P}_{2}(z_{i,N})\\
        \vdots\\
        \tilde{P}_{N-1}(z_{i,N})
        \end{array}\right)=
        z_{i,N}\left(\begin{array}{c}
        \tilde{P}_{0}(z_{i,N})\\
        \tilde{P}_{1}(z_{i,N})\\
        \tilde{P}_{2}(z_{i,N})\\
        \vdots\\
        \tilde{P}_{N-1}(z_{i,N})
        \end{array}\right)
\notag\end{align}
of an $N\times
N$-dimensional matrix. The zeros $\{z_{i,N}\}_{i=1}^N$ are the eigenvalues of this symmetric matrix and are distinct; this yields that the eigenvectors of this matrix are orthogonal and unique up to a constant coefficient. On the other hand, Lemma~\ref{orthonormal-generic-3-term} and Remark~\ref{remark-orthonormal-recurrence-upper-bound} show that
\begin{align}
        (\tilde{Q}_{N-1,N}(z_{i,N}),\ldots,\tilde{Q}_{0,N}(z_{i,N}))^T
\end{align}
is also an eigenvector of this matrix for the eigenvalue $z_{i,N}$. It
follows that
\[\left(\begin{array}{c}
\tilde{P}_{0}(z_{i,N})\\
\tilde{P}_{1}(z_{i,N})\\
\tilde{P}_{2}(z_{i,N})\\
\vdots\\
\tilde{P}_{N-1}(z_{i,N})
\end{array}\right)=
c_{i,N}\left(\begin{array}{c}
\tilde{Q}_{N-1,N}(z_{i,N})\\
\tilde{Q}_{N-2,N}(z_{i,N})\\
\tilde{Q}_{N-3,N}(z_{i,N})\\
\vdots\\
\tilde{Q}_{0,N}(z_{i,N})
\end{array}\right),\]
with a constant $c_{i,N}\ne 0$. The last row of this equation and $\tilde{Q}_{0,N}(x)=1$ give
\begin{equation}\label{cifirstexpression}
c_{i,N}=\tilde{P}_{N-1}(z_{i,N}).
\end{equation}
We remark that $c_{i,N}$ usually has the sign $(-1)^{N-i}$. This follows  from the well-known intelacing property of the zeros of  $\tilde{P}_{N-1}(x)$ and $\tilde{P}_{N}(x)$ together with the assumption that the leading coefficient of   $\tilde{P}_{N-1}(x)$ is positive.
This assumption holds for the Hermite and Jacobi cases. The Laguerre case will be handled below.

The constants $c_{i,N}$ can be also determined from an eigenvalue equation for the inverse matrices  $S_N=\Sigma_N^{-1}$ 
for our random matrix ensembles. In fact, as the vectors in Lemma~\ref{general-eigenvectors} form an orthogonal
matrix in each of the cases considered there, we see that all rows and all columns of that matrix are orthogonal. Hence,
\[\frac{\sqrt{\pi(z_{i,N})\pi(z_{k,N})}}{\kappa_N
c_{i,N}c_{k,N}}\sum_{j=0}^{N-1}\tilde{P}_j(z_{i,N})\tilde{P}_j(z_{k,N})=\delta_{i,k}  \quad\text{for all}\quad 1\leq i,k\leq N.\]
In particular, for $i=k$,
\begin{equation}
c_{i,N}=\pm\sqrt{\frac{\pi(z_{i,N})}{\kappa_N}\sum_{j=0}^{N-1}\tilde{P}_j^2(z_{i,N})} \quad 1\leq i\le N.
\end{equation}
Using the sign of  $c_{i,N}$ above, we conclude that in the Hermite and Jacobi cases
\begin{equation}\label{cisecondexpression}
        c_{i,N}=(-1)^{N-i}\sqrt{\frac{\pi(z_{i,N})}{\kappa_N}\sum_{j=0}^{N-1}\tilde{P}_j^2(z_{i,N})}.
\end{equation}

In the Laguerre case,  the leading coefficient of  $L_{N-1}^{(\alpha)}(x)$  has the sign $(-1)^{N-1}$. In this case, we obtain
\begin{equation}\label{cilastexpression}
        c_{i,N}^{(\alpha)}=(-1)^{i-1}\sqrt{\frac{\pi^{(\alpha)}(z_{i,N}^{(\alpha)})}{\kappa_N^{(\alpha)}}\sum_{j=0}^{N-1}(\tilde{L}_j^{(\alpha)}(z_{i,N}))^2}.
\end{equation}
These observations now lead to the following representation of  $\Sigma_N$:

\begin{theorem}\label{covariance-matrix-general}
    For the Hermite and
Laguerre  cases, the covariance matrices $\Sigma_N=(\sigma^N_{i,j})_{i,j=1,\ldots,N}$ are given with the 
notations of Lemma \ref{general-eigenvectors} and with the eigenvalues $\lambda_k$ from the Theorems \ref{ev-a}, \ref{ev-b}, and \ref{ev-main} by
        \begin{align}\label{covarmatwithchristoffel}
       \sigma^N_{i,j} &=\frac{\sqrt{\pi(z_{i,N})\pi(z_{j,N})}}{\kappa_N\tilde{P}_{N-1}(z_{i,N})\tilde{P}_{N-1}(z_{j,N})}
\sum_{k=0}^{N-1}\frac{\tilde{P}_{k}(z_{i,N})\tilde{P}_{k}(z_{j,N})}{\lambda_{N-k}} \notag\\
   &=\frac{(-1)^{i+j}}{\sqrt{\sum_{k,l=0}^{N-1}\tilde{P}_k^2(z_{i,N})\tilde{P}_l^2(z_{j,N})}}
\sum_{k=0}^{N-1}\frac{\tilde{P}_{k}(z_{i,N})\tilde{P}_{k}(z_{j,N})}{\lambda_{N-k}}.
        \end{align}
Moreover, a corresponding result holds in the trigonometric Jacobi case for the covariance matrices
 $\tilde\Sigma_N=(\tilde\sigma^N_{i,j})_{i,j=1,\ldots,N}$. 
\end{theorem}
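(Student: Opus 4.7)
The plan is spectral assembly from the pieces already in place. Since $S_N=\Sigma_N^{-1}$ has been orthogonally diagonalized in Lemma~\ref{general-eigenvectors}, inverting the eigenvalues immediately yields $\Sigma_N$. Concretely, writing $T_N$ for the orthogonal matrix whose $n$-th column is the orthonormal eigenvector from Lemma~\ref{general-eigenvectors} (built from $\tilde{Q}_{n-1,N}$) with eigenvalue $\lambda_n$, the identity $\Sigma_N = T_N\operatorname{diag}(\lambda_1^{-1},\ldots,\lambda_N^{-1})T_N^T$ gives
\[
\sigma^N_{i,j} = \frac{\sqrt{\pi(z_{i,N})\pi(z_{j,N})}}{\kappa_N}\sum_{n=1}^{N}\frac{\tilde{Q}_{n-1,N}(z_{i,N})\,\tilde{Q}_{n-1,N}(z_{j,N})}{\lambda_n}.
\]

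Next, I would translate the dual polynomials back into the original orthonormal polynomials through the vector identity $\tilde{P}_k(z_{i,N}) = c_{i,N}\,\tilde{Q}_{N-1-k,N}(z_{i,N})$ established just before the statement. Reindexing with $k=N-n$ and dividing by $c_{i,N}c_{j,N}$ converts the display above into
\[
\sigma^N_{i,j} = \frac{\sqrt{\pi(z_{i,N})\pi(z_{j,N})}}{\kappa_N\,c_{i,N}c_{j,N}}\sum_{k=0}^{N-1}\frac{\tilde{P}_k(z_{i,N})\,\tilde{P}_k(z_{j,N})}{\lambda_{N-k}}.
\]
The first displayed formula of the theorem then follows at once by inserting $c_{i,N} = \tilde{P}_{N-1}(z_{i,N})$ from \eqref{cifirstexpression}.

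For the second displayed formula I would instead use the signed square-root expression for $c_{i,N}$ from \eqref{cisecondexpression} in the Hermite and Jacobi cases, and from \eqref{cilastexpression} in the Laguerre case. In both cases the product $c_{i,N}c_{j,N}$ carries the global sign $(-1)^{i+j}$, because $(-1)^{(N-i)+(N-j)}=(-1)^{2N-i-j}=(-1)^{i+j}$ and likewise $(-1)^{(i-1)+(j-1)}=(-1)^{i+j}$, while the absolute values cancel the $\sqrt{\pi(z_{i,N})\pi(z_{j,N})}/\kappa_N$ prefactor precisely against the square-root denominator $\sqrt{\sum_{k,l=0}^{N-1}\tilde{P}_k^2(z_{i,N})\tilde{P}_l^2(z_{j,N})}$. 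The trigonometric Jacobi case is handled identically, with $\tilde P_k$, $\pi(x)=1-x^2$ and $\kappa_N^{(\alpha,\beta)}$ in place of the Hermite objects; since the leading coefficient of $\tilde{P}_{N-1}^{(\alpha,\beta)}$ is positive, the sign convention of \eqref{cisecondexpression} applies verbatim.

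The computation is essentially bookkeeping once the preceding material is in hand. The only two points requiring a little care are the index reversal $n\leftrightarrow N-1-k$ inherited from de Boor--Saff duality, and the reconciliation of the two sign conventions for $c_{i,N}$ between the Hermite/Jacobi and Laguerre families; no further structural input is required beyond Lemma~\ref{general-eigenvectors} and the identifications \eqref{cifirstexpression}--\eqref{cilastexpression}.
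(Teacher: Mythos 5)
Your proposal is correct and follows essentially the same route as the paper: diagonalize $\Sigma_N$ via the orthonormal eigenvectors of Lemma \ref{general-eigenvectors}, convert $\tilde{Q}_{n-1,N}(z_{i,N})$ to $\tilde{P}_{N-n}(z_{i,N})/c_{i,N}$, reindex, and then substitute \eqref{cifirstexpression} for the first displayed formula and the signed expressions \eqref{cisecondexpression}--\eqref{cilastexpression} for the second. The sign bookkeeping $(-1)^{(N-i)+(N-j)}=(-1)^{(i-1)+(j-1)}=(-1)^{i+j}$ and the cancellation of the prefactor against the square-root denominator are exactly as in the paper's argument.
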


\begin{proof}
        In all cases, 
 \begin{equation}
        T_N^T\Sigma_N T_N=\operatorname{diag}( \lambda_{1}^{-1},\ldots,\lambda_{N}^{-1}),
        \end{equation}
where the orthogonal matrix $T$ has entries
        \begin{align}
        [T_N]_{i,j}={Q}_{j-1}^{(N)}(z_{i,N})&=\sqrt{\frac{\pi(z_{i,N})}{\kappa_N}}\tilde{Q}_{j-1,N}(z_{i,N})\notag\\
        &=\frac{1}{c_{i,N}}\sqrt{\frac{\pi(z_{i,N})}{\kappa_N}}\tilde{P}_{N-j}(z_{i,N}).
        \end{align}
Hence,
\begin{align}
     \sigma^N_{i,j}=\frac{\sqrt{\pi(z_{i,N})\pi(z_{j,N})}}{\kappa_Nc_{i,N}c_{j,N}}\sum_{k=0}^{N-1}\frac{\tilde{P}_{N-1-k}(z_{i,N})\tilde{P}_{N-1-k}(z_{j,N})}{\lambda_{k+1}}.
        \end{align}
       The substitution $N-1-k\to k$ and
\eqref{cifirstexpression}, \eqref{cisecondexpression}, and \eqref{cilastexpression} yield the result.
\end{proof}

For the convenience of the reader, we restate this result in the Hermite case in terms of the classical Hermite polynomials $H_k$.
We point out that this result was recently also shown by  Gorin and Kleptsyn [GK] by completely different methods.

\begin{corollary}
	In the Hermite case, the covariance matrices $\Sigma_{N}^H=(\sigma^{N,H}_{i,j})_{i,j=1,\ldots,N}$ satisfy
	\begin{align*}
		\sigma^{N,H}_{i,j}=(-1)^{i+j}\left(\sum_{k=0}^{N-1}\frac{(H_k (z_{i,N}))^2}{2^kk!}\sum_{l=0}^{N-1}\frac{(H_l(z_{j,N}))^2}{2^ll!}\right)^{-1/2}
		\sum_{k=0}^{N-1}\frac{H_k(z_{i,N})H_k(z_{j,N})}{2^kk!(N-k)}.
	\end{align*}
\end{corollary}

   The formulas for the entries of the covariance matrices $\Sigma_N$  in (\ref{covarmatwithchristoffel}) 
should be compared with the corresponding results of Dumitriu and Edelman \cite{DE2} for Hermite and Laguerre ensembles.
In the Hermite case, the entries of  $\Sigma_N$  in (\ref{covarmatwithchristoffel}) must be equal to that in (\ref{covariance-a-de})
in Theorem \ref{clt-main-a-DE}. As already pointed out in the introduction, we are not able to verify the equivalence of (\ref{covarmatwithchristoffel}) and 
(\ref{covariance-a-de}) for arbitrary $N$. For small $N$, we checked the equality by a numerical computation.
We thus state  $(\ref{covarmatwithchristoffel})=(\ref{covariance-a-de})$
as a corollary for the orthonormal Hermite poynomials $\tilde{H}_k$.

\begin{corollary}\label{equality-of-variances-h} For $i,j=1,\ldots,N$, the zeros of $\tilde{H}_N$ satisfy
 \begin{align*}
 (-1)^{i+j}\left(\sum_{k=0}^{N-1}{(\tilde{H}_k (z_{i,N}))^2}{}\sum_{l=0}^{N-1}{(\tilde{H}_l(z_{j,N}))^2}\right)^{1/2}
 \sum_{k=0}^{N-1}\frac{\tilde{H}_k(z_{i,N})\tilde{H}_k(z_{j,N})}{N-k}\\
 = \sum_{l=0}^{N-1} \tilde H_l^2(z_{i,N}) \tilde H_l^2(z_{j,N})
 	+ \sum_{l=0}^{N-2}\tilde H_{l+1}(z_{i,N}) \tilde H_l(z_{i,N}) \tilde
 	H_{l+1}(z_{j,N})\tilde H_{l}(z_{j,N}).
 \end{align*}
\end{corollary}

In our opinion, our representation in (\ref{covarmatwithchristoffel}) is slightly nicer than   (\ref{covariance-a-de})
 in the Hermite case.
In the Laguerre case we have a corresponding picture. However, here our formula 
(\ref{covarmatwithchristoffel}) has the same structure as in the Hermite case, while the corresponding formula in \cite{DE2} is much more involved. In the Jacobi case, there do not exist formulas for the entries of $\Sigma_N$ in the literature as far as we are aware.

All preceding results for  $\beta$-Jacobi ensembles were stated in trigonometric coordinates as only in this case the eigenvalues and eigenvectors of the
(inverse) covariance 
matrices of the limit are known; see  Theorem \ref{ev-main}.
 On the other hand, all  trigonometric results above can be easily transfered 
to classical $\beta$-Jacobi ensembles. We briefly collect these results here.
We  follow \cite{F, K, KN, Me, HV} and consider  the
  $\beta$-Jacobi  random matrix ensembles for $k_1,k_2,k_3\ge0$ 
with the joint eigenvalue distributions $\mu_{(k_1,k_2,k_3)}$
with the  densities
\begin{equation}\label{joint-density}
c_{k_1,k_2,k_3}\prod_{1\leq i< j \leq N}\left(x_j-x_i\right)^{k_3}\prod_{i=1}^N
\left(1-x_i\right)^{\frac{k_1+k_2}{2}-\frac{1}{2}}\left(1+x_i\right)^{\frac{k_2}{2}-\frac{1}{2}}
\end{equation}
 on the  alcoves 
$A:=\{x\in\R^N: \> -1\leq x_1\leq ...\leq x_N\leq 1\}$
with some  Selberg constant $c_{k_1,k_2,k_3}>0$.
As in Section 3 we write
$(k_1,k_2,k_3)=\kappa\cdot(a,b,1)$ with
$a\ge0$ $b>0$  fixed and $\kappa\to\infty$. We put $\alpha:=a+b-1>-1$, $\beta=b-1>-1$, and
consider the vector $z:=(z_1, \ldots, z_N)\in A$ consisting of the ordered
zeros of the Jacobi polynomial $P_N^{(\alpha,\beta)})$. Using the 
transformation
$$T: \tilde A\longrightarrow A, \quad T(t_1,\ldots,t_N):=(\cos(2t_1),\ldots,
\cos(2t_N)),$$
the WLT \ref{theoremCLT-transformed} then reads as
follows by \cite{HV}.

\begin{theorem}\label{jacobi-theoremCLT}
Let $a\ge0$ and $b>0$.
Let $X_\kappa$ be random variables with the distributions $\mu_{\kappa\cdot(a,b,1)}$
 as above. 
 Then $\sqrt{\kappa}(X_\kappa-z)$
converges for $\kappa\rightarrow\infty$ to the  normal distribution $N(0,\Sigma_N)$
with some
 regular covariance matrix $\Sigma_N$ whose inverse  
 $\Sigma_N^{-1}=:S_N=(s_{i,j})_{i,j=1,...,N}$ is given by
\begin{align*}
s_{i,j}=
\begin{cases}\sum_{l=1,\ldots,N; l\ne j}
\frac{1}{(z_j-z_l)^2}+\frac{a+b}{2}\frac{1}{(1-z_j)^2}+\frac{b}{2}\frac{1}{(1+z_j)^2}
&\textit{ for }i=j\\
\frac{-1}{(z_i-z_j)^2}&\textit{ for }i\neq j
\end{cases}.
\end{align*}
\end{theorem}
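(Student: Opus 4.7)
The plan is to deduce Theorem \ref{jacobi-theoremCLT} from Theorem \ref{theoremCLT-transformed} via the delta method applied to the coordinate transformation $T$. Note that $X_\kappa = T(\tilde X_\kappa)$ and $z = T(\tilde z)$, and that $T$ is smooth on a neighbourhood of $\tilde z$ since each entry $\tilde z_i = \frac{1}{2}\arccos z_i$ lies in the interior of $[0,\pi/2]$ because $-1<z_i<1$ (the zeros of $P_N^{(\alpha,\beta)}$ lie strictly inside the orthogonality interval). Since Theorem \ref{theoremCLT-transformed} asserts $\sqrt{\kappa}(\tilde X_\kappa - \tilde z)\to N(0,\tilde\Sigma_N)$ in distribution, in particular $\tilde X_\kappa \to \tilde z$ in probability, so the standard Taylor/delta-method argument gives
\begin{equation*}
\sqrt{\kappa}(X_\kappa - z) = \sqrt{\kappa}\bigl(T(\tilde X_\kappa) - T(\tilde z)\bigr) \longrightarrow J\cdot N(0,\tilde\Sigma_N) = N(0,\,J\tilde\Sigma_N J^T)
\end{equation*}
in distribution, where $J$ denotes the Jacobian of $T$ at $\tilde z$.

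Next I would compute $J$ explicitly. Since $T$ acts coordinatewise via $x_i=\cos(2t_i)$, the Jacobian is diagonal with $(J)_{ii} = -2\sin(2\tilde z_i)$. Using $\cos(2\tilde z_i)=z_i$ and $\sin(2\tilde z_i)=\sqrt{1-z_i^2}>0$, we obtain
\begin{equation*}
J = -2\,\operatorname{diag}\bigl(\sqrt{1-z_1^2},\ldots,\sqrt{1-z_N^2}\bigr), \qquad J^{-1} = -\tfrac{1}{2}\operatorname{diag}\bigl((1-z_i^2)^{-1/2}\bigr).
\end{equation*}
Thus $\Sigma_N = J\tilde\Sigma_N J^T$ is regular and its inverse is
\begin{equation*}
S_N = (J^T)^{-1}\tilde S_N J^{-1}, \qquad s_{i,j} = \frac{\tilde s_{i,j}}{4\sqrt{(1-z_i^2)(1-z_j^2)}}.
\end{equation*}

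Finally I would substitute the explicit formula for $\tilde s_{i,j}$ from Theorem \ref{theoremCLT-transformed}. For $i\ne j$ this gives directly $s_{i,j}=-1/(z_i-z_j)^2$. For $i=j$, the three summands of $\tilde s_{i,i}$ simplify using $1-z_i^2=(1-z_i)(1+z_i)$: the first becomes $\sum_{l\ne i}(z_i-z_l)^{-2}$, the second becomes $\frac{a+b}{2}(1-z_i)^{-2}$, and the third becomes $\frac{b}{2}(1+z_i)^{-2}$, matching the claimed expression. This chain of elementary manipulations completes the proof, and there is no essential obstacle — the only point requiring mild care is ensuring that the delta method applies, i.e.\ that $T$ is $C^1$ near $\tilde z$ with invertible Jacobian, which holds because the zeros $z_i$ of $P_N^{(\alpha,\beta)}$ are strictly inside $(-1,1)$ and pairwise distinct.
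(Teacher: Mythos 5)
Your proposal is correct and follows essentially the same route as the paper: the paper simply quotes from \cite{HV} the relation $\tilde S_N = D S_N D$ with $D=\operatorname{diag}(-2\sqrt{1-z_{i,N}^2})$, which is exactly the Jacobian relation you derive via the delta method, and your algebraic reduction of $\tilde s_{i,j}$ to $s_{i,j}$ checks out in all cases. The only point you assert without comment is that $X_\kappa\stackrel{d}{=}T(\tilde X_\kappa)$, i.e.\ that the change of variables matches the two densities (\ref{density-joint-trig}) and (\ref{joint-density}); this is built into the paper's setup and is easily verified.
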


The inverse covariance matrices  $\tilde\Sigma_N^{-1}$ and  $\Sigma_N^{-1}$ from the WLTs
\ref{theoremCLT-transformed} and \ref{jacobi-theoremCLT} are related by
$\tilde S=DSD$ with the diagonal matrix 
$$D=\operatorname{diag}{\left(-2\sqrt{1-z_{1,N}^2},\ldots,-2\sqrt{1-z_{N,N}^2}\right)}$$
by \cite{HV}. Hence, Theorem \ref{covariance-matrix-general} means in the non-trigonometric Jacobi case:

\begin{theorem}\label{covariance-matrix-general-non-trigonometric }
   The covariance matrix $\Sigma_N=(\sigma^N_{i,j})_{i,j=1,\ldots,N}$ in Theorem \ref{jacobi-theoremCLT} has entries
\begin{align}\label{covarmatwithchristoffel-non-trigonometric}
       \sigma^N_{i,j} =\frac{(-1)^{i+j}4 \sqrt{1-z_{i,N}^2}\sqrt{1-z_{j,N}^2}}{\sqrt{\sum_{k,l=0}^{N-1}(\tilde{P}_k^{(\alpha,\beta)}(z_{i,N})\tilde{P}_l^{(\alpha,\beta)}(z_{j,N}))^2}}
\sum_{k=0}^{N-1}\frac{\tilde{P}_{k}^{(\alpha,\beta)}(z_{i,N})\tilde{P}_{k}^{(\alpha,\beta)}(z_{j,N})}{\lambda_{N-k}},
        \end{align}
        with $\lambda_k$ as specified in Theorem~\ref{ev-main}.
\end{theorem}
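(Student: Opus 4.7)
The plan is to reduce the statement directly to the trigonometric Jacobi case already handled in Theorem \ref{covariance-matrix-general}. Starting from the relation $\tilde S_N = D S_N D$ noted just before the statement, where $D = \operatorname{diag}\bigl(-2\sqrt{1-z_{1,N}^2},\ldots,-2\sqrt{1-z_{N,N}^2}\bigr)$, I invert both sides using the symmetry and invertibility of $D$ to obtain $\Sigma_N = D\,\tilde\Sigma_N\,D$. Since $D$ is diagonal, this translates immediately into the entrywise identity
\[
\sigma^N_{i,j} \;=\; D_{ii}\,D_{jj}\,\tilde\sigma^N_{i,j} \;=\; 4\sqrt{(1-z_{i,N}^2)(1-z_{j,N}^2)}\,\tilde\sigma^N_{i,j},
\]
where the two factors of $-2$ coming from $D$ combine to give the overall $+4$.

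Next, I would apply Theorem \ref{covariance-matrix-general} in the (trigonometric) Jacobi case, for which $\pi^{(\alpha,\beta)}(x)=1-x^2$ by Lemma \ref{general-eigenvectors}. In the second form displayed in that theorem, the factors $\sqrt{\pi(z_{i,N})\pi(z_{j,N})}$ have already cancelled against the normalization constants appearing in $c_{i,N}$, so that one obtains
\[
\tilde\sigma^N_{i,j} \;=\; \frac{(-1)^{i+j}}{\sqrt{\sum_{k,l=0}^{N-1}(\tilde P_k^{(\alpha,\beta)}(z_{i,N}))^2(\tilde P_l^{(\alpha,\beta)}(z_{j,N}))^2}} \sum_{k=0}^{N-1}\frac{\tilde P_k^{(\alpha,\beta)}(z_{i,N})\,\tilde P_k^{(\alpha,\beta)}(z_{j,N})}{\lambda_{N-k}},
\]
with the eigenvalues $\lambda_{N-k}$ taken from Theorem \ref{ev-main}. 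The sign pattern $(-1)^{i+j}$ is the same as in the Hermite case, since the Jacobi polynomials $P_N^{(\alpha,\beta)}$ have positive leading coefficient, so $c_{i,N}$ carries the sign $(-1)^{N-i}$ and the two signs multiply to $(-1)^{(N-i)+(N-j)}=(-1)^{i+j}$.

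Combining the two displays above gives precisely \eqref{covarmatwithchristoffel-non-trigonometric}. There is no real obstacle: the proof is an orchestration of ingredients already at hand, namely the diagonal similarity $\tilde S_N = DS_ND$ established in \cite{HV} and the dual-orthogonal-polynomial computation of Theorem \ref{covariance-matrix-general}. The only mild care needed is the sign bookkeeping in $c_{i,N}$, where one must observe that the Jacobi case groups with Hermite rather than with Laguerre; this is already covered by the case distinction made in the proof of Theorem \ref{covariance-matrix-general}.
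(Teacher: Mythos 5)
Your proposal is correct and follows essentially the same route as the paper, which presents this theorem as an immediate consequence of the relation $\tilde S_N=DS_ND$ from \cite{HV} (hence $\Sigma_N=D\tilde\Sigma_N D$ entrywise) combined with the trigonometric Jacobi case of Theorem \ref{covariance-matrix-general}. Your sign bookkeeping is also the paper's: the Jacobi case uses \eqref{cisecondexpression} because $P_{N-1}^{(\alpha,\beta)}$ has positive leading coefficient, grouping it with Hermite rather than Laguerre.
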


\section{Limit results for the largest  eigenvalue for $N\to\infty$ in the Hermite case}\label{chapterhermitesoftedge}

In this section we discuss the soft edge statistics in the Hermite case in the freezing regime.
This means that we analyze the limit behavior of the largest eigenvalue in the freezing regime in Theorem \ref{ev-a}
for $N\to\infty$. This will be done on the basis of Theorem \ref{covariance-matrix-general}.
We remark that this problem is also discussed in \cite{DE2} through (\ref{covariance-a-de}). We show
that  our approach via (\ref{covarmatwithchristoffel}) leads to a limit  with a different form from that in \cite{DE2}.

As in Section 2, let now  $z_{1,N}<...,z_{N,N}$ be the the ordered zeros  of the Hermite polynomial $H_N$.
Moreover, for each $N$, let
 $(Q_{k,N})_{k=0,..,N-1}$ be the dual polynomials associated with $(H_k)_{k=0,...,N}$  normalized as in 
(\ref{normalization-ops-a}). This means that $T_N:= (Q_{j-1,N}(z_{i,N}))_{i,j=1,\ldots,N}$ is an orthogonal matrix 
with $T_N^T\Sigma_N
T_N=\operatorname{diag}(1,...,\frac{1}{N})$ as in  the proof of Theorem \ref{covariance-matrix-general}.
These polynomials satisfy the three-term-recurrence
\begin{align}\label{3term}
        xQ_{k,N}(x)=\sqrt{\frac{N-k-1}{2}}Q_{k+1,N}(x)+\sqrt{\frac{N-k}{2}}Q_{k-1,N}(x) \quad(k\le N)
\end{align}
with the initial conditions $Q_{-1,N}=0$ and $Q_{0,N}=\frac{1}{\sqrt{N}}$.

We now derive a limit result for  $N\to\infty$ which involves the Airy function  $\mathsf {Ai}$.
For this we recapitulate some well known facts about  $\mathsf {Ai}$; see e.g.~Section 9 of \cite{NIST} or the monograph \cite{VS}.
 $\mathsf {Ai}$ is the unique solution of
\begin{equation}\label{ode-airy} y^{\prime\prime}(z)=z\cdot y(z) \quad (z\in\mathbb R) \quad\text{with}\quad \lim_{z\to\infty} y(z)=0 
\end{equation}
and with $y(0)=\frac{1}{3^{2/3}\Gamma(2/3)}=0.355028\ldots$.
The Airy function   $\mathsf {Ai}$ has a unique largest zero at
  $a_1=-2.338\ldots$ with $\mathsf {Ai}(z)>0$ for $z> a_1$.
Moreover,  $\mathsf {Ai}$ has infinitely many isolated, simple zeros in $]-\infty,a_1]$. 
 For $r\in\mathbb N$, the $r$-th largest zero $a_r$ of $\mathsf {Ai}$ satisfies 
\begin{equation}\label{airy-zero-asymtot}
a_r\simeq -\Bigl(\frac{3\pi}{2}(r-1/4)\Bigr)^{2/3} \quad\quad\text{for}\quad r\to\infty.
\end{equation}
In addition, we have the asymptotic behavior as $z\to-\infty$
\begin{equation}\label{airy-minus-inf}
\mathsf{Ai}(-z)\simeq\frac{1}{\sqrt{\pi}z^{1/4}}\cos\Big(\frac{2}{3}z^{3/2}-\frac{\pi}{4}\Big),
\end{equation}
as well as
\begin{equation}\label{airy-der-at-zero}
\mathsf{Ai}'(a_r)\simeq\frac{(-1)^{r-1}}{\sqrt{\pi}}\Big(\frac{3\pi}{2}(r-1/4)\Big)^{1/6}\quad\quad\text{for}\quad r\to\infty.
\end{equation}

The following theorem is the central step for our limit results for $N\to\infty$:

\begin{theorem}\label{localconvfn}
        Consider the functions 
        \begin{align*}
        f_N(y):=N^\frac{1}{6}Q_{\lfloor N^\frac{1}{3}y\rfloor,N}(z_{N,N}) \quad\text{for}\quad y\in[0, N^\frac{2}{3}[
        \end{align*}
and  $f_N(y)=0$ otherwise.
        Then $(f_N)_{N\ge1}$ tends for $N\to\infty$ locally uniformly to 
        \begin{align*}
        f(y)=\frac{\mathsf {Ai}(y+a_1)}{\mathsf {Ai}'(a_1)} \quad\quad\text{for}\quad y\in[0,\infty[.
        \end{align*}
\end{theorem}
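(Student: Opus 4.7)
The central tool is the three-term recurrence
\eqref{3term} for $(Q_{k,N})$. Evaluating at $x=z_{N,N}$ and writing
$g_{k,N}:=Q_{k,N}(z_{N,N})$, it becomes
\[
\frac{z_{N,N}}{\sqrt{N/2}}\,g_{k,N}
=\sqrt{1-\tfrac{k+1}{N}}\,g_{k+1,N}+\sqrt{1-\tfrac{k}{N}}\,g_{k-1,N}.
\]
By the Plancherel--Rotach formula \eqref{Planch-rot-r-intro} (with $r=1$),
$z_{N,N}/\sqrt{N/2}=2-|a_1|/N^{2/3}+O(N^{-1})$. Introducing the scaled
variable $y=k/N^{1/3}$ and using the expansions
$\sqrt{1-k/N}=1-y/(2N^{2/3})+O(N^{-4/3})$, I would rearrange the
recurrence as a discrete second-difference equation
\[
g_{k+1,N}-2g_{k,N}+g_{k-1,N}
=\frac{y-|a_1|}{N^{2/3}}\,g_{k,N}+R_{k,N},
\]
with an error $R_{k,N}$ of lower order (as long as
$g_{k+1,N}-g_{k,N}=O(N^{-1/3}g_{k,N})$, which is the regularity I expect
and will justify below). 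Multiplying by $N^{1/3}$ and recalling that the
grid spacing in $y$ is $1/N^{1/3}$, the left side is formally
$N^{-1/3}f_N''(y)\cdot N^{-1/6}$ in the continuum limit, so the scaled
sequence $f_N$ heuristically obeys $f''(y)=(y-|a_1|)f(y)=(y+a_1)f(y)$,
which is precisely the Airy equation satisfied by $\mathsf{Ai}(y+a_1)$.

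Next I would pin down the initial conditions. From the normalisation
$Q_{0,N}=1/\sqrt N$ one gets $f_N(0)=N^{-1/3}\to 0$, matching
$\mathsf{Ai}(a_1)/\mathsf{Ai}'(a_1)=0$ since $a_1$ is the largest zero of
$\mathsf{Ai}$. A direct one-step application of \eqref{3term} with
$Q_{-1,N}=0$ gives $Q_{1,N}(z_{N,N})=z_{N,N}/(\sqrt{N}\sqrt{(N-1)/2})\sim
2/\sqrt N$, hence
\[
f_N(1/N^{1/3})-f_N(0)\sim N^{-1/3},
\]
so that the discrete analogue of $f'(0)$ equals $1$, again matching
$\mathsf{Ai}'(a_1)/\mathsf{Ai}'(a_1)=1$. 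Uniqueness for the Airy
boundary value problem $f''=(y+a_1)f$, $f(0)=0$, $f'(0)=1$ on
$[0,\infty[$ (selecting the decaying solution via the boundedness of
$f_N$, cf.\ \eqref{airy-minus-inf}) then identifies the limit as
$\mathsf{Ai}(y+a_1)/\mathsf{Ai}'(a_1)$.

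To turn this formal scheme into locally uniform convergence, I would
proceed by a discrete Gronwall/compactness argument. The plan is: first
show that $(f_N)$ is uniformly bounded on compact intervals $[0,Y]$ by
iterating the recurrence and using a Gronwall estimate for the
discrete linear second-order equation above, with the known initial
values. This also gives uniform control of the discrete first
difference $N^{1/3}(g_{k+1,N}-g_{k,N})$, which in turn validates the
continuum expansion of $R_{k,N}$. The resulting uniform bounds plus
Arzel\`a--Ascoli (applied to a smooth interpolation of $f_N$) produce
convergent subsequences; any limit satisfies the Airy equation with the
identified initial data and is therefore the function $f$, yielding
convergence of the whole sequence.

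\textbf{Main obstacle.} The delicate step is rigorously controlling the
second-difference approximation: $f_N$ is piecewise constant of step
$1/N^{1/3}$, so "$f''_N$'' is meaningful only after smoothing, and the
$O(N^{-4/3})$ error terms in the expansion of the recurrence must be
shown to remain negligible uniformly on $[0,Y]$ after $O(N^{1/3})$
iterations. Establishing the necessary uniform bounds on both $g_{k,N}$
and $g_{k+1,N}-g_{k,N}$ on this time scale (a discrete stability
estimate for a non-autonomous Airy-type recurrence) is the technical
heart of the argument. An alternative, more analytic route is to use the
identification
$Q_{k,N}(z_{N,N})=\tilde H_{N-1-k}(z_{N,N})/(\sqrt N\,\tilde
H_{N-1}(z_{N,N}))$ coming from Lemma~\ref{general-eigenvectors} and
apply classical Plancherel--Rotach--Airy asymptotics of Hermite
polynomials at the soft edge; this shortcut bypasses the Gronwall
estimate but relies on delicate uniform Plancherel--Rotach bounds in a
shrinking neighbourhood of $\sqrt{2N}$.
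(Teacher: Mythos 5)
Your overall strategy coincides with the paper's: evaluate the three-term recurrence \eqref{3term} at $z_{N,N}$, feed in the Plancherel--Rotach expansion $z_{N,N}/\sqrt{2N}=1-|a_1|/(2N^{2/3})+O(N^{-1})$, recognize the rescaled second difference as a discretization of $f''=(y+a_1)f$ with $f(0)=0$, $f'(0)=1$, and close with Gronwall. (The paper sums the recurrence twice to obtain the integral equation $f_N(y)=\int_0^y\int_0^s(t-|a_1|)f_N(t)\,dt\,ds+y+\operatorname{err}(y,N)$ rather than working with second differences, and then gets the rate $O(N^{-1/3})$ directly from Gronwall applied to $f-f_N$, so no Arzel\`a--Ascoli or compactness step is needed; that difference is cosmetic.)

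The genuine gap is exactly the point you flag as the ``main obstacle'' and then leave open: the uniform a priori control of $q_k=Q_{k,N}(z_{N,N})$ and of the accumulated errors over the $O(N^{1/3})$ steps. You propose to obtain it by iterating the recurrence from the initial data, and you additionally posit the pointwise regularity $q_{k+1}-q_k=O(N^{-1/3}q_k)$, which is neither proved nor actually needed. The ingredient the paper uses instead --- and the one idea missing from your outline --- is the normalization coming from the orthogonality of the eigenvector matrix $T_N=(Q_{j-1,N}(z_{i,N}))_{i,j}$ of Theorem \ref{ev-a}: its last row has unit length, i.e.\ $\sum_{k=0}^{N-1}Q_{k,N}(z_{N,N})^2=1$, equivalently $\int_0^\infty f_N(y)^2\,dy=1$ for every $N$. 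Combined with the positivity $Q_{k,N}(z_{N,N})>0$ (all zeros of $Q_{k,N}$ lie in $]z_{1,N},z_{N,N}[$) and Cauchy--Schwarz, this yields for free the bound $\int_0^y f_N\le\sqrt{M+2}$ on $[0,M]$ and the estimates of every error sum: the $O(N^{-4/3})$ Taylor remainders of $\sqrt{1-k/N}$, the $O(N^{-1})$ Plancherel--Rotach remainder, and the first-difference terms, the last handled by Abel summation rather than by any pointwise bound on $q_{k+1}-q_k$. Without this $L^2$ identity your Gronwall bootstrap must simultaneously control $\sup_k|q_k|$ and the error it feeds back into itself; this may well be doable, but it is precisely the part you have not carried out, so as written the proposal is a correct plan whose decisive estimate is missing. (Your alternative route via $Q_{k,N}(z_{N,N})=\tilde H_{N-1-k}(z_{N,N})/(\sqrt N\,\tilde H_{N-1}(z_{N,N}))$ and uniform soft-edge Hermite asymptotics is plausible but likewise not executed, and the uniformity required in $k$ up to $k\sim yN^{1/3}$ is not an off-the-shelf statement.)
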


We  split the proof   into three lemmas and use the abbreviation 
$q_k:=Q_{k,N}(z_{N,N})$ where we suppress the dependence on $N$. We start with the following result:

\begin{lemma}\label{lemmaODEfN}
        The functions $f_N$ satisfy for $y\in[0,N^\frac{2}{3}[$ the equation
        \begin{align*}
        f_N(y)=\int_{0}^y\int_{0}^{s}(t-|a_1|)f_N(t)\> dt\> ds+y+\operatorname{err}(y,N). 
        \end{align*}
       The error term $\operatorname{err}(y,N)$ is specified in Eq.~(\ref{errorterm1}) at the end of the proof.
\end{lemma}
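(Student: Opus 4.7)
The plan is to turn the three-term recurrence (\ref{3term}) evaluated at $x = z_{N,N}$ into a discrete version of the second-order ODE $f'' = (y-|a_1|)f$ and then integrate it twice (via summation) to produce the claimed integral equation.

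First, I would set $q_k := Q_{k,N}(z_{N,N})$, divide the recurrence by $\sqrt{N/2}$ to obtain
$$\frac{z_{N,N}}{\sqrt{N/2}}\, q_k = \sqrt{\tfrac{N-k-1}{N}}\,q_{k+1} + \sqrt{\tfrac{N-k}{N}}\,q_{k-1},$$
then insert the Plancherel-Rotach expansion $z_{N,N}/\sqrt{N/2} = 2 - |a_1|/N^{2/3} + O(N^{-1})$ from (\ref{Planch-rot-r-intro}) and expand the square roots $\sqrt{1-(k+1)/N}$, $\sqrt{1-k/N}$ to first order. Subtracting $2q_k$ from both sides and rearranging yields the discrete equation
$$q_{k+1} - 2q_k + q_{k-1} = \frac{1}{N^{2/3}}\Bigl(\frac{k}{N^{1/3}} - |a_1|\Bigr)\, q_k + R_k,$$
where $R_k$ collects the subleading terms, i.e.\ the $O(N^{-1})$ piece from $z_{N,N}$ and the quadratic tails $\tfrac{(k+1)^2}{8N^2}q_{k+1}$, $\tfrac{k^2}{8N^2}q_{k-1}$ from the square-root expansions, together with the difference between $\frac{k+1}{2N}q_{k+1}+\frac{k}{2N}q_{k-1}$ and $\frac{k}{N}q_k$. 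This is exactly the finite-difference analog of $f''(y) = (y-|a_1|)f(y)$ under the scaling $k \sim N^{1/3}y$, $q_k \sim N^{-1/6} f(y)$.

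Next I would apply the discrete Abel summation identity
$$q_k - q_0 - k(q_1-q_0) = \sum_{i=1}^{k-1}(k-i)\bigl(q_{i+1} - 2q_i + q_{i-1}\bigr),$$
which is the double-sum counterpart of $f(y)-f(0)-yf'(0)=\int_0^y\int_0^s f''(t)\, dt\, ds$, valid since $q_{-1}=0$. Using $q_0 = 1/\sqrt{N}$ and the value $q_1 = 2/\sqrt{N-1}$ obtained from the $k=0$ case of the recurrence, one computes $k(q_1-q_0) = y\cdot N^{-1/6}+O(N^{-1/2})$ for $k=\lfloor N^{1/3}y\rfloor$. After substituting the discrete ODE into the right-hand side, multiplying through by $N^{1/6}$, and writing the double sum as a Riemann sum with mesh $\Delta t = N^{-1/3}$ and variables $y_i = i/N^{1/3}$, the leading part of the right-hand side becomes
$$\int_0^y (y-t)(t-|a_1|)\, f_N(t)\, dt = \int_0^y \!\!\int_0^s (t-|a_1|)\, f_N(t)\, dt\, ds$$
by Fubini. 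Absorbing into $\operatorname{err}(y,N)$ the boundary contribution $N^{-1/3}$, the Riemann-sum defect, the rounding in $\lfloor N^{1/3}y\rfloor$, the contribution of $N^{1/6}\sum_{i}(k-i) R_i$, and the $O(N^{-1/2})$ correction to $k(q_1-q_0)$ gives the stated identity.

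The bookkeeping of error terms is the main obstacle: one has to verify that every contribution collected in $\operatorname{err}(y,N)$ is genuinely of lower order and not leading order. In particular, the $R_k$ terms involving $q_{k\pm 1}$ weighted by $k^2/N^2$ must be controlled on the range $k\le N^{2/3}$, and the discrepancy between $\tfrac{k+1}{2N}q_{k+1}+\tfrac{k}{2N}q_{k-1}$ and $\tfrac{k}{N}q_k$ must be handled by a further first-difference estimate. At this stage only a formula for $\operatorname{err}(y,N)$ is required; its actual smallness, and the local uniform convergence $f_N\to f$, will be proved in the subsequent lemmas by combining Gronwall-type estimates applied to the integral equation above with crude a priori bounds for $q_k$.
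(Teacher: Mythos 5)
Your proposal follows essentially the same route as the paper: evaluate the recurrence (\ref{3term}) at $z_{N,N}$, insert the Plancherel--Rotach expansion and the Taylor expansion of the square roots to obtain a discrete analogue of $f''=(y-|a_1|)f$, sum twice, pass to a Riemann sum, and sweep the subleading pieces (quadratic tails of the square-root expansion, the first-difference discrepancy from the index shifts, floor-function rounding, and the Riemann-sum defect) into $\operatorname{err}(y,N)$. The only differences are organizational --- you use the discrete Taylor identity with $q_0+k(q_1-q_0)$ where the paper double-telescopes against $q_0-q_{-1}=N^{-1/2}$, and you write the kernel form $\int_0^y(y-t)(t-|a_1|)f_N(t)\,dt$ directly where the paper keeps the double integral until Lemma \ref{lemmalocalconvN13} --- and these are immaterial.
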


\begin{proof} Let $y\ge0$. We divide the recurrence (\ref{3term}) with $x:=z_{N,N}$ by $\sqrt{N}$ and  get 
        \begin{align}\label{3termscaled}
        \sqrt{1-\frac{k+1}{N}}q_{k+1}=\frac{2z_{N,N}}{\sqrt{2N}}q_k-\sqrt{1-\frac{k}{N}}q_{k-1}\quad(k\le N)
        \end{align}
     with $q_{-1}=0$, $q_0=N^{-1/2}$.  We next observe that by the  Lagrange remainder in Taylor's formula,
 for $k=0,...,\lfloor yN^\frac{1}{3}\rfloor$,
\begin{equation}\label{Lagrange} \sqrt{1-\frac{k}{N}}=1-\frac{k}{2N}-\frac{1}{8(1-\xi_{k})^\frac{3}{2}}\left(\frac{k}{N}\right)^2
\quad \text{with}\quad  \xi_k\in (0,\frac{k}{N}).
\end{equation}
               We now  define
        \begin{align*}
        \alpha(k,N):=\frac{1}{8(1-\xi_{k})^\frac{3}{2}}\left(\frac{k}{N}\right)^2
        \end{align*}
        and conclude from (\ref{Lagrange}) that for $k=0,...,\lfloor yN^\frac{1}{3}\rfloor$
        \begin{align}\label{alphaestimates}
        0        <\alpha(k,N)<\left(\frac{N}{N-k}\right)^\frac{3}{2}\left(\frac{k}{N}\right)^2.
        \end{align}
 Moreover, we obtain from
  a sharp  Plancherel-Rotach   theorem of Ricci  \cite{Ric} that
        \begin{equation}\label{Plancherel-Rotach} \frac{z_{N,N}}{\sqrt{2N}}=1-\frac{|a_1|}{2N^\frac{2}{3}}+O(N^{-1}).
\end{equation}
  Using (\ref{Plancherel-Rotach}) we  rewrite the  recurrence    (\ref{3termscaled}) as 
        \begin{align}\label{eq3termapproximated}
        &q_{k+1}-q_k-\left(q_k-q_{k-1}\right)\\=&\frac{k+1}{2N}q_{k+1}-\frac{|a_1|}{N^\frac{2}{3}}q_k+
\frac{k}{2N}q_{k-1}+\alpha(k+1,N)q_{k+1}+\alpha(k,N)q_{k-1}+O(N^{-1})q_k.
        \notag\end{align}
        Summation over $k=0,...,l$ now yields
        \begin{align*}
        q_{l+1}-q_{l}-\frac{1}{\sqrt{N}}
&=q_{l+1}-q_{l}-(q_0-q_{-1})=\sum_{k=0}^{l}\Big(q_{k+1}-q_k-\left(q_k-q_{k-1}\right)\Big)\\
        &=\sum_{k=0}^{l}\Big(\frac{k+1}{2N}q_{k+1}-\frac{|a_1|}{N^\frac{2}{3}}q_k+\frac{k}{2N}q_{k-1}\Big)+
\\ &\quad +\sum_{k=0}^{l}\Big(\alpha(k+1,N)q_{k+1}+\alpha(k,N)q_{k-1}+O(N^{-1})q_k\Big).
        \end{align*}
        A second summation over $l=0,...,\lfloor yN^\frac{1}{3}\rfloor-1$ now leads to
        \begin{align*}
        &q_{\lfloor y N^\frac{1}{3}\rfloor}-\frac{\lfloor
yN^\frac{1}{3}\rfloor+1}{\sqrt{N}}=\sum_{l=0}^{\lfloor
yN^\frac{1}{3}\rfloor-1}\left(q_l-q_{l-1}-\frac{1}{\sqrt{N}}\right)=
        \\&=\sum_{l=0}^{\lfloor
yN^\frac{1}{3}\rfloor-1}\sum_{k=0}^{l}\left(\frac{k+1}{2N}q_{k+1}-\frac{|a_1|}{N^\frac{2}{3}}q_k+\frac{k}{2N}q_{k-1}\right)
        +\rho(y,N) \end{align*}
with 
\begin{equation}\label{def-rho}
 \rho(y,N):=
\sum_{l=0}^{\lfloor
yN^\frac{1}{3}\rfloor-1}\sum_{k=0}^{l}\Big(\alpha(k+1,N)q_{k+1}+\alpha(k,N)q_{k-1}+O(N^{-1})q_k\Big).
        \end{equation}
        If we multiply this by $N^\frac{1}{6}$ we get
        \begin{align}\label{discreteinteq}
        &f_N(y)-\frac{\lfloor
yN^\frac{1}{3}\rfloor+1}{N^\frac{1}{3}}-N^\frac{1}{6}\rho(y,N)\notag\\
        &=\frac{1}{N^\frac{1}{3}}\sum_{l=0}^{\lfloor
yN^\frac{1}{3}\rfloor-1}\frac{1}{N^\frac{1}{3}}\sum_{k=0}^{l}\left(\frac{k+1}{2N^\frac{1}{3}}N^\frac{1}{6}q_{k+1}-
{|a_1|}N^\frac{1}{6}q_k+\frac{k}{2N^\frac{1}{3}}N^\frac{1}{6}
q_{k-1}\right)\notag\\
        &=\frac{1}{N^\frac{1}{3}}\sum_{l=0}^{\lfloor
yN^\frac{1}{3}\rfloor-1}\frac{1}{N^\frac{1}{3}}\sum_{k=0}^{l}\left(N^\frac{1}{6}q_k\left(\frac{k+\frac{1}{2}}{N^\frac{1}{3}}-|a_1|\right)\right)+
\notag\\
&\quad+\frac{1}{N^\frac{2}{3}}\sum_{l=0}^{\lfloor
yN^\frac{1}{3}\rfloor-1}\left(\frac{l+1}{2N^\frac{1}{3}}N^\frac{1}{6}\left(q_{l+1}-q_l\right)\right).
        \end{align}
        Notice that  the last equation was obtained from the  shifts  $k+1\mapsto k$ and
$k-1\mapsto k$.
We now compare the r.h.s.~of (\ref{discreteinteq}) with
        \begin{equation}\label{def-gN}
        \int_{0}^{y}\int_{0}^{s}(x-|a_1|)f_N(t)dtds.
        \end{equation}
      For this  we use the functions
        \begin{align*}
        g_N(t):=\sum_{k=0}^{N-1}t_{k,N}\mathbf 1_{\left[\frac{k}{N^\frac{1}{3}},\frac{k+1}{N^\frac{1}{3}}\right]}(t)
\quad\text{ with}\quad t_{k,N}:=N^\frac{1}{6}q_k\left(\frac{k+\frac{1}{2}}{N^\frac{1}{3}}-|a_1|\right).
        \end{align*}
An elementary calculation yields
   \begin{align}   
\int_{0}^{y}&\int_{0}^{s} g_N(t)\> dtds=
        \int_{0}^{y}\int_{0}^{s}\sum_{k=0}^{N-1}t_{k,N}\mathbf{1}_{\left[\frac{k}{N^\frac{1}{3}},\frac{k+1}{N^\frac{1}{3}}\right]}(t)\> dt \> ds\notag \\
&=\frac{1}{2}\left(\frac{yN^\frac{1}{3}-\lfloor
yN^\frac{1}{3}\rfloor}{N^\frac{1}{3}}\right)^2t_{\lfloor y N^\frac{1}{3}\rfloor,N}
+\frac{1}{N^\frac{2}{3}}\sum_{k=0}^{\lfloor y N^\frac{1}{3}\rfloor-1}(\lfloor y N^\frac{1}{3}\rfloor-k)t_{k,N}\notag \\
&\quad\quad
+\frac{yN^\frac{1}{3}-\lfloor
yN^\frac{1}{3}\rfloor-\frac{1}{2}}{N^\frac{1}{3}}\cdot \frac{1}{N^\frac{1}{3}}\sum_{k=0}^{\lfloor
y N^\frac{1}{3}\rfloor -1}t_{k,N}.
\notag   \end{align}      
 Moreover,
        \begin{align}\label{Abelpartsum}
        \sum_{l=0}^{L}\sum_{k=0}^{l}t_{k,N}=\sum_{k=0}^{L}(L-k+1)t_{k,N} \quad\quad (L\in \mathbb{N}).
        \end{align}
Hence,
        \begin{align}\label{discretedoubleint}
        &\int_{0}^{y}\int_{0}^{s}g_N(t)dtds-\frac{1}{N^\frac{2}{3}}\sum_{l=0}^{\lfloor y
N^\frac{1}{3}\rfloor-1}\sum_{k=0}^{l}t_{k,N}\\
        &=\frac{1}{2}\left(\frac{yN^\frac{1}{3}-\lfloor
yN^\frac{1}{3}\rfloor}{N^\frac{1}{3}}\right)^2t_{\lfloor y N^\frac{1}{3}\rfloor,N}
        +\frac{yN^\frac{1}{3}-\lfloor
yN^\frac{1}{3}\rfloor-\frac{1}{2}}{N^\frac{1}{3}}\cdot \frac{1}{N^\frac{1}{3}}\sum_{k=0}^{\lfloor
y N^\frac{1}{3}\rfloor -1}t_{k,N}.
        \notag\end{align}
       (\ref{discreteinteq}), (\ref{def-gN}), and (\ref{discretedoubleint}) now show that
        \begin{align}\label{inteqfirstvers}
        f_N(y)=\int_{0}^{y}\int_{0}^{s}f_N(t)\left(\frac{\lfloor t N^\frac{1}{3}\rfloor
+\frac{1}{2}}{N^\frac{1}{3}}-|a_1|\right)dtds+\frac{\lfloor
yN^\frac{1}{3}\rfloor+1}{N^\frac{1}{3}}+\widetilde{\operatorname{err}}(N,y)
        \end{align}
        with the error term
        \begin{align*}
        \widetilde{\operatorname{err}}(N,y)&:=N^\frac{1}{6}\rho(y,N)+\frac{1}{N^\frac{2}{3}}\sum_{l=0}^{\lfloor
yN^\frac{1}{3}\rfloor-1}\frac{l+1}{2N^\frac{1}{3}}\left(q_{l+1}-q_l\right)\\
        &-\frac{yN^\frac{1}{3}-\lfloor
yN^\frac{1}{3}\rfloor-\frac{1}{2}}{N^\frac{1}{3}}\frac{1}{N^\frac{1}{3}}\sum_{k=0}^{\lfloor
y N^\frac{1}{3}\rfloor
-1}N^\frac{1}{6}q_k\left(\frac{k+\frac{1}{2}}{N^\frac{1}{3}}-|a_1|\right)\\
        &-\frac{1}{2}\left(\frac{yN^\frac{1}{3}-\lfloor
yN^\frac{1}{3}\rfloor}{N^\frac{1}{3}}\right)^2N^\frac{1}{6}q_{\lfloor
yN^\frac{1}{3}\rfloor}\left(\frac{\lfloor
yN^\frac{1}{3}\rfloor+\frac{1}{2}}{N^\frac{1}{3}}-|a_1|\right).
        \end{align*}
        As
        \begin{align*}
        \frac{\lfloor yN^\frac{1}{3}\rfloor+1}{N^\frac{1}{3}}=y+O(N^{-\frac{1}{3}})
        \end{align*}
        and 
        \begin{align*}
        \frac{\lfloor tN^\frac{1}{3}\rfloor+\frac{1}{2}}{N^\frac{1}{3}}=t+\frac{\lfloor
tN^\frac{1}{3}\rfloor-tN^\frac{1}{3}+\frac{1}{2}}{N^\frac{1}{3}}=t+O(N^{-\frac{1}{3}}),
        \end{align*}
        we get
        \begin{align*}
        f_N(y)=\int_{0}^y\int_{0}^{s}(t-|a_1|)f_N(t)dtds+y+\operatorname{err}(y,N)
        \end{align*}
        with the error term
        \begin{align}\label{errorterm1}
        \operatorname{err}(N,y)&=N^\frac{1}{6}\rho(y,N)+\frac{1}{N^\frac{2}{3}}\sum_{l=0}^{\lfloor
yN^\frac{1}{3}\rfloor-1}\frac{l+1}{2N^\frac{1}{3}}\left(q_{l+1}-q_l\right)\notag\\
        &-\frac{yN^\frac{1}{3}-\lfloor
yN^\frac{1}{3}\rfloor-\frac{1}{2}}{N^\frac{1}{3}}\frac{1}{N^\frac{1}{3}}\sum_{k=0}^{\lfloor
y N^\frac{1}{3}\rfloor
-1}N^\frac{1}{6}q_k\left(\frac{k+\frac{1}{2}}{N^\frac{1}{3}}-|a_1|\right)\notag\\
        &-\frac{1}{2}\left(\frac{yN^\frac{1}{3}-\lfloor
yN^\frac{1}{3}\rfloor}{N^\frac{1}{3}}\right)^2N^\frac{1}{6}q_{\lfloor
yN^\frac{1}{3}\rfloor}\left(\frac{\lfloor yN^\frac{1}{3}\rfloor
                +\frac{1}{2}}{N^\frac{1}{3}}-|a_1|\right)\notag\\
        &+\frac{\lfloor y N^\frac{1}{3}\rfloor-yN^\frac{1}{3}+1}{N^\frac{1}{3}}
        +\int_{0}^{y}\int_{0}^{s}\frac{\lfloor t
N^\frac{1}{3}\rfloor-tN^\frac{1}{3}+\frac{1}{2}}{N^\frac{1}{3}}f_N(t)dtds
        \end{align}
\end{proof}

\begin{lemma}\label{lemmaerrbound}
        The error term in (\ref{errorterm1}) satisfies
        $\operatorname{err}(N,y)=O(N^{-\frac{1}{3}})$
         locally uniformly in $y\in[0,\infty[$.
\end{lemma}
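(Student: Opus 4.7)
The plan is to bound each of the six summands of \eqref{errorterm1} separately; with $L:=\lfloor yN^{1/3}\rfloor$, three are \emph{static} (they can be controlled using only the crude bound $|q_k|\le 1$) and three are \emph{dynamic} in the sense that they contain a Riemann sum or integral of $f_N$ itself. Two global inputs are used throughout: from \eqref{normalization-ops-a} one obtains $|Q_{k,N}(z_{i,N})|\le 1$, hence $|q_k|\le 1$ for all $k,N$; and the Taylor remainder \eqref{alphaestimates} combined with $k\le L$ yields $0\le\alpha(k,N)\le c\,y^2 N^{-4/3}$ uniformly for $y\in[0,Y]$ and $N$ large. The three static summands are then immediate: (i) $\rho(y,N)$ is a double sum with at most $L^2/2=O(y^2 N^{2/3})$ terms each of size $O(y^2 N^{-4/3})$, whence $N^{1/6}\rho(y,N)=O(y^4 N^{-1/2})$; (ii) Abel summation collapses $\sum_{l=0}^{L-1}(l+1)(q_{l+1}-q_l)$ to $Lq_L-\sum_{m=0}^{L-1}q_m$, so the second summand equals $(Lq_L-\sum q_m)/(2N)=O(yN^{-2/3})$; (iii) the summand $(L-yN^{1/3}+1)/N^{1/3}$ is trivially $O(N^{-1/3})$.

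The three dynamic summands each have the shape ``an $O(N^{-1/3})$ prefactor times a Riemann sum or integral of $f_N$''. Writing $M_N(y):=\max_{0\le t\le y}|f_N(t)|$, the bound $|f_N(k/N^{1/3})|\le M_N(y)$ reduces all three of them to $C(Y)\,N^{-1/3}M_N(y)$ on $[0,Y]$ for a suitable continuous function $C(Y)>0$ independent of $N$. To close the argument an a priori uniform bound on $M_N$ is needed. Inserting the estimates above into the integral equation of Lemma~\ref{lemmaODEfN} and using $\int_0^y\!\int_0^s(\cdot)\,dt\,ds=\int_0^y(y-t)(\cdot)\,dt$ produces
\[
M_N(y)\le y+\int_0^y(y-t)(t+|a_1|)M_N(t)\,dt+C_1(Y)N^{-1/3}+C_2(Y)N^{-1/3}M_N(y).
\]
For $N$ large enough that $C_2(Y)N^{-1/3}\le 1/2$ on $[0,Y]$, the last term is absorbed on the left-hand side, and Gronwall's inequality applied to the Volterra-type kernel $(y-t)(t+|a_1|)$ yields $M_N(y)\le C(Y)$ uniformly in $N$.

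This bootstrap is the principal obstacle, since the error itself contains $f_N$-dependent contributions and therefore the uniform bound on $f_N$ and the $O(N^{-1/3})$ bound on $\operatorname{err}$ have to be established simultaneously rather than sequentially. Once the uniform bound $M_N(y)\le C(Y)$ is in hand, feeding it back into the three dynamic summands shows each of them is $O(N^{-1/3})$ locally uniformly in $y\in[0,\infty[$, and combining this with the bounds for the three static summands gives $\operatorname{err}(N,y)=O(N^{-1/3})$ as claimed.
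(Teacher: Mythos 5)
Your overall strategy is genuinely different from the paper's: the paper controls the $f_N$-dependent pieces of the error by combining the $\ell^2$ identity $\sum_{k=0}^{N-1}q_k^2=1$ (which comes from the orthogonality of $T_N$, see \eqref{rowsumeq}) with the Cauchy--Schwarz inequality, together with the positivity $q_k>0$, whereas you run a simultaneous Gronwall bootstrap for $M_N(y)=\sup_{t\le y}|f_N(t)|$. The bootstrap itself is sound: the kernel $(y-t)(t+|a_1|)$ is bounded on $[0,Y]^2$, the term $C_2(Y)N^{-1/3}M_N(y)$ can be absorbed for $N\ge N_0(Y)$, the finitely many remaining $N$ are harmless since each $f_N$ is a bounded step function, and your Abel-summation treatment of the second summand (reducing it to $(Lq_L-\sum_{m<L}q_m)/(2N)=O(N^{-2/3})$ with only $|q_k|\le1$) is a clean elementary alternative to the paper's use of \eqref{intyfnlocalbound}.

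There is, however, a concrete gap in item (i). By its definition \eqref{def-rho}, $\rho(y,N)$ contains, besides the $\alpha$-terms, the contribution $\sum_{l=0}^{L-1}\sum_{k=0}^{l}O(N^{-1})q_k$ stemming from the $O(N^{-1})$ remainder in the Plancherel--Rotach expansion \eqref{Plancherel-Rotach}. Your claimed per-term size $O(y^2N^{-4/3})$ is correct for the $\alpha$-terms but not for this piece: with only the crude bound $|q_k|\le 1$ it is $O(N^{-1})\cdot O(L^2)=O(N^{-1/3})$, so after multiplying by $N^{1/6}$ you obtain only $O(N^{-1/6})$, which is not $O(N^{-1/3})$. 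The classification of $\rho$ as ``static'' therefore fails. The repair stays within your framework: reclassify this piece as dynamic, writing $N^{1/6}\sum_{l,k}O(N^{-1})q_k=O(N^{-1})\sum_{l,k}N^{1/6}q_k\le O(N^{-1})\cdot O(N^{2/3})\,M_N(y)=O(N^{-1/3})M_N(y)$, and feed it into the bootstrap inequality along with the other three dynamic terms (alternatively, the paper's Cauchy--Schwarz argument based on $\sum_k q_k^2=1$ yields $O(N^{-1/3})$ for this sum directly). With that correction the argument closes.
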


\begin{proof} Fix some $M>0$ and consider $y\in[0,M]$. We recapitulate that  the matrices 
         $T_N=(Q_{k-1,N}(z_{i,N}))_{k,i=1,...,N}$ are orthogonal which implies that
        for all $N\in\mathbb{N}$
        \begin{align}\label{rowsumeq}
        1=\sum_{k=0}^{N-1}(Q_{k,N}(z_{N,N}))^2=\frac{1}{N^\frac{1}{3}}\sum_{k=0}^{N-1}(N^\frac{1}{6}Q_{k,N}(z_{N,N}))^2
        =\int_{0}^{\infty}f_N^2(t)dt.
        \end{align}
        We next prove
        \begin{align}\label{intfnlocalbound}
        \int_{0}^{y}f_N(t)dt=O(1) \quad\text{for}\quad N\to\infty.
        \end{align}                
        For this we recall that by the definition of $f_N$
        \begin{align*}
        f_N(t)=\sum_{k=0}^{N-1}N^\frac{1}{6}Q_{k,N}(z_{N,N})\mathbf{1}_{\left[\frac{k}{N^{1/3}},\frac{k+1}{N^{1/3}}\right]}(t)
        \end{align*}
       with $Q_{k,N}(z_{N,N})>0$ for all $k$. This follows from the fact that the polynomials $Q_{k,N}$
 have a positive leading coefficient and are orthogonal w.r.t.~some measure with support $\{z_{1,N},\ldots,z_{N,N}\}$ which implies that 
all their zeros are contained in $]z_{1,N},z_{N,N}[$; see e.g.~\cite{C}.
We thus see that $f_N(t)\geq 0$ for $t\geq 0$. Hence, for $y\in[0,M]$,
        \begin{align*}
        \int_{0}^{y}f_N(t)dt&\leq\int_{0}^{M}f_N(t)dt
=\frac{1}{N^\frac{1}{3}}\sum_{k=0}^{\lfloor
MN^\frac{1}{3}\rfloor-1}N^\frac{1}{6}q_k
        +\frac{MN^\frac{1}{3}-\lfloor
MN^\frac{1}{3}\rfloor}{N^\frac{1}{3}}N^\frac{1}{6}q_{\lfloor
MN^\frac{1}{3}\rfloor}\\
        &\leq\frac{1}{N^\frac{1}{3}}\sum_{k=0}^{\lfloor
MN^\frac{1}{3}\rfloor}N^\frac{1}{6}q_k.
        \end{align*}
        H\"older's inequality and \eqref{rowsumeq} now imply that for $y\in[0,M]$ and $N\in\mathbb{N}$,
        \begin{align}\label{intfnlocalboundconcrete}
        \int_{0}^{y}f_N(t)dt&\leq\frac{1}{N^\frac{1}{3}}\left(\sum_{k=0}^{\lfloor
MN^\frac{1}{3}\rfloor}q_k^2\right)^\frac{1}{2}
        \left(\sum_{k=0}^{\lfloor MN^\frac{1}{3}\rfloor}N^\frac{1}{3}\right)^\frac{1}{2}\\
        &\leq\frac{1}{N^\frac{1}{3}}\sqrt{N^\frac{1}{3}(\lfloor
MN^\frac{1}{3}\rfloor+1)}\leq \sqrt{M+\frac{2}{N^\frac{1}{3}}}\leq
\sqrt{M+2}.\notag
        \end{align}
      This shows \eqref{intfnlocalbound}. In an analogous way we prove that for $y\in[0,M]$ and $\theta\in[0,1]$,
        \begin{align}\label{intyfnlocalbound}
        \frac{1}{N^\frac{1}{3}}\sum_{l=0}^{\lfloor
yN^\frac{1}{3}\rfloor-1}\frac{l+\theta}{N^\frac{1}{3}}N^\frac{1}{6}q_l=O(1).
        \end{align}
        For this  we  observe that
        \begin{align*}
        \sum_{l=0}^{\lfloor
yN^\frac{1}{3}\rfloor-1}\frac{l+\theta}{N^\frac{1}{3}}N^\frac{1}{6}q_l\leq
        \sum_{l=0}^{\lfloor
MN^\frac{1}{3}\rfloor-1}\frac{MN^\frac{1}{3}+1}{N^\frac{1}{3}}N^\frac{1}{6}q_l\leq
        {(M+1)}\sum_{l=0}^{\lfloor MN^\frac{1}{3}\rfloor-1}N^\frac{1}{6}q_l.
        \end{align*}
        This together with (\ref{intfnlocalbound}) shows (\ref{intyfnlocalbound}).

         Moreover, (\ref{intfnlocalboundconcrete}) leads to the following estimate for the last term in 
(\ref{errorterm1}):
\begin{align}\label{errbounddoubleint}
        \left|\int_{0}^{y}\int_{0}^{s}\frac{\lfloor
tN^\frac{1}{3}\rfloor-tN^\frac{1}{3}+\frac{1}{2}}{N^\frac{1}{3}}f_N(t)\>dt\>ds\right|&\leq
        \frac{1}{2N^\frac{1}{3}}\int_{0}^{y}\sqrt{M+2}\>ds\notag\\
        &\leq\frac{M\sqrt{M+2}}{N^\frac{1}{3}}=O(N^{-\frac{1}{3}}).
        \end{align}
        We  now turn to the estimation of $N^\frac{1}{6}\rho(y,N)$. For $y\in[0,M]$ and 
$k=0,...,\lfloor yN^\frac{1}{3}\rfloor $ we obtain that $N/(N-k)$ remains bounded for large $N$. Therefore,
 (\ref{alphaestimates}) implies readily that $\alpha(k,N)=O(N^{-\frac{4}{3}})$ and thus, by (\ref{def-rho}),
        \begin{align}\label{estimates-rho}
        |&N^\frac{1}{6}\rho(y,N)|\\ &\leq \sum_{l=0}^{\lfloor
yN^\frac{1}{3}\rfloor-1}
\sum_{k=0}^{l}\left(|O(N^{-\frac{4}{3}})|N^\frac{1}{6}q_{k+1}+|O(N^{-\frac{4}{3}})|N^\frac{1}{6}q_{k-1}+|O(N^{-1})|N^\frac{1}{6}q_k\right)\notag\\
        &\leq \sum_{l=0}^{\lfloor M
N^\frac{1}{3}\rfloor-1}\sum_{k=0}^{l}\left(|O(N^{-\frac{7}{6}})|q_{k+1}+|O(N^{-\frac{7}{6}})|q_{k-1}+|O(N^{-\frac{5}{6}})|q_k\right).
               \notag\end{align}
        If we use the summation formula (\ref{Abelpartsum}) and H\"older's inequality,
 we see that the third summand on the r.h.s.~of (\ref{estimates-rho}) satisfies
        \begin{align*}
        &|O(N^{{-\frac{5}{6}}})|\sum_{l=0}^{\lfloor
MN^\frac{1}{3}\rfloor-1}\sum_{k=0}^{l}q_k=|O(N^{-5/6})|\sum_{k=0}^{\lfloor M
N^\frac{1}{3}\rfloor -1} (\lfloor M N^\frac{1}{3}\rfloor -k)q_k\\
        \leq &|O(N^{{-\frac{5}{6}}})|\left(\sum_{k=0}^{\lfloor M N^\frac{1}{3}\rfloor-1}
q_k^2\right)^\frac{1}{2}
        \left(\sum_{k=0}^{\lfloor M N^\frac{1}{3}\rfloor-1} \underbrace{(\lfloor M
N^\frac{1}{3}\rfloor -k)^2}_{\leq M^2N^\frac{2}{3}}\right)^\frac{1}{2}\\
        \leq &|O(N^{{-\frac{5}{6}}})|  \left(\lfloor M N^\frac{1}{3}\rfloor M^2
N^\frac{2}{3}\right)^\frac{1}{2}=O(N^{-\frac{1}{3}}). 
        \end{align*}
        If we keep in mind that $q_0=\frac{1}{\sqrt{N}}$ we can estimate the other two sums
in the same way. In summary, we conclude for the first term in (\ref{errorterm1}) that
\begin{align*}
        N^\frac{1}{6}\rho(y,N)=O(N^{-\frac{1}{3}}).
        \end{align*}
Furthermore, the  second term in (\ref{errorterm1}) can be estimated by a corresponding bound by
(\ref{intyfnlocalbound}) with  $\theta=1/2$ and $1$ and with an index
shift together with $Q_{0,N}=\frac{1}{\sqrt{N}}$.
 Moreover, the third term in (\ref{errorterm1}) can be estimated in the same way by
splitting the sum there and using (\ref{intyfnlocalbound}) for the first and
(\ref{rowsumeq}) for the second sum. Finally, the fourth and fifth term  in (\ref{errorterm1})
 obviously have order $O(N^{-\frac{1}{3}})$, while this follows for the  last term from   (\ref{errbounddoubleint}).
This completes the proof.
\end{proof}

We now complete the proof of Theorem \ref{localconvfn} by proving the following

\begin{lemma}\label{lemmalocalconvN13}
 For $N\to\infty$, 
        $|f_N(y)-f(y)|=O(N^{-\frac{1}{3}})$ locally uniformly for $y\in[0,\infty[$.
\end{lemma}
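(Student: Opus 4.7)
The plan is to interpret Lemma \ref{lemmaODEfN} as a discrete Volterra integral equation whose limiting version is solved by $f$, and then to control the difference $f_N-f$ by a standard Gr\"onwall-type argument applied to the error term estimated in Lemma \ref{lemmaerrbound}.

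First I would observe that the limit $f(y)=\mathsf{Ai}(y+a_1)/\mathsf{Ai}'(a_1)$ satisfies the Airy equation (\ref{ode-airy}) shifted by $a_1$, i.e., $f''(y)=(y-|a_1|)f(y)$ on $[0,\infty[$, with initial conditions $f(0)=\mathsf{Ai}(a_1)/\mathsf{Ai}'(a_1)=0$ (since $\mathsf{Ai}(a_1)=0$) and $f'(0)=1$. Integrating this ODE twice from $0$ yields the integral equation
\begin{equation*}
f(y)=\int_{0}^{y}\int_{0}^{s}(t-|a_1|)f(t)\,dt\,ds+y,\qquad y\in[0,\infty[.
\end{equation*}

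Next I would subtract this identity from the one in Lemma \ref{lemmaODEfN}. Setting $h_N:=f_N-f$, I obtain the Volterra equation
\begin{equation*}
h_N(y)=\int_{0}^{y}\int_{0}^{s}(t-|a_1|)h_N(t)\,dt\,ds+\operatorname{err}(y,N).
\end{equation*}
Fix $M>0$. For $y\in[0,M]$, the kernel satisfies $|t-|a_1||\le M+|a_1|$, and interchanging the order of integration gives $\int_0^y\!\int_0^s|h_N(t)|\,dt\,ds=\int_0^y(y-t)|h_N(t)|\,dt$. Writing $E_N(M):=\sup_{y\in[0,M]}|\operatorname{err}(y,N)|$ and $C:=M+|a_1|$, we obtain the inequality
\begin{equation*}
|h_N(y)|\le E_N(M)+C\int_{0}^{y}(y-t)|h_N(t)|\,dt,\qquad y\in[0,M].
\end{equation*}

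The final step is to apply a standard Gr\"onwall-type lemma for such second-order Volterra inequalities: the solution of the associated equality is $u(y)=E_N(M)\cosh(\sqrt{C}\,y)$, so iterating the Picard map (or comparison with this explicit solution) yields
\begin{equation*}
|h_N(y)|\le E_N(M)\cosh\bigl(\sqrt{C}\,M\bigr),\qquad y\in[0,M].
\end{equation*}
Since Lemma \ref{lemmaerrbound} gives $E_N(M)=O(N^{-1/3})$, this proves that $|f_N(y)-f(y)|=O(N^{-1/3})$ uniformly on $[0,M]$. As $M>0$ was arbitrary, the locally uniform estimate follows.

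The main obstacle is essentially already resolved by Lemmas \ref{lemmaODEfN} and \ref{lemmaerrbound}: once the discrete recurrence has been recast as a Volterra equation with a controlled error term, the remainder of the argument is a soft application of Gr\"onwall. The only technical care needed is to ensure the Gr\"onwall estimate is applied with $M$-dependent constants, which is harmless since local uniformity is all that is claimed.
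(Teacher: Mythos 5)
Your proposal is correct and follows essentially the same route as the paper: both reduce the double integral to a single Volterra integral with kernel $(y-t)(t-|a_1|)$, invoke Lemma \ref{lemmaerrbound} for the error, and close with a Gr\"onwall comparison. The only cosmetic difference is that the paper bounds the full kernel by a constant $M''$ and uses the standard first-order Gr\"onwall bound $e^{M''M}$, whereas you keep the factor $(y-t)$ and compare with the explicit solution $E_N(M)\cosh(\sqrt{C}\,y)$ of the second-order Volterra equality; both are valid.
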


\begin{proof} Again, fix $M>0$, let $y\in [0,M]$, and assume that $N^\frac{2}{3}>M$. 
The  ODE (\ref{ode-airy}) yields that the function $f(y)=\frac{\mathsf {Ai}(y+a_1)}{\mathsf {Ai}'(a_1)}$
satisfies
 \begin{equation}\label{prop-airy}
        f''(y)=(y+a_1)f(y) \quad\text{with}\quad f(0)=0,\quad f^\prime(0)=1.
        \end{equation}
This ODE leads to the integral equation
        \begin{equation}\label{partial-integration}
        f(y)=\int_{0}^{y}\int_{0}^{s}(t-|a_1|)f(t)dtds+y=\int_{0}^{y}(t-|a_1|)(y-t)f(t)\>dt+y.
        \end{equation}
       Notice that the second equation in (\ref{partial-integration}) follows by partial integration. Moreover, by Lemma \ref{lemmaODEfN},
        \begin{align*}
        f_N(y)=\int_{0}^{y}(t-|a_1|)(y-t)f_N(t)\> dt+y+\operatorname{err}(y,N).
        \end{align*}
We thus obtain
        \begin{align*}
        |f(y)-f_N(y)|&=\left|\int_{0}^{y}(t-|a_1|)(y-t)(f(t)-f_N(t))dt-\operatorname{err}(y,N)\right|\\
        &\leq\int_{0}^{y}|t-|a_1||\cdot|y-t|\cdot|f(t)-f_N(t)|dt+{|\operatorname{err}(y,N)|}
        \end{align*}
        where we know from Lemma \ref{lemmaerrbound}  that there exists a constant $M'=M'(M)>0$ with
        \begin{align*}
        |\operatorname{err}(y,N)|\leq\frac{M'}{N^\frac{1}{3}} \quad\text{for}\quad y\in[0,M]
        \end{align*}
         and $N$ sufficently large.

As $t\mapsto |(t-|a_1|)(y-t)|$ is the absolute value of a second-order polynomial, we find a constant $M''>0$ with
        $|(t-|a_1|)(y-t)|<M''$ for all $t\in[0,y]$ and $y\in[0,M]$.
        Hence,
        \begin{align*}
        |f(y)-f_N(y)|\leq \int_{0}^{y}M''|f(t)-f_N(t)|dt+\frac{M'}{N^\frac{1}{3}}.
        \end{align*}
 The Lemma of Gronwall now implies our claim that
        \begin{align*}
        |f(y)-f_N(y)|\leq\frac{M'}{N^\frac{1}{3}}e^{M''y}\leq
\frac{M'}{N^\frac{1}{3}}e^{M''M} =O(N^{-\frac{1}{3}}).
        \end{align*}
        \end{proof}

We now apply Lemma \ref{lemmalocalconvN13} to the $(N,N)$-entries of the covariance matrices $\Sigma_{N}$
for $\beta$-Hermite ensembles in the freezing regime as in Theorem \ref{covariance-matrix-general} for $N\to\infty$.

\begin{theorem}\label{corrsigmaasymptotics}
        Consider the  covariance matrices $\Sigma_{N}=:\left(\sigma_{i,j}\right)_{i,j=1,\ldots,N}$ of 
 $\beta$-Hermite ensembles in the freezing regime. Then
        \begin{align*}
        \lim\limits_{N\rightarrow\infty}{N^\frac{1}{3}}{\sigma_{N,N}}=
        \int_{0}^{\infty}\frac{\mathsf {Ai}(x+a_1)^2}{\mathsf {Ai}'(a_1)^2x}dx =0.834\ldots
        \end{align*}
\end{theorem}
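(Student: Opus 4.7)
The plan is to combine the eigendecomposition of $\Sigma_N$ from Theorem~\ref{ev-a} with the local uniform convergence from Theorem~\ref{localconvfn} in order to recognize $N^{1/3}\sigma_{N,N}$ as a Riemann sum for the target integral. Since $T_N=(Q_{k-1,N}(z_{i,N}))_{i,k=1,\ldots,N}$ is orthogonal with $T_N^{T}S_NT_N=\operatorname{diag}(1,2,\ldots,N)$, one has $\Sigma_N=T_N\operatorname{diag}(1,1/2,\ldots,1/N)T_N^{T}$, and reading off the $(N,N)$-entry yields
\begin{equation*}
\sigma_{N,N}=\sum_{k=1}^{N}\frac{Q_{k-1,N}(z_{N,N})^{2}}{k}.
\end{equation*}
Setting $y_m:=m/N^{1/3}$ and $\Delta y:=1/N^{1/3}$, and using the definition $f_N(y_m)=N^{1/6}Q_{m,N}(z_{N,N})$, this becomes, after the shift $m=k-1$,
\begin{equation*}
N^{1/3}\sigma_{N,N}=\sum_{m=0}^{N-1}\frac{f_N(y_m)^{2}}{y_m+\Delta y}\,\Delta y,
\end{equation*}
which is a (slightly shifted) Riemann sum for $\int_0^{\infty}f(y)^{2}/y\,dy$.

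To pass to the limit, I would split the sum according to small, moderate and large $m$, parameterized by thresholds $0<\epsilon<M$. For the tail $m>\lfloor MN^{1/3}\rfloor$, I use the crude bound $1/(y_m+\Delta y)\le 1/M$ together with the identity $\int_0^{\infty}f_N(t)^{2}\,dt=1$ from \eqref{rowsumeq} to bound the contribution by $1/M$ uniformly in $N$. For the middle range $\epsilon N^{1/3}\le m\le MN^{1/3}$, Theorem~\ref{localconvfn} combined with the uniform continuity of $y\mapsto 1/y$ on $[\epsilon,M]$ gives convergence to $\int_{\epsilon}^{M}f(y)^{2}/y\,dy$. Since $\mathsf{Ai}(a_1)=0$, we have $f(0)=0$, $f'(0)=1$ and (by the Airy ODE) $f''(0)=a_1f(0)=0$; hence $f(y)=y+O(y^{3})$ near $0$, so $f(y)^{2}/y$ extends continuously to $0$ with value $0$, and $\int_0^{\epsilon}f(y)^{2}/y\,dy\to 0$ as $\epsilon\to 0$.

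The delicate step is controlling the discrete contribution from small $y$. My aim is to establish a linear bound $|f_N(y)|\le Cy$ on $[0,\epsilon]$, uniformly in large $N$. To this end I would start from the integral equation of Lemma~\ref{lemmaODEfN}, namely
\begin{equation*}
f_N(y)=y+\operatorname{err}(y,N)+\int_0^{y}(t-|a_1|)(y-t)f_N(t)\,dt,
\end{equation*}
and first sharpen the error estimate of Lemma~\ref{lemmaerrbound} to $|\operatorname{err}(y,N)|=O(yN^{-1/3})$ on $[0,\epsilon]$; this refinement requires keeping the $y$-dependence in each of the five pieces of \eqref{errorterm1} instead of taking the crude uniform bound on $[0,M]$. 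A Gronwall argument applied to the above equation then yields $|f_N(y_m)|\le Cy_m$ on the initial range, and the small-$m$ contribution is controlled by $C^{2}\sum_{m\le \epsilon N^{1/3}}y_m^{2}/(y_m+\Delta y)\,\Delta y\le C^{2}\epsilon^{2}/2$, which vanishes as $\epsilon\to 0$ uniformly in $N$.

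The main obstacle I anticipate is exactly this near-zero refinement of Lemma~\ref{lemmaerrbound}: extracting an additional factor of $y$ from the error term forces one to re-examine the Plancherel-Rotach remainder in \eqref{Plancherel-Rotach}, the Lagrange remainder \eqref{Lagrange}, and the double sum $\rho(y,N)$ in \eqref{def-rho} with care, since the naive estimates only give uniform control on compact sets and do not manifestly vanish at $y=0$. Once the three pieces are assembled and the limits $N\to\infty$, $M\to\infty$, $\epsilon\to 0$ are taken in the right order, the stated limit $\int_0^{\infty}\mathsf{Ai}(y+a_1)^{2}/(\mathsf{Ai}'(a_1)^{2}\,y)\,dy$ follows, and the numerical value $0.834\ldots$ is obtained by standard quadrature applied to this convergent improper integral.
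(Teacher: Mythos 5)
Your overall architecture is sound and genuinely different from the paper's. The paper converts the sum into $\int_0^\infty f_N(y)^2h_N(y)\,dy$ with the step functions $h_N$ approximating $1/y$, splits at $y=1$, handles $[1,\infty[$ by upgrading vague to weak convergence of the probability measures $f_N^2\,d\lambda$ (using $\int_0^\infty f_N^2=\int_0^\infty f^2=1$), and handles $[0,1]$ by dominated convergence, where the dominating function comes from the bound $|f_N-f|\cdot N^{1/3}=O(1)$ of Lemma \ref{lemmalocalconvN13} together with $h_N(y)\le\min(N^{1/3},1/y)$. Your $\epsilon$--$M$ truncation replaces the weak-convergence step by the cruder but simpler tail bound $1/M\cdot\int_0^\infty f_N^2=1/M$, which is a perfectly good trade; the middle range is routine in both treatments.

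There is, however, one concrete misstep in your treatment of small $y$, which you correctly identify as the delicate point. The refinement $|\operatorname{err}(y,N)|=O(yN^{-1/3})$ that you propose is false: evaluating the integral equation of Lemma \ref{lemmaODEfN} at $y=0$ gives $\operatorname{err}(0,N)=f_N(0)=N^{1/6}Q_{0,N}(z_{N,N})=N^{-1/3}$, which does not vanish at $y=0$; for the same reason the linear bound $|f_N(y)|\le Cy$ cannot hold on $[0,N^{-1/3}[$, where $f_N\equiv N^{-1/3}$. Fortunately the refinement is also unnecessary. Lemma \ref{lemmalocalconvN13} as stated already gives $|f_N(y)|\le |f(y)|+CN^{-1/3}\le C'(y+N^{-1/3})$ uniformly on $[0,\epsilon]$ (using $f(0)=0$, $f'(0)=1$), and with $\Delta y=N^{-1/3}$ this yields
\begin{equation*}
\sum_{m\le \epsilon N^{1/3}}\frac{f_N(y_m)^2}{y_m+\Delta y}\,\Delta y
\;\le\; 2C'^2\sum_{m\le \epsilon N^{1/3}}\Bigl(y_m\,\Delta y+\Delta y^2\Bigr)
\;=\;O(\epsilon^2)+O(\epsilon N^{-1/3}),
\end{equation*}
which vanishes as $\epsilon\to0$ uniformly in large $N$. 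With this correction your proof closes; the rest of the argument, including the order of limits $N\to\infty$, $M\to\infty$, $\epsilon\to0$, is fine.
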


\begin{proof}
        We recapitulate that $\Sigma_N =T_N\operatorname{diag}(1,1/2,...,1/N)T_N^T$. Therefore,
        \begin{align*}
        \sigma_{N,N}&=\sum_{k=1}^{N}\frac{1}{k}(Q_{k-1}^N(z_{N,N}))^2
        =\frac{1}{N^\frac{2}{3}}\sum_{k=0}^{N-1}\frac{N^\frac{1}{3}}{k+1}\left(N^\frac{1}{6}Q_k^N(z_{N,N})\right)^2.
        \end{align*}
 Define the functions
        \begin{align*}
        h_N(y):=\sum_{k=0}^{N-1}\frac{N^\frac{1}{3}}{k+1}\mathbf{1}_{\left[\frac{k}{N^\frac{1}{3}},\frac{k+1}{N^\frac{1}{3}}\right)}(y),
        \end{align*}
        which  are  approximations of the function  $y\mapsto\frac{1}{y}$ with
        \begin{align}\label{hNestimates}
        0\leq \frac{1}{y}-h_N(y)\leq
\frac{N^\frac{1}{3}}{k(k+1)}\leq \frac{1}{y}\frac{1}{k}  \quad\text{for}\quad k=\lfloor y N^\frac{1}{3}\rfloor, \>\> y>0.
        \end{align} 
 With this notation we have
        \begin{align*}
    N^\frac{1}{3}  \sigma_{N,N} = \frac{1}{N^\frac{1}{3}}\sum_{k=0}^{N-1}\frac{N^\frac{1}{3}}{k+1}\left(N^\frac{1}{6}Q_k^N(z_{N,N})\right)^2
        =\int_{0}^{\infty}(f_N(y))^2h_N(y)dy.
        \end{align*}
        The statement of the theorem is now equivalent to 
        \begin{align*}
      \lim_{N\to\infty}  \int_{0}^{\infty}(f_N(y))^2h_N(y)dy =\int_{0}^{\infty}\frac{\mathsf
{Ai}(x+a_1)^2}{\mathsf {Ai}'(a_1)^2y}dy=\int_{0}^{\infty}\frac{f(y)^2}{y}dy .
        \end{align*}
       To show this, we prove that
        \begin{align}\label{convint01fn}
        \lim_{N\to\infty}  \int_{0}^{1}f_N(y)^2h_N(y)\>dy=\int_{0}^{1}\frac{f(y)^2}{y}\>dy
        \end{align}
        and 
        \begin{align}\label{convint1inffn}
         \lim_{N\to\infty} \int_{1}^{\infty}f_N(y)^2h_N(y)\>dy = \int_{1}^{\infty}\frac{f(y)^2}{y}\>dy.
        \end{align}
        For this we first recapitulate from \eqref{rowsumeq} that
        \begin{align}\label{eqintfn2}
        \int_{0}^{\infty}(f_N(y))^2\>dy=1.
        \end{align}
        Furthermore, as $f''(y)=(y-|a_1|)f(y)$, we know that
        \begin{align}\label{eqintf2}
        \int_{0}^{\infty}{f(y)^2}\> dy=-\left[|a_1|f(y)^2+f'(y)^2\right]_{y=0}^\infty=1.
        \end{align}
  We next observe that  Theorem \ref{localconvfn} implies that the measures
        $ f_N^2d\lambda$ with Lebesgue densities  $ f_N^2$ converge in a vague way to the measure
 $f^2d\lambda$ on $[0,\infty[$. As all these measures  are probability measures
by  \eqref{eqintfn2} and \eqref{eqintf2}, we conclude from a standard result in probability (see for example \cite{Bi}) that 
these measures  converge even weakly, that is, for all  bounded continuous functions $g:[0,\infty[\to\mathbb R$ we have
        \begin{equation}\label{conv-weakly-0}
        \lim_{N\to \infty}\int_{0}^{\infty}g(y)f_N(y)^2\> dy= \int_{0}^{\infty}g(y)f(y)^2\> dy.
        \end{equation}
Moreover, as all these  probability measures have  Lebesgue densities, 
we again conclude from a standard result in probability (again, see \cite{Bi}) that (\ref{conv-weakly-0}) remains correct
on  $[1,\infty[$, namely, for the   bounded continuous function  $g(y):=\frac{1}{y}$ on $[1,\infty[$
 we have
        \begin{align}\label{weakconvfrom1}
       \lim_{N\to \infty} \int_{1}^{\infty}\frac{f_N(y)^2}{y}\>dy=\int_{1}^{\infty}\frac{f(y)^2}{y}\>dy=:R.
        \end{align} 
On the other hand, \eqref{hNestimates} shows that
  for any $\varepsilon>0$ there is some sufficiently  large  $N(\varepsilon)$  with
\begin{align*} \Bigl|\frac{1}{y}-h_N(y)\Bigr|\le \frac{\varepsilon}{y} \quad\text{for}\quad y\ge 1, \> N\ge N(\varepsilon).
\end{align*}
Therefore,
\begin{equation}\label{estimate-error-n}
\int_{1}^{\infty}  \Bigl|\frac{1}{y}-h_N(y)\Bigr|\>f_N(y)^2\> dy\le \varepsilon\int_{1}^{\infty} \frac{1}{y}\>f_N(y)^2\> dy,
\end{equation}
where, by (\ref{weakconvfrom1}), the r.h.s.~converges for $N\to\infty$ to $\varepsilon R$. (\ref{estimate-error-n}),
(\ref{weakconvfrom1}), and the triangle inequality now readily lead to \eqref{convint1inffn}.

        We finally check \eqref{convint01fn}. We recall that
        \begin{align*}
        \lim_{N\to\infty}f_N(y)^2h_N(y)=\frac{(f(y))^2}{y}  \quad\text{for}\quad y\in[0,1[.
        \end{align*}
 Notice that this formula also holds for $y=0$, as $f$ is analytic in $0$ with $f(0)=0$.
Moreover, 
         \eqref{hNestimates}, the fact that $h_N(y)\leq N^{1/3}$, and Lemma~\ref{lemmalocalconvN13} show that
 for $N$ sufficiently large 
        \begin{align*}
        |f_N(y)^2h_N(y)|&\leq|f_N(y)^2-f(y)^2|h_N(y)+f(y)^2h_N(y)\\
        &\leq (f_N(y)+f(y))|f_N(y)-f(y)|N^\frac{1}{3}+\frac{f(y)^2}{y}\\
        &\leq (1+2f(y))O(1)+\frac{f(y)^2}{y}.
        \end{align*}
  As this is a  bounded continuous function for $y\in[0,1]$, we conclude from  dominated convergence that
\eqref{convint01fn} holds. This completes the proof.
\end{proof}

If we combine Theorem \ref{corrsigmaasymptotics}  with Theorem \ref{clt-main-a-general-x}, we obtain the following result,
which was also proved recently in a completely different way by Gorin and Kleptsyn [GK].

\begin{theorem}\label{CLT-Hermite-Ninfty}
Consider  the  Bessel processes 
$$(X_{t,k}^N)_{t\geq 0}=(X_{t,k,1}^N,\ldots,X_{t,k,N}^N )_{t\ge0}$$ of type $A_{N-1}$ on $C_N^A$ with start in $0\in C_N^A$.
Then, for each $t>0$, 
\begin{equation}\label{CLT-max-limit-main-Hermite}
        \lim_{N\to\infty}\left(\lim_{k\to\infty}N^\frac{1}{6}\sqrt{2k}\left(\frac{X_{t,k,N}^N}{\sqrt{2kt}}-z_{N,N}\right)\right)=
G
        \end{equation}
in distribution with some $\mathcal{N}(0,\sigma_{max}^2)$-distributed random variable $G$  with variance 
\begin{equation}\label{variance-final}
        \sigma_{max}^2:=\int_{0}^{\infty}\frac{\mathsf {Ai}(x+a_1)^2}{(\mathsf
{Ai}'(a_1))^2x}dx =0.834...
        \end{equation}
\end{theorem}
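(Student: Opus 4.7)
The plan is to combine two results already established in the excerpt: the freezing WLT of Theorem \ref{clt-main-a-general-x} (specialized to start at $0\in C_N^A$), which identifies the $k\to\infty$ limit as a centered Gaussian, and Theorem \ref{corrsigmaasymptotics}, which controls the $(N,N)$-entry of the covariance matrix as $N\to\infty$.

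First I would carry out the inner limit. By Theorem \ref{clt-main-a-general-x} with $x=0$, the vector $X_{t,k}^N/\sqrt{t}-\sqrt{2k}(z_{1,N},\ldots,z_{N,N})$ converges in distribution, as $k\to\infty$, to $N(0,\Sigma_N)$. Projecting onto the $N$-th coordinate (a continuous linear map) preserves convergence in distribution, so
\[
\frac{X_{t,k,N}^N}{\sqrt t}-\sqrt{2k}\,z_{N,N}\ \xrightarrow[k\to\infty]{d}\ \mathcal{N}(0,\sigma_{N,N}).
\]
Rewriting the left-hand side as $\sqrt{2k}\bigl(X_{t,k,N}^N/\sqrt{2kt}-z_{N,N}\bigr)$ and multiplying by $N^{1/6}$, we obtain
\[
N^{1/6}\sqrt{2k}\left(\frac{X_{t,k,N}^N}{\sqrt{2kt}}-z_{N,N}\right)\ \xrightarrow[k\to\infty]{d}\ \mathcal{N}\bigl(0,N^{1/3}\sigma_{N,N}\bigr),
\]
since multiplying a $\mathcal{N}(0,\sigma_{N,N})$-variable by $N^{1/6}$ gives $\mathcal{N}(0,N^{1/3}\sigma_{N,N})$.

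Next I would pass to the outer limit. Theorem \ref{corrsigmaasymptotics} asserts precisely that $N^{1/3}\sigma_{N,N}\to\sigma_{max}^2$ as $N\to\infty$, with $\sigma_{max}^2$ equal to the Airy integral in \eqref{variance-final}. Since the characteristic function of $\mathcal{N}(0,N^{1/3}\sigma_{N,N})$ at $\xi\in\mathbb R$ is $\exp\bigl(-\tfrac12 N^{1/3}\sigma_{N,N}\xi^2\bigr)$, which converges pointwise to $\exp\bigl(-\tfrac12 \sigma_{max}^2\xi^2\bigr)$, L\'evy's continuity theorem gives the convergence in distribution of $\mathcal{N}(0,N^{1/3}\sigma_{N,N})$ to $\mathcal{N}(0,\sigma_{max}^2)=\mathcal{L}(G)$.

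Combining the two steps yields the iterated limit \eqref{CLT-max-limit-main-Hermite}. There is no real obstacle here: once Theorem \ref{corrsigmaasymptotics} is available, this becomes a bookkeeping calculation with scalings, plus the elementary fact that convergence of variances implies weak convergence for centered Gaussians. The entire analytic difficulty has already been absorbed into the local uniform convergence $f_N\to f$ (Theorem \ref{localconvfn}) and the subsequent integral estimate in the proof of Theorem \ref{corrsigmaasymptotics}; no further estimate on the Bessel processes themselves is needed.
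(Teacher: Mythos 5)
Your proposal is correct and is essentially the paper's own argument: the paper derives Theorem \ref{CLT-Hermite-Ninfty} precisely by combining Theorem \ref{clt-main-a-general-x} (with $x=0$, projected to the $N$-th coordinate and rescaled) with the variance asymptotics $N^{1/3}\sigma_{N,N}\to\sigma_{max}^2$ of Theorem \ref{corrsigmaasymptotics}. Your explicit justification of the outer limit via characteristic functions is a welcome elaboration of a step the paper leaves implicit.
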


\begin{remarks} 
\begin{enumerate}
\item[\rm{(1)}] If we combine Theorem \ref{CLT-Hermite-Ninfty} with the formula  of Plancherel-Rotach
\begin{align*}
        \frac{z_{N,N}}{\sqrt{2N}}=1-\frac{|a_1|}{2N^\frac{2}{3}}+ r_N \quad\text{with}\quad  r_N=O(N^{-1}),
\end{align*}
we can state (\ref{CLT-max-limit-main-Hermite}) as 
\begin{equation}\label{CLT-max-limit-main-Hermite-2}
        \lim_{N\to\infty}\left(\lim_{k\to\infty}\left(N^\frac{2}{3}\left(\frac{X_{t,k,N}^N}{\sqrt{tN}}-2\sqrt k\right)
+2\sqrt k (|a_1|- N^\frac{2}{3}r_N)\right)\right)
=G.
        \end{equation}
Please notice that in this limit the term $2\sqrt kN^\frac{2}{3}r_N$ cannot be neglected.
\item[\rm{(2)}] Theorem \ref{CLT-Hermite-Ninfty} was stated by Dumitriu and Edelman (Corollary 3.4 in \cite{DE2}), where
the numerical value of $\sigma_{max}^2$ contains a misprint and the proof is sketched only. Moreover, the proof in  \cite{DE2} is based on 
    the representation of the covariance matrix $\Sigma_N$ in Theorem    \ref{clt-main-a-DE}.
This representation of  $\Sigma_N$ with essentially the same proof as above leads also to Theorem \ref{CLT-Hermite-Ninfty}
where then with the aid of (\ref{eqintf2}) one obtains 
\begin{equation}\label{variance-DE}
\sigma_{max}^2
        =2\frac{\int_{0}^{\infty}\mathsf {Ai}^4(x+a_1)dx}{\left(\int_{0}^{\infty}\mathsf
{Ai}^2(x+a_1)dx\right)^2}=2\int_{0}^{\infty}\left(\frac{\mathsf
{Ai}(x+a_1)}{\mathsf {Ai}'(a_1)}\right)^4dx.
        \end{equation}
A numerical computation shows that the value of (\ref{variance-DE}) seems to be equal to that in (\ref{variance-final}).
Unfortunately, we are not able to verify this equality in an analytic way, as this identity does not fit to 
known identities for the Airy function as in, for instance, \cite{VS}.
We  return to this point in the end of this section.

\item[\rm{(3)}] In \cite{RRV}, Ramirez, Rider, and Virag study the largest eigenvalues of $\beta$-Hermite ensembles where they first
take the limit $N\to\infty$ and then $\beta\to\infty$, i.e., $k\to\infty$ here. From the results in   \cite{RRV} one obtains that
\begin{equation}\label{CLT-max-limit-main-Hermite-3}
        \lim_{k\to\infty}\left(\lim_{N\to\infty}\left(N^\frac{2}{3}\left(\frac{X_{t,k,N}^N}{\sqrt{tN}}-2\sqrt k\right)
+2\sqrt k |a_1|\right)\right)
=G.
        \end{equation}
in distribution where $G$ is  $\mathcal{N}(0,\sigma_{max}^2)$-distributed with $\sigma_{max}^2$ as in (\ref{variance-DE}).
\end{enumerate}
\end{remarks}

\begin{remark} Clearly, the preceding limit results for the largest particle in the Hermite case can be 
transfered to the smallest particle by symmetry. 
\end{remark}

Next, we use the following Plancherel-Rotach formula
\begin{equation}\label{Planch-rot-r}
\frac{z_{N-r+1,N}}{\sqrt{2N}}=1-\frac{|a_r|}{2N^\frac{2}{3}}+O(N^{-1}),
\end{equation}
where $a_r$ is the $r$-th largest zero of the Airy function; this formula is derived from the well-known relationship between Hermite and Laguerre polynomials \cite{NIST}
\begin{align*}
H_{2n}(x)&=(-1)^n 2^{2n}n!L_n^{(-1/2)}(x^2),\\
H_{2n+1}(x)&=(-1)^n 2^{2n+1}n!L_n^{(1/2)}(x^2),
\end{align*}
and a corresponding Plancherel-Rotach formula for the Laguerre zeros given by (5) in Tricomi \cite{T}. This leads to the following
result for the $r$-th largest particle which can be also found in \cite{GK}:

\begin{theorem}\label{localconvfnrth}
	 For $r\in\mathbb{N}$ consider the functions 
	\begin{align*}
	f_N(y):=N^\frac{1}{6}Q_{\lfloor N^\frac{1}{3}y\rfloor,N}(z_{N-r+1,N}) \quad\text{for}\quad y\in[0, N^\frac{2}{3}[
	\end{align*}
	and  $f_N(y)=0$ otherwise.
	Then $(f_N)_{N\ge1}$ tends for $N\to\infty$ locally uniformly to 
	\begin{align*}
	f(y):=\frac{\mathsf {Ai}(y+a_r)}{\mathsf {Ai}'(a_r)}\quad\quad\text{for}\quad y\in[0,\infty[.
	\end{align*}
	Moreover,  the covariance matrices $\Sigma_{N}=:\left(\sigma_{i,j}\right)_{i,j=1,\ldots,N}$ of 
the  freezing	$\beta$-Hermite ensembles satisfy
	\begin{align*}
	\lim\limits_{N\rightarrow\infty}{N^\frac{1}{3}}{\sigma_{N-r+1,N-r+1}}=
	\sigma^2_{max,r},
	\end{align*}
	with $\sigma^2_{max,r}$ as specified in Theorem~\ref{localconvfnrth-introduction}.
\end{theorem}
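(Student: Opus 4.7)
The plan is to follow the blueprint of the $r=1$ case (Theorem~\ref{localconvfn} and Theorem~\ref{corrsigmaasymptotics}), replacing $a_1$ by $a_r$ throughout and using the generalized Plancherel--Rotach formula \eqref{Planch-rot-r} for the $r$-th largest zero of $H_N$. First I substitute $x:=z_{N-r+1,N}$ into the three-term recurrence \eqref{3term}, divide by $\sqrt{N}$, and expand $\sqrt{1-k/N}$ via Taylor as in \eqref{Lagrange}. The only change from the proof of Lemma~\ref{lemmaODEfN} is that \eqref{Plancherel-Rotach} is replaced by \eqref{Planch-rot-r}, so the leading-order coefficient becomes $-|a_r|/N^{2/3}$. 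Two nested summations and abelian partial summation then yield, in place of \eqref{inteqfirstvers}, the discrete integral equation
$$
f_N(y)\;=\;\int_0^y\!\!\int_0^s (t-|a_r|)\,f_N(t)\,dt\,ds + y + \mathrm{err}(y,N),
$$
with an error term of exactly the structure of \eqref{errorterm1}.

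Next I would prove the analogue of Lemma~\ref{lemmaerrbound}, namely $\mathrm{err}(y,N)=O(N^{-1/3})$ locally uniformly on $[0,\infty[$. The delicate point is that for $r\ge 2$ the sequence $q_k:=Q_{k,N}(z_{N-r+1,N})$ is no longer termwise nonnegative: the zeros of $Q_{k,N}$ interlace the $z_{i,N}$, so one typically sees alternating signs of $q_k$ near $z_{N-r+1,N}$. Inspecting the $r=1$ argument shows, however, that positivity was used only to drop absolute values before applying H\"older's inequality in \eqref{intfnlocalboundconcrete} and \eqref{intyfnlocalbound}. Reinserting $|q_k|$ and using the row-orthogonality $\sum_{k=0}^{N-1} q_k^2 = 1$ of $T_N$ together with H\"older gives $\int_0^y |f_N(t)|\,dt = O(1)$ locally uniformly, after which the estimates for $\rho(y,N)$ and for the remaining terms in $\mathrm{err}(y,N)$ transfer verbatim.

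The limit function $f(y) = \mathsf{Ai}(y+a_r)/\mathsf{Ai}'(a_r)$ solves
$$
f''(y) = (y+a_r)\,f(y),\qquad f(0)=0,\qquad f'(0)=1,
$$
where $f(0)=0$ precisely because $a_r$ is a zero of $\mathsf{Ai}$. Thus $f$ satisfies the integral equation \eqref{partial-integration} with $a_1$ replaced by $a_r$, and Gronwall's lemma applied to $|f_N-f|$ delivers the local uniform rate $O(N^{-1/3})$, giving the first assertion. For the covariance assertion, I mimic the proof of Theorem~\ref{corrsigmaasymptotics}: writing $N^{1/3}\sigma_{N-r+1,N-r+1}=\int_0^\infty f_N(y)^2 h_N(y)\,dy$ and splitting at $y=1$, the normalization $\int_0^\infty f^2\,dy=1$ required to upgrade vague convergence of the probability measures $f_N^2\,d\lambda$ to weak convergence is obtained from the ODE: since $\frac{d}{dy}\bigl[(f')^2-(y+a_r)f^2\bigr]=-f^2$ and the boundary term at infinity vanishes by the standard Airy decay, one finds $\int_0^\infty f^2\,dy = (f'(0))^2 - a_r f(0)^2 = 1$. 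The identity $\int_0^\infty f_N^2\,dy=1$ persists since it comes from row-orthogonality of $T_N$ and does not depend on which zero of $H_N$ one evaluates at. Dominated convergence on $[0,1]$ and the estimate $|1/y-h_N(y)|\le \varepsilon/y$ on $[1,\infty[$ then produce $\lim_{N\to\infty} N^{1/3}\sigma_{N-r+1,N-r+1} = \int_0^\infty \mathsf{Ai}(x+a_r)^2/(\mathsf{Ai}'(a_r)^2 x)\,dx = \sigma_{max,r}^2$.

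The main technical obstacle is the loss of termwise positivity of $(q_k)$ for $r\ge 2$, which forces one to recast the $L^1$-type bounds of Lemma~\ref{lemmaerrbound} as absolute-value bounds via H\"older and the unit-$\ell^2$-norm identity for rows of $T_N$; once this replacement is carried out, all remaining analytic steps are mechanical transfers of the $r=1$ arguments, and no new identity for the Airy function is required.
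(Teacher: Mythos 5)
Your proposal is correct and follows essentially the same route as the paper's proof, which likewise reduces everything to the $r=1$ arguments of Theorems~\ref{localconvfn} and \ref{corrsigmaasymptotics} by replacing $a_1$ with $a_r$, the Plancherel--Rotach formula \eqref{Plancherel-Rotach} by \eqref{Planch-rot-r}, and noting that $f$ now solves $f''(y)=(y+a_r)f(y)$ with $f(0)=0$, $f'(0)=1$. You also correctly isolate the one genuine subtlety --- the loss of termwise positivity of $q_k=Q_{k,N}(z_{N-r+1,N})$ for $r\ge 2$ --- and resolve it exactly as the paper does, by inserting absolute values via the triangle inequality and running the H\"older/row-orthogonality estimate of \eqref{intfnlocalboundconcrete} on $|q_k|$.
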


\begin{proof}
	The proof is essentially the same as for Theorems \ref{localconvfn} and  \ref{corrsigmaasymptotics}
 where now $f_N$ and $f$ are now those of  Theorem \ref{localconvfnrth}.
 In particular, for $f$ we now have
  $$ f''(y)=(y+a_r)f(y) \quad\text{with}\quad f(0)=0,\quad f^\prime(0)=1.$$
	Moreover, $a_1$ has to be replaced  by $a_r$, and (\ref{Plancherel-Rotach}) by (\ref{Planch-rot-r}).
	We notice that now $f_N(t)\geq 0$ for $t\geq0$ does not hold; however we can still estimate 
	\begin{align*}
\left|	\int_{0}^{y}f_N(t)dt\right|\leq \int_{0}^{y}|f_N(t)|dt
	\end{align*} 
	with the triangle inequality. We then can proceed precisely as in (\ref{intfnlocalboundconcrete}).
\end{proof}


\begin{remark}
	For the first few values of $r$, the integral
	\begin{align*}
	\int_{0}^{\infty}\frac{\mathsf {Ai}(x+a_r)^2}{\mathsf {Ai}'(a_r)^2x}dx=\int_{a_r}^{\infty}\frac{\mathsf {Ai}(x)^2}{\mathsf {Ai}'(a_r)^2(x-a_r)}dx
	\end{align*}
	seems to be decreasing in $r$. This is indeed the case as $r\to\infty$. For this we
        decompose the last integral into the regions $[a_r,a_{r-1}[$, $[a_{r-1},a_1[$ and $[a_1,\infty[$, and estimate it in each case.
                    Let us first note that by \eqref{airy-der-at-zero},
	\begin{align*}
	\mathsf{Ai}'(a_r)^{-2}=\Big(\frac{2\pi^2}{3}\Big)^{1/3}r^{-1/3}+O(r^{-4/3}).
	\end{align*}
	Now, for the first region we  use  \eqref{airy-minus-inf}, \eqref{airy-zero-asymtot},
        the substitution $y=2(-x)^{3/2}/3\pi-r+3/4$, and  obtain for  $r\to\infty$ that
	\begin{align*}
	  \int_{a_r}^{a_{r-1}}\frac{\mathsf {Ai}(x)^2}{(x-a_r)}dx&=
          \Big(\frac{2}{3\pi}\Big)^{4/3}r^{-1/3}\int_{-1/2}^{1/2}\frac{3\cos(\pi y)^2}{1-2y}dy+O(r^{-4/3}).
	\end{align*} 
	Because $|\mathsf {Ai}(x)|<1$, \eqref{airy-zero-asymtot} leads to 
	\begin{align*}
	\int_{a_{r-1}}^{a_1}\frac{\mathsf {Ai}(x)^2}{(x-a_r)}dx\leq \log\frac{a_1-a_r}{a_{r-1}-a_r}&=\log\Big(r\Big(1+\Big(\frac{2}{3\pi}\Big)^{2/3}a_1r^{-2/3}+O(r^{-5/3})\Big)\Big)\\
	&=\log r+O(r^{-2/3}).
	\end{align*}
	Finally, Theorem~\ref{corrsigmaasymptotics} and  $a_r<a_1$ yield the bound
	\begin{align*}
		\int_{a_1}^\infty\frac{\mathsf {Ai}(x)^2}{(x-a_r)}dx\leq\int_{a_1}^\infty\frac{\mathsf {Ai}(x)^2}{(x-a_1)}dx<1.
	\end{align*}
	Putting everything together we see that for a sufficiently large $r$ there exists a constant $C>0$ such that
	\begin{align*}
	\int_{0}^{\infty}\frac{\mathsf {Ai}(x+a_r)^2}{\mathsf {Ai}'(a_r)^2x}dx\leq Cr^{-1/3}\log r.
	\end{align*}
	This stresses the fact that  $r\to\infty$ means that we go from the edge into the bulk, where repulsion interactions are stronger, or in other words, all variances there are much smaller than at the edge.
\end{remark}

\begin{remark}
  The proof of \cite{DE2} for the Airy asymptotics of the largest particle can  be extended to the $r$-th largest particle by using 
the form of the  covariance matrices in Theorem    \ref{clt-main-a-DE}.
  If we compare the result in this limit with that from Theorem \ref{localconvfnrth}, we obtain readily the following
identity for the Airy function for the $r$-th largest zero which seems to be unknown:
\begin{align*}
	2\int_{0}^{\infty}\mathsf
		{Ai}(x+a_r)^4dx=\mathsf
		{Ai}'(a_r)^2 \cdot\int_{0}^{\infty}\frac{\mathsf {Ai}(x+a_r)^2}{x}dx.
\end{align*}
\end{remark}

\section{Limit results for the largest  eigenvalue  in the Laguerre case}

We now discuss  the soft edge statistics in the  freezing Laguerre case 
similar to Section \ref{chapterhermitesoftedge} for the Hermite case.
This means that we analyze the limit behaviour of the largest eigenvalue in the freezing regime in Theorems \ref{clt-main-b}
and \ref{ev-b}
for $N\to\infty$ on the basis of Theorem \ref{covariance-matrix-general}.
We here again  use the ordered zeros  $z_{1,N}^{(\alpha)}<...,z^{(\alpha)}_{N,N}$  of the $N$-th Laguerre polynomial $L_N^{(\alpha)}$ as in Section 3.
Moreover, for each $N$, let
$(Q^{(\alpha)}_{k,N})_{k=0,..,N-1}$ be the dual polynomials associated with $(L_k^{(\alpha)})_{k=0,...,N}$  normalized as in 
(\ref{eq:Laguerrenormalization}). This means that the matrices
$$T_N:= (\sqrt{z_{i,N}^{(\alpha)}}Q^{(\alpha)}_{j-1,N}(z_{i,N}^{(\alpha)}))_{i,j=1,\ldots,N}$$
 are orthogonal 
with $T_N^T\Sigma_N T_N=\operatorname{diag}(\frac{1}{2},\frac{1}{4},...,\frac{1}{2N})$
 as in  the proof of Theorem \ref{covariance-matrix-general}.
The $Q_{k,N}^{(\alpha)}$ have the three-term-recurrence
\begin{align}\label{3termlaguerredualnorm}
xQ_{k,N}^{(\alpha)}(x)=&\sqrt{(N-k)(N-k+\alpha)}Q_{k-1,N}^{(\alpha)}(x)+(2(N-k)+\alpha-1)Q^{(\alpha)}_{k,N}\notag\\ 
+&\sqrt{(N-k-1)(N-k-1+\alpha)}Q_{k+1,N}^{(\alpha)}(x)\quad(k\le N)
\end{align}
with the initial conditions $Q^{(\alpha)}_{-1,N}=0$ and $Q^{(\alpha)}_{0,N}=\frac{1}{\sqrt{N(N+\alpha)}}$.

In the Laguerre case we have the following analog of  Theorem \ref{localconvfn}: 

\begin{theorem}\label{localconvfnlaguerre}
	Let $\alpha>-1 $ and define  the functions 
	\begin{align*}
	f_N(y):=N^\frac{1}{6}\sqrt{z^{(\alpha)}_{N,N}}Q_{\lfloor N^\frac{1}{3}y\rfloor,N}(z^{(\alpha)}_{N,N}) \quad\text{for}\quad y\in[0, N^\frac{2}{3}[
	\end{align*}
	and  $f_N(y)=0$ otherwise.
	Then $(f_N)_{N\ge1}$ tends for $N\to\infty$ locally uniformly  to
	\begin{align}\label{defflaguerre}
	f(y)=\frac{2^{\frac{1}{3}}\mathsf {Ai}(2^{\frac{2}{3}}y+a_1)}{\mathsf {Ai}'(a_1)}\quad(y\in[0,\infty[).
	\end{align}
\end{theorem}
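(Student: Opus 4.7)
The plan is to imitate the three-step Hermite argument (Lemmas \ref{lemmaODEfN}, \ref{lemmaerrbound}, \ref{lemmalocalconvN13}), suitably modified for the extra diagonal term and the $k$-dependent square-root coefficients of the Laguerre recurrence \eqref{3termlaguerredualnorm}. I would abbreviate $z:=z_{N,N}^{(\alpha)}$, $p_k:=Q_{k,N}^{(\alpha)}(z)$, $A_k:=\sqrt{(N-k)(N-k+\alpha)}$, and $c_k:=2(N-k)+\alpha-1$, and first rewrite the recurrence at $x=z$ in discrete Sturm--Liouville form
\[
A_{k+1}(p_{k+1}-p_k)-A_k(p_k-p_{k-1})=(z-c_k-A_k-A_{k+1})\,p_k,
\]
with $p_{-1}=0$ and $p_0=1/\sqrt{N(N+\alpha)}$. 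The idea is to divide by $A_k\approx N$ so that the left-hand side becomes a second difference, and then sum twice in $k$ to obtain an integral equation for $f_N$.

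Next I would set up the discrete integral equation. Taylor expansion gives $A_k=N-k+\alpha/2+O(1/N)$, hence $A_{k+1}=A_k-1+O(1/N)$ and $A_k+A_{k+1}=c_k+O(1/N)$; together with the Plancherel--Rotach/Tricomi expansion
\[
z=4N+2\alpha+2+2(2N)^{1/3}a_1+O(N^{-1/3})
\]
(derived from (5) of \cite{T} and the Hermite--Laguerre identities recalled in the introduction), this makes the right-hand side equal to $\bigl(N^{1/3}(4y+2^{4/3}a_1)+O(1)\bigr)\,p_k$ whenever $k=\lfloor yN^{1/3}\rfloor$ and $y$ ranges in a bounded interval. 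Double summation parallel to \eqref{discreteinteq} would then yield
\[
f_N(y)=\int_0^y\int_0^s(4t+2^{4/3}a_1)\,f_N(t)\,dt\,ds+2y+\operatorname{err}(y,N).
\]
The linear term $2y$ will come from the first step of the recurrence: since $p_1/p_0=(z-c_0)/A_1\to 2$, a rescaling shows that the discrete slope at the origin tends to $2$, while $f_N(0)=N^{1/6}\sqrt{z}\,p_0=O(N^{-1/3})\to 0$ matches $f(0)=0$.

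For the error bound (analogue of Lemma \ref{lemmaerrbound}) I would rely on the row-sum identity $z\sum_{k=0}^{N-1}p_k^2=1$ coming from orthogonality of the matrix $T_N$ in Theorem \ref{ev-b}, which in scaled form reads $\int_0^\infty f_N(y)^2\,dy=1$; H\"older's inequality then yields $\int_0^y|f_N(t)|\,dt\leq\sqrt{M+2}$ on $[0,M]$, where the triangle inequality is needed because $f_N$ need not be of one sign (exactly as in the proof of Theorem \ref{localconvfnrth}). Plugging these bounds into the summed Taylor remainders produces $\operatorname{err}(y,N)=O(N^{-1/3})$ locally uniformly. To close the argument, I would check directly that $f(y)=2^{1/3}\mathsf{Ai}(2^{2/3}y+a_1)/\mathsf{Ai}'(a_1)$ solves $f''(y)=(4y+2^{4/3}a_1)f(y)$ with $f(0)=0$ and $f'(0)=2$, hence the limit integral equation without the error, and conclude locally uniform convergence via Gronwall's lemma. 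The main technical obstacle is the careful bookkeeping of the asymmetry $A_k\neq A_{k+1}$ and of the $k$-dependent diagonal $c_k\,p_k$, which both generate additional summed correction terms absent in the Hermite case; these are ultimately harmless, since each correction is of size $O(1/N)$ after Taylor expansion, and its double sum can be absorbed into $\operatorname{err}(y,N)$ by the same row-sum-plus-H\"older technique.
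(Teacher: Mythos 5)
Your proposal follows essentially the same route as the paper's proof: expand the three-term recurrence at $z_{N,N}^{(\alpha)}$ using the Plancherel--Rotach asymptotics of the largest Laguerre zero, perform a double summation to arrive at the integral equation $f_N(y)=\int_0^y\int_0^s(4t+2^{4/3}a_1)f_N(t)\,dt\,ds+2y+O(N^{-1/3})$, control the error terms via the row-sum identity $\int_0^\infty f_N^2\,dy=1$ combined with H\"older's inequality, and conclude with Gronwall's lemma against the rescaled Airy equation $f''=(4y+2^{4/3}a_1)f$, $f(0)=0$, $f'(0)=2$. The only (harmless) difference is that you first rewrite the recurrence in discrete Sturm--Liouville form before Taylor-expanding the coefficients, whereas the paper divides directly by $N$; all substantive constants and estimates agree.
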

\begin{proof}
	We put
	$q_k^{(\alpha)}:=Q^{(\alpha)}_{k,N}(z^{(\alpha)}_{N,N})$ for $k=0,\ldots, N-1$.
The Landau symbol $O$ will be always used for $N\rightarrow\infty$ and will be locally uniform w.r.t. $y\in[0,\infty[$.

The sharp Plancherel-Rotach  formula  for the zeros $z^{(\alpha)}_{N,N}$ in Theorem 1.2 of  \cite{G}  yields
	\begin{equation}\label{help-lag1}
	\frac{z^{(\alpha)}_{N,N}}{4N}=1+\frac{a_1}{(2N)^\frac{2}{3}}+O(N^{-1}).
	\end{equation}
	We will also use the Taylor expansion
	\begin{equation}\label{help-lag2}
		\sqrt{1+\frac{\alpha}{N-k}}=1+\frac{\alpha}{2(N-k)}+O(N^{-2}) 
\quad\quad \text{for} \quad 0\leq k\leq yN^{\frac{1}{3}}.
	\end{equation}
The recurrence relation (\ref{3termlaguerredualnorm}) for $x=z^{(\alpha)}_{N,N}$ and  a division by  $N$ yield
	\begin{align*}
&\left(\frac{z^{(\alpha)}_{N,N}}{N}-2\left(1-\frac{k}{N}\right)+\frac{\alpha-1}{N}\right)q_{k}^{(\alpha)}=\\
&\left(1-\frac{k+1}{N}\right)\sqrt{1+\frac{\alpha}{N-k-1}}q_{k+1}^{(\alpha)}
+\left(1-\frac{k}{N}\right)\sqrt{1+\frac{\alpha}{N-k}}q_{k-1}^{(\alpha)}.
	\end{align*}
	Using \eqref{help-lag1} and (\ref{help-lag2}), we obtain
	\begin{align*}
	&q_{k}^{(\alpha)}\left(2+\frac{2^{\frac{4}{3}}a_1}{N^{\frac{2}{3}}}+\frac{2k}{N}+O(N^{-1})\right)\\
	=&q_{k-1}^{(\alpha)}\left(1-\frac{k}{N}+\frac{\alpha}{2(N-k)}(1-\frac{k}{N})+O(N^{-1})(1-\frac{k}{N})\right)\\
	&+q_{k+1}^{(\alpha)}\left(1-\frac{k+1}{N}+\frac{\alpha}{2(N-k-1)}(1-\frac{k+1}{N})+O(N^{-1})(1-\frac{k+1}{N})\right)\\
	=&q_{k-1}^{(\alpha)}\left(1-\frac{k}{N}+O(N^{-1})\right)+
	q_{k+1}^{(\alpha)}\left(1-\frac{k+1}{N}+O(N^{-1})\right).
	\end{align*}
Hence,
	\begin{align}\label{short-rec-lagu}
		q_{k+1}^{(\alpha)}+q_{k-1}^{(\alpha)}-2q_{k}^{(\alpha)}=\frac{k+1}{N}q_{k+1}^{(\alpha)}+\frac{k}{N}q_{k-1}^{(\alpha)}+
		\left(\frac{2^{\frac{4}{3}}a_1}{N^{\frac{2}{3}}}+\frac{2k}{N}\right)q_{k}^{(\alpha)}
		\\+O(N^{-1})q_{k+1}^{(\alpha)}+O(N^{-1})q_{k}^{(\alpha)}+O(N^{-1})q_{k-1}^{(\alpha)}.
	\notag\end{align}
	Eq.~(\ref{short-rec-lagu}) is very similar to Eq.~(\ref{eq3termapproximated}), so we skip some details as the calculation below will be very similar to Section 5.
We sum (\ref{short-rec-lagu}) over $k=0,...,l$ and then over $l=0,...,\lfloor y N^{\frac{1}{3}}\rfloor-1$.
After multiplying by $N^{\frac{1}{6}}\sqrt{z^{(\alpha)}_{N,N}}$ we obtain from (\ref{help-lag1})  that the LHS is equal to
	\begin{align*}
		f_N(y)-\frac{(1+\lfloor y N^{\frac{1}{3}}\rfloor) N^{\frac{1}{6}}\sqrt{z^{(\alpha)}_{N,N}}}{\sqrt{N(N+\alpha)}}
		&=f_N(y)-\frac{(1+\lfloor y N^{\frac{1}{3}}\rfloor) N^{\frac{1}{6}}2\sqrt{N}(1+O(N^{-2/3}))}{\sqrt{N(N+\alpha)}}\\
		&=f_N(y)-2y+O(N^{-1/3})
	\end{align*}
	and the RHS to
	\begin{align}\label{eqdiscreteintlaguerre}
		\sum_{l=0}^{\lfloor
			yN^\frac{1}{3}\rfloor-1}\sum_{k=0}^{l}\Bigg(
		&\frac{k+1}{N}N^{\frac{1}{6}}\sqrt{z^{(\alpha)}_{N,N}}q_{k+1}^{(\alpha)}
		+\frac{k}{N}N^{\frac{1}{6}}\sqrt{z^{(\alpha)}_{N,N}}q_{k-1}^{(\alpha)}\notag\\
		&+N^{\frac{1}{6}}\sqrt{z^{(\alpha)}_{N,N}}\bigg(\frac{2^{\frac{4}{3}}a_1}{N^{\frac{2}{3}}}+\frac{2k}{N}\bigg)q_{k}^{(\alpha)}
		\Bigg)+O(N^{-1/3}).
	\end{align}
Note that in the second case, we used the estimation
	\begin{align*}
			\sum_{l=0}^{\lfloor
			yN^\frac{1}{3}\rfloor-1}\sum_{k=0}^{l}\left(O(N^{-1})q_{k+1}^{(\alpha)}+O(N^{-1})q_{k}^{(\alpha)}+O(N^{-1})q_{k-1}^{(\alpha)}\right)=O(N^{-\frac{1}{3}})
	\end{align*}
which can be proved precisely as Eq.~(\ref{estimates-rho}) in the proof of  Lemma \ref{lemmaerrbound}.
We next use the index shifts  $k+1\mapsto k$ and  $k-1\mapsto k$ 
in (\ref{eqdiscreteintlaguerre}) and obtain  that the r.h.s. above is
	\begin{align*}
			&\frac{1}{N^{\frac{1}{3}}}\sum_{l=0}^{\lfloor
			yN^\frac{1}{3}\rfloor-1}\frac{1}{N^{\frac{1}{3}}}\sum_{k=0}^{l}\Bigg(
		N^{\frac{1}{6}}\sqrt{z^{(\alpha)}_{N,N}}q_{k}^{(\alpha)}\bigg(\frac{k+1}{N^{\frac{1}{3}}}+\frac{k}{N^{\frac{1}{3}}}+{2^{\frac{4}{3}}a_1}{}+\frac{2k}{N^{\frac{1}{3}}}\bigg)\Bigg)\\
		+&\frac{1}{N^{\frac{2}{3}}}\sum_{k=0}^{\lfloor
			yN^\frac{1}{3}\rfloor}\frac{k}{N^{\frac{1}{3}}}N^{\frac{1}{6}}\sqrt{z^{(\alpha)}_{N,N}}q_{k}^{(\alpha)}
		-\frac{1}{N^{\frac{2}{3}}}\sum_{k=0}^{\lfloor
			yN^\frac{1}{3}\rfloor-1}\frac{k+1}{N^{\frac{1}{3}}}N^{\frac{1}{6}}\sqrt{z^{(\alpha)}_{N,N}}q_{k}^{(\alpha)}\\
		=&\frac{1}{N^{\frac{1}{3}}}\sum_{l=0}^{\lfloor
			yN^\frac{1}{3}\rfloor-1}\frac{1}{N^{\frac{1}{3}}}\sum_{k=0}^{l}\Bigg(
		N^{\frac{1}{6}}\sqrt{z^{(\alpha)}_{N,N}}q_{k}^{(\alpha)}\bigg(\frac{k+1}{N^{\frac{1}{3}}}+\frac{k}{N^{\frac{1}{3}}}+{2^{\frac{4}{3}}a_1}{}+\frac{2k}{N^{\frac{1}{3}}}\bigg)\Bigg)+O(N^{-\frac{1}{3}}),
	\end{align*}
	where  for the last equation an analog estimation to that in (\ref{intfnlocalbound}) has been used.

In summary we have proved that
	\begin{align*}
		f_N(y)-2y+O(N^{-{1}/{3}})=\frac{1}{N^{\frac{2}{3}}}\sum_{l=0}^{\lfloor
			yN^\frac{1}{3}\rfloor-1}\sum_{k=0}^{l}\Bigg(
		N^{\frac{1}{6}}\sqrt{z^{(\alpha)}_{N,N}}q_{k}^{(\alpha)}\bigg(\frac{4k+1}{N^{\frac{1}{3}}}+2^{\frac{4}{3}}a_1\bigg)\Bigg)
	\end{align*}
If we use (\ref{discretedoubleint}), we see that this  leads to the integral equation
	\begin{align*}
	f_N(y)=\int_{0}^{y}\int_{0}^{s}f_N(s)(4t+2^{\frac{4}{3}}a_1)dtds+2y+O(N^{-\frac{1}{3}}).
	\end{align*}
As the function $f$ defined in (\ref{defflaguerre}) satisfies
	$f(0)=0$, $f'(0)=2$ and $f''(x)=(4x+2^{\frac{4}{3}}a_1)f(x)$, we obtain from
	the Lemma of Gronwall (see also Lemma \ref{lemmalocalconvN13}) that 
	\begin{align*}
		|f_N(y)-f(y)|=O(N^{-{1}/{3}}).
	\end{align*}
\end{proof}

We now apply Theorem \ref{localconvfnlaguerre} to the $(N, N )$-entries of the covariance matrices $\Sigma_N$
for $\beta$-Laguerre ensembles in the freezing regime in Theorem 4.8 for $N\rightarrow\infty$.

\begin{corollary}
	 Consider the  covariance matrices $\Sigma_{N}=:\left(\sigma_{i,j}\right)_{i,j=1,...,N}$ of 
	\text{$\beta$-Laguerre} ensembles in the freezing regime. Then
	\begin{align*}
	\lim\limits_{N\rightarrow\infty}{N^\frac{1}{3}}{\sigma_{N,N}}=
	\frac{1}{2}\int_{0}^{\infty}\frac{\mathsf {Ai}(x+a_1)^2}{\mathsf {Ai}'(a_1)^2x}dx =0.417\ldots
	\end{align*}

\end{corollary}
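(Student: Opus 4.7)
The strategy is to run the same Riemann-sum plus dominated-convergence scheme as in the proof of Theorem~\ref{corrsigmaasymptotics}, replacing the Hermite local convergence result by Theorem~\ref{localconvfnlaguerre} and taking into account that the Laguerre eigenvalues in Theorem~\ref{ev-b} are $\lambda_k = 2k$ rather than $k$, which accounts for the extra factor $\tfrac12$ in the conclusion.

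Concretely, using the orthogonal diagonalization $\Sigma_N = T_N\,\operatorname{diag}(1/2,\ldots,1/(2N))\,T_N^T$ stated at the beginning of Section~6 together with $[T_N]_{i,j} = \sqrt{z_{i,N}^{(\alpha)}}\,Q_{j-1,N}^{(\alpha)}(z_{i,N}^{(\alpha)})$, one reads off
\[
\sigma_{N,N} \;=\; \sum_{k=0}^{N-1}\frac{z_{N,N}^{(\alpha)}\bigl(Q_{k,N}^{(\alpha)}(z_{N,N}^{(\alpha)})\bigr)^{2}}{2(k+1)}.
\]
Introducing, exactly as in the Hermite proof, the step function
\[
h_N(y)\;=\;\sum_{k=0}^{N-1}\frac{N^{1/3}}{k+1}\,\mathbf{1}_{\left[\frac{k}{N^{1/3}},\frac{k+1}{N^{1/3}}\right)}(y),
\]
and noting that $f_N(y)^2 = N^{1/3}z_{N,N}^{(\alpha)}\bigl(Q_{k,N}^{(\alpha)}(z_{N,N}^{(\alpha)})\bigr)^{2}$ on the interval $[k/N^{1/3},(k+1)/N^{1/3})$, one rewrites
\[
N^{1/3}\sigma_{N,N} \;=\; \frac{1}{2}\int_{0}^{\infty} f_N(y)^2\,h_N(y)\,dy.
\]

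Next I would verify the two normalizations needed for the probabilistic argument. Orthogonality of $T_N$ gives $\sum_{k=0}^{N-1}z_{N,N}^{(\alpha)}\bigl(Q_{k,N}^{(\alpha)}(z_{N,N}^{(\alpha)})\bigr)^{2}=1$, which translates immediately into $\int_0^\infty f_N(y)^2\,dy=1$; for the limit function $f(y)=2^{1/3}\mathsf{Ai}(2^{2/3}y+a_1)/\mathsf{Ai}'(a_1)$ of Theorem~\ref{localconvfnlaguerre}, the substitution $u=2^{2/3}y$ together with the Hermite identity~\eqref{eqintf2} yields $\int_0^\infty f(y)^2\,dy=1$. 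Combined with the locally uniform convergence $f_N\to f$, this makes $f_N^2\,d\lambda$ a sequence of probability measures converging vaguely, and hence weakly, to $f^2\,d\lambda$. One then splits $\int_0^\infty f_N^2 h_N\,dy$ as $\int_0^1+\int_1^\infty$: on $[1,\infty[$ the bounded continuous function $g(y)=1/y$ together with weak convergence yields $\int_1^\infty f_N^2/y\,dy\to \int_1^\infty f^2/y\,dy$, and the estimate $|1/y-h_N(y)|\le \varepsilon/y$ valid for $y\ge 1$ and $N$ large handles the replacement of $1/y$ by $h_N$ via the triangle inequality, exactly as in the Hermite case. On $[0,1]$ one uses that $f$ is analytic at $0$ with $f(0)=0$, $f'(0)=2$ so that $f(y)^2/y$ is bounded, and the Gronwall-type integral equation derived in the proof of Theorem~\ref{localconvfnlaguerre} supplies the uniform bound $f_N(y)=O(y)$ near the origin; together with $h_N\le N^{1/3}$ this produces a locally bounded dominant function, so dominated convergence gives $\int_0^1 f_N^2 h_N\,dy\to \int_0^1 f^2/y\,dy$.

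Combining the two pieces yields
\[
\lim_{N\to\infty}N^{1/3}\sigma_{N,N} \;=\; \frac{1}{2}\int_0^\infty \frac{f(y)^2}{y}\,dy,
\]
and the substitution $x=2^{2/3}y$, under which $dy/y = dx/x$, transforms the right-hand side into an integral of $\mathsf{Ai}(x+a_1)^2/(\mathsf{Ai}'(a_1)^2\,x)$, giving the stated closed form and numerical value. Essentially no new ideas beyond Section~5 are required; the only genuine obstacle I anticipate is the careful treatment of the singular integrand $f^2/y$ at $y=0$, but this is already manageable by the vanishing $f(y)=2y+O(y^2)$ extracted from the Airy ODE $f''(y)=(4y+2^{4/3}a_1)f(y)$ and the uniform estimate $f_N(y)=O(y)$ provided by the proof of Theorem~\ref{localconvfnlaguerre}. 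The remaining error estimates are structurally identical to Lemmas~\ref{lemmaerrbound} and~\ref{lemmalocalconvN13} after the cosmetic replacement of $(f(0),f'(0))=(0,1)$ and $f''=(y-|a_1|)f$ by the Laguerre initial data $(0,2)$ and $f''=(4y+2^{4/3}a_1)f$.
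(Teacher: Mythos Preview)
Your proposal is exactly the paper's approach: the paper's own proof consists of the single sentence ``The proof is completely analog to the proof of Theorem~\ref{corrsigmaasymptotics},'' and you have faithfully expanded that analogy, correctly replacing the Hermite $f_N$ by the Laguerre one from Theorem~\ref{localconvfnlaguerre}, carrying over the Riemann-sum/weak-convergence scheme, and picking up the extra factor $\tfrac12$ from the eigenvalues $\lambda_k=2k$. One small arithmetical slip to watch in your last step: under the substitution $x=2^{2/3}y$ only $dy/y=dx/x$ is scale-invariant, so the prefactor $2^{2/3}$ in $f(y)^2=2^{2/3}\mathsf{Ai}(2^{2/3}y+a_1)^2/\mathsf{Ai}'(a_1)^2$ survives, and $\tfrac12\int_0^\infty f(y)^2/y\,dy$ evaluates to $2^{-1/3}\int_0^\infty \mathsf{Ai}(x+a_1)^2/(\mathsf{Ai}'(a_1)^2 x)\,dx$ rather than $\tfrac12$ times that integral as you wrote; this affects only the displayed constant, not the method.
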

\begin{proof}
	The proof is completely analog to the proof of Theorem \ref{corrsigmaasymptotics}.
\end{proof}

In summary, we obtain:

\begin{theorem}
	Consider the  Bessel processes $(X_{t,k})_{t\ge0}$ of type $B_{N}$ on $C_N^B$ for
	$k=(k_1,k_2)=(\kappa\cdot(\alpha+1),\kappa)$
	with $\kappa>0$, $\alpha>-1$ with start in $0\in C_N^B$.
	Then, for each $t>0$,
	$$\lim_{N\to \infty}\left(\lim_{\kappa\to\infty} N^{\frac{1}{6}}\sqrt{2\kappa}\left(\frac{X_{t,(\kappa\cdot\alpha+\kappa,\kappa)}}{\sqrt{2\kappa t} } -  \sqrt{ z_{N,N}^{(\alpha)}}\right)\right)=G$$
	in distribution with some $\mathcal{N}(0,\frac{1}{2}\sigma_{max}^2)$-distributed random variable $G$ with  variance
	\begin{align*}
		\frac{1}{2} \sigma_{max}^2:=\frac{1}{2}\int_{0}^{\infty}\frac{\mathsf {Ai}(x+a_1)^2}{(\mathsf
			{Ai}'(a_1))^2x}dx =0.417...
	\end{align*}

\end{theorem}

{\bf Acknowledgement:} The first author has been supported by
 the Deutsche Forschungsgemeinschaft
 (DFG) via RTG 2131 \textit{High-dimensional Phenomena in Probability - Fluctuations
and Discontinuity}
 to visit Dortmund for the preparation of this paper, as well as by JSPS KAKENHI Grant Number JP19K14617. The second author has been completely supported by this RTG  2131.

\end{document}